\numberwithin{equation}{section}
\theoremstyle{plain}
\newtheorem{thm}[equation]{Theorem}
\newtheorem{lemma}[equation]{Lemma}
\newtheorem{cor}[equation]{Corollary}
\newtheorem{prop}[equation]{Proposition}
\theoremstyle{definition}
\newtheorem{dfn}[equation]{Definition}
\theoremstyle{remark}
\newtheorem{rmk}[equation]{Remark}
\newcommand{\circleland}{ 
	\mathbin{
		\mathchoice
		{\buildcircleland{\displaystyle}}
		{\buildcircleland{\textstyle}}
		{\buildcircleland{\scriptstyle}}
		{\buildcircleland{\scriptscriptstyle}}
	} 
}
\newcommand\buildcircleland[1]{%
	\begin{tikzpicture}[baseline=(X.base), inner sep=0, outer sep=0]
	\node[draw,circle] (X)  {$#1\land$};
	\end{tikzpicture}%
}
\title{On the rigidity of harmonic-Ricci solitons}
\author{Andrea Anselli\thanks{\url{andrea.anselli@unimi.it}}\\ Dipartimento di Matematica \textquotedblleft Federigo Enriques\textquotedblright, Universit\'{a} degli Studi di Milano,\\
	Via Cesare Saldini 50, 20133 Milano, Italia.\thanks{No longer affiliated}}
\date{}
\begin{document}
	
	\maketitle
	
	\begin{abstract}
		In this paper we introduce the notion of rigidity for harmonic-Ricci solitons and we provide some characterizations of rigidity, generalizing some known results for Ricci solitons. In the compact case we are able to deal with not necessarily gradient solitons while, in the complete non-compact case, we restrict our attention to steady and shrinking gradient solitons. We show that the rigidity can be traced back to the vanishing of certain modified curvature tensors that take into account the geometry a Riemannian manifold equipped with a smooth map $\varphi$, called $\varphi$-curvatures, which are a natural generalization of the standard curvature tensors in the setting of harmonic-Ricci solitons.
	\end{abstract}
	
	\tableofcontents
	
	\section{Introduction}
	
	Ricci solitons, i.e., self-similar solutions of the Ricci flow, had been intensively studied since the pioneering work of R. S. Hamilton \cite{H}. They have been subject of the studies of an incredible number of scientists: we recommend, for instance, the survey paper \cite{C} or Chapter 8 of \cite{AMR} and all the references therein. More recently, B. List and R. M\"{u}ller, see \cite{L} and \cite{M}, respectively, combined the Ricci flow for a metric with the heat flow for a map obtaining the so called harmonic-Ricci flow. Self-similar solutions of the harmonic-Ricci flow are called harmonic-Ricci solitons and quite recently started attracting some attention; see, for instance, \cite{W}, \cite{YS} and \cite{YZ}.
	
	Recall that a Riemannian manifold $(M,g)$ is called harmonic-Ricci soliton (with respect to the positive constant $\alpha$, the smooth map $\varphi:M\to N$, where the target $(N,\langle\,,\,\rangle_N)$ is a Riemannian manifold, the vector field $X$ and $\lambda\in \mathbb{R}$) if the following hold
	\begin{equation}\label{harm ricci soliton con campo vettoriale}
	\begin{dcases}
	\mbox{Ric}-\alpha \varphi^*\langle\,,\,\rangle_N+\frac{1}{2}\mathcal{L}_Xg=\lambda g\\
	\tau(\varphi)=d\varphi(X),
	\end{dcases}
	\end{equation}
	where $\tau(\varphi)$ denotes the tension field of $\varphi$ and $\mathcal{L}_Xg$ the Lie derivative of the metric $g$ in the direction of $X$. The harmonic-Ricci soliton is called shrinking, steady or expanding if, respectively, $\lambda>0$, $\lambda=0$ or $\lambda<0$, and it is called gradient if the vector field can be replaced with the gradient of a smooth function $f$ on $M$, called potential function. For gradient harmonic-Ricci solitons \eqref{harm ricci soliton con campo vettoriale} reads
	\begin{equation*}
	\begin{dcases}
	\mbox{Ric}-\alpha \varphi^*\langle\,,\,\rangle_N+\mbox{Hess}(f)=\lambda g\\
	\tau(\varphi)=d\varphi(\nabla f).
	\end{dcases}
	\end{equation*}
	
	While not much is yet known about harmonic-Ricci solitons, many interesting classification results are available for Ricci solitons. A key point in the study of Ricci solitons is the understanding of the central role of Einstein metrics. Indeed, any Einstein manifold endowed with a Killing vector field give rise to a trivial Ricci soliton. Starting from Einstein manifolds it is possible to build more general and less trivial examples of gradient Ricci solitons, the so called rigid gradient Ricci solitons introduced by P. Petersen and W. Wylie in \cite{PW}. A gradient Ricci soliton is said to be rigid if it is isometric to a quotient of the Riemannian product $L\times \mathbb{R}^k$, where $L$ is an Einstein manifold and $f(x)=\frac{\lambda}{2}|x|^2$ on the Euclidean factor. 
	
	Actually, in the compact case nothing changes: a compact gradient Ricci soliton is rigid if and only if it is Einstein and the potential function is constant and, moreover, rigidity is equivalent to the constancy of the scalar curvature, see for instance the work \cite{ELM} of M. Eminenti, G. La Nave and C. Mantegazza. In the complete non-compact case we have an analogous situation for steady gradient Ricci solitons, since they are rigid precisely when the scalar curvature is constant, see Proposition 3.2 of \cite{PW}. The constancy of the scalar curvature is not sufficient to characterize rigidity for complete non-steady gradient Ricci solitons, one needs also that the soliton is radially flat, i.e, that $R(\cdot,\nabla f)\nabla f=0$, as showed in Theorem 1.2 of \cite{PW}. Actually A. Naber, in Theorem 2 of \cite{N}, was the first one who shows the rigidity of a certain class of gradient Ricci solitons.
	
	Later on M. Fernández-López and E. García-Río, in \cite{FG}, and O. Munteanu and N. Sesum, in \cite{MS}, showed that rigidity for complete gradient Ricci solitons is equivalent to having harmonic Weyl curvature, relying on the results of \cite{PW} mentioned above. In Theorem 2.1 of \cite{FG} the authors showed that a compact Ricci soliton, not necessarily gradient, is Einstein, i.e., is rigid, if and only if it has harmonic-Weyl tensor.
	
	The results of \cite{FG} and \cite{MS} reveals fundamental in the work \cite{CC} of H. D. Cao and Q. Chen, where the authors studied complete Bach-flat gradient Ricci soliton obtaining, among other things, their rigidity (see Theorem 1.1 and Theorem 1.2). By an accurate analysis of the geometry of the regular level sets of the potential function they obtained that the vanishing of the Bach tensor implies that the Weyl tensor is harmonic, hence they reduce the proof to the results of \cite{FG} and \cite{MS}.
	
	The first aim of this article is to introduce the notion of rigidity for gradient harmonic-Ricci soliton. The key role played by Einstein manifolds for Ricci solitons, in the setting of harmonic-Ricci solitons is occupied by harmonic-Einstein manifolds. Recall that a Riemannian manifold $(M,g)$ of dimension $m\geq 2$ is called harmonic-Einstein (with respect to the positive constant $\alpha$, the smooth map $\varphi:M\to N$, where the target $(N,\langle\,,\,\rangle_N)$ is a Riemannian manifold and $\lambda\in \mathbb{R}$) if the following hold
	\begin{equation*}
	\begin{dcases}
	\mbox{Ric}-\alpha \varphi^*\langle\,,\,\rangle_N=\lambda g\\
	\tau(\varphi)=0.
	\end{dcases}
	\end{equation*}
	The author, in \cite{ACR} and \cite{A}, studied the geometric properties of some curvature tensors on a Riemannian manifold that take in account the geometry of a smooth manifold equipped with a smooth map $\varphi$, the so called $\varphi$-curvatures. It is important to observe that for a harmonic-Einstein manifold the $\varphi$-scalar curvature $S^{\varphi}:=S-\alpha|d\varphi|^2$, where $|d\varphi|^2$ is the Hilbert-Schimdt norm of the differential of $\varphi$, being the trace of the $\varphi$-Ricci tensor $\mbox{Ric}^{\varphi}:=\mbox{Ric}-\alpha \varphi^*\langle\,,\,\rangle_N$, is constant on the whole $M$. Starting from the $\varphi$-Ricci tensor one may define, among the other $\varphi$-curvatures, the $\varphi$-Cotton, the $\varphi$-Weyl and the $\varphi$-Bach tensors, see \hyperref[section phi curv]{Section \ref*{section phi curv}} for the details.
	
	We say that a gradient harmonic-Ricci soliton with respect to $\alpha$, $\varphi:M\to N$ smooth and $\lambda$ and with potential function $f$ is rigid if it is isometric to a quotient of the Riemannian product $L\times \mathbb{R}^k$, where $L$ is a harmonic-Einstein manifold with respect to $\alpha$, $\psi:M\to N$ smooth and $\lambda$, the potential $f=\frac{\lambda}{2}|x|^2$ on the Euclidean factor and the map $\varphi=\psi$ on the harmonic-Einstein factor, see \hyperref[def rigidita]{Definition \ref*{def rigidita}} below for a more precise formulation.
	
	A compact harmonic-Ricci soliton (not necessarily gradient) is called rigid if it is harmonic-Einstein. We will see that for compact harmonic-Ricci soliton rigidity is equivalent to having constant $\varphi$-scalar curvature. The main result for compact solitons is \hyperref[thm phi cotton flat compact ricci solitons are rigid]{Theorem \ref*{thm phi cotton flat compact ricci solitons are rigid}} that is a generalization, at least for dimension $m\geq 3$, of Theorem 2.1 of \cite{FG}. At first one may think that the natural generalization of the hypothesis of having harmonic-Weyl tensor is to have harmonic $\varphi$-Weyl tensor, but this is not true. For manifolds of dimension $m\geq 4$ having harmonic Weyl tensor is equivalent to the vanishing of the Cotton tensor. What is true then is that, as stated in \hyperref[thm phi cotton flat compact ricci solitons are rigid]{Theorem \ref*{thm phi cotton flat compact ricci solitons are rigid}}, $\varphi$-Cotton flat compact harmonic-Ricci soliton of dimension $m\geq 3$ are rigid.
	
	Moving on to the complete case, in \hyperref[prop caratt caso steady]{Proposition \ref*{prop caratt caso steady}} we show that the constancy of the $\varphi$-scalar curvature is necessary and sufficient for the rigidity of complete gradient steady Ricci-harmonic soliton, extending Proposition 3.2 of \cite{PW} in our setting. For complete non-steady gradient harmonic-Ricci solitons we characterize rigidity via the condition of having parallel $\varphi$-Ricci tensor, see \hyperref[thm rigidity]{Theorem \ref*{thm rigidity}}. This result is similar to Theorem 1.2 of \cite{PW}, although our hypothesis is slightly stronger, see \hyperref[rmk ipotesi petersen wylei]{Remark \ref*{rmk ipotesi petersen wylei}}. Nevertheless, relying on \hyperref[thm rigidity]{Theorem \ref*{thm rigidity}}, we are able to extend the results \cite{FG} and \cite{MS} regarding complete non-compact shrinking soliton. Indeed, in \hyperref[thm caso non compatto con phi cotton nullo]{Theorem \ref*{thm caso non compatto con phi cotton nullo}}, we show that a complete non-compact gradient shrinking harmonic-Ricci soliton with vanishing $\varphi$-Cotton tensor satisfies $\nabla \mbox{Ric}^{\varphi}=0$ and then, in \hyperref[cor se c phi zero allora rigidita]{Corollary \ref*{cor se c phi zero allora rigidita}}, we characterize the rigidity of complete non-compact gradient shrinking harmonic-Ricci soliton via the vanishing of the $\varphi$-Cotton tensor.
	
	Our final aim is the generalization of the results of \cite{CC} mentioned above. What we obtained is that the vanishing of the $\varphi$-Bach tensor characterize rigidity for complete gradient shrinking harmonic-Ricci solitons (actually, in dimension $m\neq  4$, we require the vanishing of the totally traceless part of the $\varphi$-Bach tensor, see \hyperref[rmk totally traceless part]{Remark \ref*{rmk totally traceless part}} for further comments on this assumption). Recall that Bach flat metrics for compact four dimensional Riemannian manifold are critical points of the conformal invariant functional
	\begin{equation}\label{funct int di norma di W}
		g\mapsto \int_M|W_g|^2\mu_g,
	\end{equation}
	where $g$ is a Riemannian metric, $W_g$ is the Weyl tensor and $\mu_g$ is the Riemannian volume element. The study of Bach flat metrics is interesting in view of their role in General Relativity, see \cite{B}. In \cite{A20} the author studied critical points of the conformal invariant functional $\mathcal{S}_2$, that is a more general version of the functional \eqref{funct int di norma di W} that does not depend only on a Riemannian metric $g$ but also on a smooth map $\varphi$, on a compact four dimensional manifold. Its critical points $(g,\varphi)$ are characterized by the validity of
	\begin{equation}\label{pair equation intro}
		B^{\varphi}=0, \quad J=0,
	\end{equation}
	where $B^{\varphi}$ is the $\varphi$-Bach tensor and $J$ is the section of the pullback bundle $\varphi^{-1}TN$ defined in \eqref{def di J} below. The main interest of \eqref{pair equation intro} resides in their applications in General relativity, as pointed out in \cite{A20}. See \hyperref[rmk bach flat gen rel]{Remark \ref*{rmk bach flat gen rel}} for more details and the definition of $\mathcal{S}_2$.
	
	The study of $\varphi$-Bach flat harmonic-Ricci solitons is a first attempt to find non-trivial examples (i.e, not harmonic-Einstein) of critical points of $\mathcal{S}_2$ and it has been the main motivation of our study. As mentioned above our main result says, essentially, that $\varphi$-Bach flat complete gradient shrinking harmonic-Ricci soliton are rigid: as usual in the compact case we are able to deal with harmonic-Ricci solitons that are not necessarily gradient, see \hyperref[cor bach flat harm ricci soliton sono harm einst]{Corollary \ref*{cor bach flat harm ricci soliton sono harm einst}}, while in the complete non-compact case we restricted our attention to gradient shrinking harmonic-Ricci solitons, see \hyperref[cor bach non comp flat harm ricci soliton sono harm einst]{Corollary \ref*{cor bach non comp flat harm ricci soliton sono harm einst}}. It is interesting that the validity of the equation $J=0$ follows from $B^{\varphi}=0$ for the harmonic-Ricci solitons in consideration, as pointed out at the end of \hyperref[rmk bach flat gen rel]{Remark \ref*{rmk bach flat gen rel}}.
	
	The techniques used to prove our results are very close to one used by the authors of the articles we mentioned, with the difference that one have to take care to the geometry of the smooth map $\varphi$ during all the process. This is not particularly difficult once one understands the relevant role played by the $\varphi$-curvatures. Although we were able to generalize in the context of harmonic-Ricci solitons some of the results of \cite{PW}, \cite{FG}, \cite{MS} and \cite{CC} we are far from the more satisfactory description of rigidity of Ricci solitons. What is missing, up to now, is a more detailed study in the lower dimensional case $m=2,3$ (that will be object a future study) and the detection of a condition comparable notion to the one of locally conformally flatness for Ricci solitons.
	
	The paper is organized as follows. In \hyperref[section notation prel]{Section \ref*{section notation prel}} we fix the notations and we collect the necessary preliminaries on $\varphi$-curvatures, harmonic-Einstein manifolds and harmonic-Ricci solitons, providing the adequate references where one may find the proofs. In \hyperref[sect calc]{Section \ref*{sect calc}} we obtain some general formulas that will be used in the next Sections. \hyperref[section 4]{Section \ref*{section 4}} marks the beginning of the core of the paper: it is dedicated to the definition of rigidity for harmonic-Ricci solitons and the study of their property. The characterization of rigidity via the vanishing of the $\varphi$-Cotton tensor is the aim of \hyperref[section solitoni compatti]{Section \ref*{section solitoni compatti}} and \hyperref[section noncompac shirnking]{Section \ref*{section noncompac shirnking}}. In \hyperref[section solitoni compatti]{Section \ref*{section solitoni compatti}} we deal with not necessarily gradient compact solitons while in \hyperref[section noncompac shirnking]{Section \ref*{section noncompac shirnking}} we deal with complete non-compact shrinking solitons. In the final Section, \hyperref[section bach flat]{Section \ref*{section bach flat}}, we characterize rigidity via vanishing condition on the $\varphi$-Bach tensor.
	
	\section{Notations and preliminaries}\label{section notation prel}
	
	All the manifolds in this paper are assumed to be smooth, connected and without boundary.
	
	Let $M$ be a smooth manifold and $g$ be a Riemannian metric on $M$, we denote by $\nabla$ the Levi-Civita connection of $(M,g)$. For the Riemann tensor $\mbox{Riem}$ of $(M,g)$ we use the sign conventions
	\begin{equation*}
	R(X,Y)Z=\nabla_X(\nabla_YZ)-\nabla_Y(\nabla_XZ)-\nabla_{[X,Y]}Z \quad \mbox{ for every } X,Y,Z\in\mathfrak{X}(M),
	\end{equation*}
	where $\mathfrak{X}(M)$ is the $\mathcal{C}^{\infty}(M)$-module of vector fields on $M$ and $[\,,\,]$ denotes the Lie bracket, and
	\begin{equation*}
	\mbox{Riem}(W,Z,X,Y)=g(R(X,Y)Z,W) \quad \mbox{ for every } X,Y,Z,W\in\mathfrak{X}(M).
	\end{equation*}
	
	Let $(N,\langle\,,\,\rangle_N)$ be a Riemannian manifold of dimension $n$ and $\varphi:M\to N$ a smooth map.
	
	All the computation will be carry on using the moving frame formalism introduced by E. Cartan and we refer to Chapter 1 of \cite{AMR}. We fix the indexes ranges
	\begin{equation*}
	1\leq i,j,k,t,\ldots \leq m, \quad 1\leq a,b,c,d\ldots \leq n
	\end{equation*}
	and from now on we adopt the Einstein summation convention over repeated indexes. In a neighborhood of each point of $M$ we can write
	\begin{equation*}
	g=\delta_{ij}\theta^i\otimes \theta^j=\theta^i\otimes \theta^i
	\end{equation*}
	where $\delta_{ij}$ is the Kronecker delta and $\{\theta^i\}$ is a local orthonormal coframe. The dual frame will be denoted $\{e_i\}$ and is an orthonormal frame with respect to $g$.
	
	The Levi-Civita connection forms $\{\theta^i_j\}$ are characterized by the skew-symmetry $\theta^i_j+\theta^j_i=0$ and the validity of the first structure equations $d\theta^i+\theta^i_j\wedge \theta^j=0$ and the curvature forms $\{\Theta^i_j\}$ are given by $\Theta^i_j=\frac{1}{2}R^i_{jkt}\theta^k\wedge \theta^t$, where $R_{ijkt}$ are the components of the $(0,4)$-version of the Riemann tensor $\mbox{Riem}$ of $(M,g)$,
	\begin{equation*}
	\mbox{Riem}=R_{ijkt}\theta^i\otimes \theta^j\otimes \theta^k\otimes \theta^t,
	\end{equation*}
	and they satisfy the second structure equations $\Theta^j_i=d\theta^j_i+\theta^k_i\wedge\theta_k^j$.
	
	The Riemann tensor present the symmetries $R_{ijkt}+R_{ijtk}=0$, $R_{ijkt}+R_{jikt}=0$ and $R_{ijkt}=R_{ktij}$. Moreover it satisfy the two Bianchi identities
	\begin{equation*}
	R_{ijkt}+R_{iktj}+R_{itjk}=0
	\end{equation*}
	and
	\begin{equation}\label{second Bianchi identity}
	R_{ijkt,l}+R_{ijtl,k}+R_{ijlk,t}=0,
	\end{equation}
	where, for an arbitrary tensor field $T$ of type $(r,s)$
	\begin{equation*}
	T=T_{j_1\ldots j_s}^{i_1\ldots i_r}\theta^{j_1}\otimes \ldots \otimes \theta^{j_s}\otimes e_{i_1}\otimes \ldots \otimes e_{i_r},
	\end{equation*}
	its covariant derivative is defined as the tensor field of type $(r,s+1)$
	\begin{equation*}
	\nabla T=T_{j_1\ldots j_s,k}^{i_1\ldots i_r}\theta^k\otimes \theta^{j_1}\otimes \ldots \otimes \theta^{j_s}\otimes e_{i_1}\otimes \ldots \otimes e_{i_r},
	\end{equation*}
	where its components satisfies
	\begin{equation*}
	T_{j_1\ldots j_s,k}^{i_1\ldots i_r}\theta^k=dT_{j_1\ldots j_s}^{i_1\ldots i_r}-\sum_{t=1}^sT_{j_1\ldots j_{t-1} h j_{t+1}\ldots  j_s}^{i_1\ldots i_r}\theta^{h}_{j_t}+\sum_{t=1}^rT_{j_1\ldots j_s}^{i_1\ldots i_{t-1} h i_{t+1} \ldots i_r}\theta^{i_t}_h.
	\end{equation*}
	
	Later on we will make use of the following commutation relation 
	\begin{equation*}
	T_{j_1\ldots j_s,kt}^{i_1\ldots i_r}=T_{j_1\ldots j_s,tk}^{i_1\ldots i_r}+\sum_{t=1}^sR^h_{j_tkt}T_{j_1\ldots j_{t-1} hj_{t+1}\ldots j_s}^{i_1\ldots i_r}-\sum_{t=1}^rR^{i_t}_{hkt}T_{j_1\ldots j_s}^{i_1\ldots i_{t-1} h i_{t+1}\ldots i_r},
	\end{equation*}
	that when $T$ is given by the gradient of a smooth function $f$ yields
	\begin{equation}\label{comm rule der terza funzione}
	f_{ijk}=f_{ikj}+R^t_{ijk}f_t,
	\end{equation}
	and when $T$ is a two times covariant tensor field reads
	\begin{equation}\label{comm rule two times cov tensor field}
	T_{ij,kt}=T_{ij,tk}+R^l_{ikt}T_{lj}+R^l_{jkt}T_{il}.
	\end{equation}
	
	The the trace of the Riemann tensor is called Ricci tensor $\mbox{Ric}$ of $(M,g)$ and, in a local orthonormal coframe $\{\theta^i\}$, is given by
	\begin{equation}\label{ricci tensor trace riem}
	\mbox{Ric}=R_{ij}\theta^i\otimes \theta^j, \quad R_{ij}=R_{kikj}.
	\end{equation}
	The scalar curvature $S$ of $(M,g)$ is defined as the trace of the Ricci tensor and it is locally given by $S=\eta^{ij}R_{ij}$. Finally, the Riemannian volume element of $(M,g)$ is locally given by $\mu=\theta^1\land \ldots \land \theta^m$.
	
	Let $\{E_a\}$, $\{\omega^a\}$, $\{\omega^a_b\}$, $\{\Omega^a_b\}$ be an orthonormal frame, coframe, the respectively Levi-Civita connection forms and curvature forms on an open subset $\mathcal{V}$ on $N$ such that $\varphi^{-1}(\mathcal{V})\subseteq \mathcal{U}$. We set
	\begin{equation*}
	\varphi^*\omega^a=\varphi^a_i\theta^i
	\end{equation*}
	so that the differential $d\varphi$ of $\varphi$, a $1$-form on $M$ with values in the pullback bundle $\varphi^{-1}TN$, can be written as
	\begin{equation*}
	d\varphi=\varphi^a_i\theta^i\otimes E_a.
	\end{equation*}
	The generalized second fundamental tensor of the map $\varphi$ is given by $\nabla d\varphi$, locally
	\begin{equation*}
	\nabla d\varphi=\varphi^a_{ij}\theta^j\otimes \theta^i\otimes E_a,
	\end{equation*}
	where its coefficient are defined according to the rule
	\begin{equation*}
	\varphi^a_{ij}\theta^j=d\varphi^a_i-\varphi^a_k\theta^k_i+\varphi^b_i\omega^a_b.
	\end{equation*}
	The tension field $\tau(\varphi)$ of the map $\varphi$ is the section of $\varphi^{-1}TN$ given by
	\begin{equation}\label{def tension field}
	\tau(\varphi)=\mbox{tr}(\nabla d\varphi)=\varphi^a_{ii}E_a.
	\end{equation}
	Finally the bi-tension field $\tau_2(\varphi)$ of the map $\varphi$ is the section of $\varphi^{-1}TN$ with components
	\begin{equation}\label{def bi tension}
	\tau_2(\varphi)^a=\varphi^a_{iijj}-{}^NR^a_{bcd}\varphi^b_i\varphi^c_i\varphi^d_{jj},
	\end{equation}
	where ${}^NR^a_{bcd}$ are the components of the Riemann tensor of $(N,\langle\,,\,\rangle_N)$. Recall that $\varphi$ is said to harmonic if $\tau(\varphi)=0$ and bi-harmonic if $\tau_2(\varphi)=0$. Clearly harmonic maps are bi-harmonic.
	
	We denote by $\Delta$ the Laplace-Beltrami operator $\Delta u=\mbox{tr}(\mbox{Hess}(u))$ acting on functions $u:M\to \mathbb{R}$, where $\mbox{Hess}(u)=u_{ij}\theta^i\otimes \theta^j$, that is, $\Delta u=u_{ii}$. Moreover, for every $u\in\mathcal{C}^{\infty}(M)$ the $f$-Laplacian of $u$ is given by
	\begin{equation*}
	\Delta_f u:=e^f\mbox{div}(e^{-f}\nabla u).
	\end{equation*}
	It is easy to see that
	\begin{equation*}
	\Delta_f u=\Delta u-\langle \nabla f,\nabla u\rangle=u_{kk}-f_ku_k.
	\end{equation*}
	More generally, for every $X\in\mathfrak{X}(M)$ we set
	\begin{equation*}
		\Delta_X u=\Delta u-\langle X,\nabla u\rangle.
	\end{equation*}
	
	If $A$ is a symmetric two times covariant tensor on $(M,g)$ the totally traceless part of $A$ is given by
	\begin{equation*}
	\mathring{A}:=A-\frac{\mbox{tr}(A)}{m}g
	\end{equation*}
	and the symmetric two times covariant tensors $A^2$, $\Delta A$ and $\Delta_fA$ have components, in a local orthonormal coframe, respectively
	\begin{equation*}
	A^2_{ij}=A_{ik}A_{kj}, \quad \Delta A_{ij}=A_{ij,kk}
	\end{equation*}
	and
	\begin{equation*}
		\Delta_fA_{ij}=A_{ij,kk}-f_kA_{ij,k}.
	\end{equation*}

	\subsection{$\varphi$-Curvatures, harmonic-Einstein manifolds and harmonic-Ricci solitons}\label{section phi curv}
	
	We recall the definition of $\varphi$-curvatures that we shall need later on, for their proof and other details we refer to \cite{ACR} or Section 1.2 of \cite{A}.
	
	Let $(M,g)$ be a Riemannian manifold of dimension $m\geq 2$, $(N,\langle\,,\,\rangle_N)$ a target Riemannian manifold and $\alpha$ a positive real constant. The $\varphi$-Ricci tensor is defined as
	\begin{equation}\label{def phi Ricci}
	\mbox{Ric}^{\varphi}:=\mbox{Ric}-\alpha \varphi^*\langle\,,\,\rangle_N
	\end{equation}
	and its trace is denoted $S^{\varphi}$ and is called $\varphi$-scalar curvature. The $\varphi$-Schouten tensor is given by
	\begin{equation}\label{def of phi schouten}
	 A^{\varphi}:=\mbox{Ric}^{\varphi}-\frac{S^{\varphi}}{2(m-1)}g
	\end{equation}
	and the $\varphi$-Cotton tensor, that measures the failure of the commutation of the covariant derivatives of the $\varphi$-Schouten tensor, in global notation is given by
	\begin{equation*}
	C^{\varphi}(X,Y,Z):=\nabla_ZA^{\varphi}(X,Y)-\nabla_YA^{\varphi}(X,Z) \quad \mbox{ for every } X,Y,Z\in\mathfrak{X}(M)
	\end{equation*}
	while, in moving frame notation, its components in a local orthonormal coframe are given by
	\begin{equation}\label{def of phi Cotton}
	C^{\varphi}_{ijk}=A^{\varphi}_{ij,k}-A^{\varphi}_{ik,j},
	\end{equation}
	where we denoted by $A^{\varphi}_{ij}$ the components of the $\varphi$-Schouten tensor in a local orthonormal coframe.
	
	For manifolds of dimension $m\geq 3$ we are able to define the $\varphi$-Weyl tensor by
	\begin{equation}\label{def of phi Weyl}
	W^{\varphi}:=\mbox{Riem}-\frac{1}{m-2}A^{\varphi}\circleland g,
	\end{equation}
	where $\circleland$ denotes the Kulkarni-Nomizu product of two times covariant symmetric tensors, that is,
	\begin{equation*}
	(T\circleland V)(X,Y,Z,W)=T(X,Z)V(Y,W)-T(X,W)V(Y,Z)+T(Y,W)V(X,Z)-T(Y,Z)V(X,W).
	\end{equation*}
	In a local orthonormal coframe
	\begin{equation}\label{comp phi weyl}
		W^{\varphi}_{tijk}=R_{tijk}-\frac{1}{m-2}(A^{\varphi}_{tj}\delta_{ik}-A^{\varphi}_{tk}\delta_{ij}+A^{\varphi}_{ik}\delta_{tj}-A^{\varphi}_{ij}\delta_{tk}).
	\end{equation}

	We point out that the divergence of the $\varphi$-Weyl is not, in general, a multiple of $\varphi$-Cotton tensor. Indeed the following holds
	\begin{equation}\label{div di phi Weyl}
	W^{\varphi}_{tijk,t}=\frac{m-3}{m-2}C^{\varphi}_{ikj}+\alpha (\varphi^a_{ij}\varphi^a_k-\varphi^a_{ik}\varphi^a_j)+\frac{\alpha}{m-2}\varphi^a_{tt}(\varphi^a_j\delta_{ik}-\varphi^a_k\delta_{ij}).
	\end{equation}
	The traces of $\varphi$-Cotton and of $\varphi$-Weyl are given by, respectively, 
	\begin{equation}\label{traccia phi cotton}
	C^{\varphi}_{jji}=\alpha \varphi^a_{jj}\varphi^a_i
	\end{equation}
	and
	\begin{equation}\label{traccia phi weyl}
	W^{\varphi}_{kikj}=\alpha\varphi^a_i\varphi^a_j.
	\end{equation}

	Following P. Baird and J. Eells, see \cite{BaE}, we define the stress-energy tensor of $\varphi$ (with a different sign convention) by
	\begin{equation}\label{def tensore en stress}
	T:=\varphi^*\langle\,,\,\rangle_N-\frac{|d\varphi|^2}{2}g,
	\end{equation}
	where $|d\varphi|^2=\mbox{tr}(\varphi^*\langle\,,\,\rangle_N)$ is the square of the Hilbert-Schmidt norm of $d\varphi$. It is easy to see that, in a local orthonormal coframe,
	\begin{equation}\label{div tensore en stress}
	\mbox{div}(T)_j=\varphi^a_{ii}\varphi^a_j.
	\end{equation}
	A map $\varphi$ is called conservative if the energy-stress tensor $T$ is divergence free. From the formula above harmonic maps are conservative.
	
	The generalized Schur's identity is given by
	\begin{equation*}
	\mbox{div}(\mbox{Ric}^{\varphi})=\frac{1}{2}dS^{\varphi}-\alpha \mbox{div}(T),
	\end{equation*}
	locally
	\begin{equation}\label{div of phi Ricci}
		R^{\varphi}_{ij,j}=\frac{1}{2}S^{\varphi}_i-\alpha \varphi^a_{jj}\varphi^a_i,
	\end{equation}
	where $R^{\varphi}_{ij}$ are the components of the $\varphi$-Ricci tensor in a local orthonormal coframe.

	Finally, the $\varphi$-Bach tensor $ B^{\varphi}$ has components, in a local orthonormal coframe and for manifolds of dimension $m\geq 3$,
	\begin{equation}\label{def phi bach}
	(m-2)B^{\varphi}_{ij}=C^{\varphi}_{ijk,k}+R^{\varphi}_{tk}(W^{\varphi}_{tikj}-\alpha \varphi^a_t\varphi^a_i\delta_{jk})+\alpha\left(\varphi^a_{ij}\varphi_{kk}^a-\varphi^a_{kkj}\varphi^a_i-\frac{1}{m-2}|\tau(\varphi)|^2\delta_{ij}\right).
	\end{equation}
	It is not immediate to see but the $\varphi$-Bach tensor is symmetric and its trace is given by
	\begin{equation}\label{traccia phi bach}
	(m-2)\mbox{tr}(B^{\varphi})=\alpha\frac{m-4}{m-2}|\tau(\varphi)|^2.
	\end{equation}
	
	It remains only to define the tensor field $J$: its components are given by
	\begin{equation}\label{def di J}
	J^a:=\frac{mS^{\varphi}}{(m-1)(m-2)}\varphi_{ii}^a-\frac{m-2}{2(m-1)}S^{\varphi}_i\varphi^a_i-2R^{\varphi}_{ij}\varphi^a_{ij}+2\tau(\varphi)^b\varphi^b_i\varphi^a_i-\tau_2(\varphi)^a.
	\end{equation}
	For the motivation that led to its definition we refer to \cite{A20}.
	
	\begin{dfn}
		Let $(M,g)$ be Riemannian manifold of dimension $m\geq 2$, $\alpha$ a positive constant, $\varphi:M\to N$ a smooth map, where the target $(N,\langle\,,\,\rangle_N)$ is a Riemannian manifold, and $\lambda\in \mathbb{R}$. Then $(M,g)$ is called {\em harmonic-Einstein} (with respect to $\alpha$, $\varphi$ and $\lambda$) if
		\begin{equation}\label{harm eoinst non intro}
		\begin{dcases}
		\mbox{Ric}^{\varphi}=\lambda g\\
		\tau(\varphi)=0.
		\end{dcases}
		\end{equation}
		In case $\lambda=0$ we say that $(M,g)$ is {\em $\varphi$-Ricci flat} with respect to $\alpha$.
	\end{dfn}
	\begin{rmk}
		Notice that harmonic-Einstein manifolds have parallel $\varphi$-Ricci tensor and thus they are $\varphi$-Cotton flat. Moreover it is possible to see that they satisfy $B^{\varphi}=0$ and $J=0$.
	\end{rmk}
	
	\begin{dfn}
		Let $(M,g)$ be Riemannian manifold of dimension $m\geq 2$, $\alpha$ a positive constant, $\varphi:M\to N$ a smooth map, where the target $(N,\langle\,,\,\rangle_N)$ is a Riemannian manifold, $X\in\mathfrak{X}(M)$ and $\lambda\in \mathbb{R}$. Then $(M,g)$ is called {\em harmonic-Ricci soliton} (with respect to $\alpha$, $\varphi$, $X$ and $\lambda$) if 
		\begin{equation}\label{harm ricci soliton con campo vettoriale non intro}
		\begin{dcases}
		\mbox{Ric}^{\varphi}+\frac{1}{2}\mathcal{L}_Xg=\lambda g\\
		\tau(\varphi)=d\varphi(X).
		\end{dcases}
		\end{equation}
		The harmonic-Ricci soliton is called {\em shrinking}, {\em steady} or {\em expanding} if, respectively, $\lambda>0$, $\lambda=0$ or $\lambda<0$.
	\end{dfn}
	Every harmonic-Einstein endowed with a {\em vertical Killing vector field} $Y\in\mathfrak{X}(M)$, i.e., a solution of
	\begin{equation}\label{vertical killing vector field}
		\begin{dcases}
		\mathcal{L}_Yg=0\\
		d\varphi(Y)=0,
		\end{dcases}
	\end{equation}
	is trivially a harmonic-Ricci soliton. Furthermore, in \eqref{harm ricci soliton con campo vettoriale non intro}, the vector field $X$ can be replaced with $X+Y$, for a vertical Killing vector field $Y$. In case $X=\nabla f+Y$ for some vertical Killing vector field $Y\in\mathfrak{X}(M)$, the equations \eqref{harm ricci soliton con campo vettoriale non intro} can be written as
	\begin{equation}\label{grad harm ricci soliton non intro}
	\begin{dcases}
	\mbox{Ric}^{\varphi}+\mbox{Hess}(f)=\lambda g\\
	\tau(\varphi)=d\varphi(\nabla f)
	\end{dcases}
	\end{equation}
	and we say that $(M,g)$ is a {\em gradient harmonic-Ricci soliton}. The function $f$ is called {\em potential function} and it is defined up to an additive constant.
		
	\subsection{Preliminaries}
	
	We list the statement of some results that shall be useful in the rest of the article.
	
	We start with Theorem 5.1.1 of \cite{A}.
	\begin{thm}\label{thm with S phi constant is the manifold is a sphere with mu equal to zero}
		Let $(M,g)$ be a compact Riemannian manifold of dimension $m\geq 2$ with an Einstein-type structure of the form
		\begin{equation}\label{einstein type structure with mu equal to zero and X vector field in compact case}
		\begin{dcases}
		\mbox{Ric}^{\varphi}+\frac{1}{2}\mathcal{L}_Xg=\lambda g\\
		\tau(\varphi)=d\varphi(X),
		\end{dcases}
		\end{equation}
		for some $X\in\mathfrak{X}(M)$, $\lambda\in\mathcal{C}^{\infty}(M)$, $\alpha>0$ and $\varphi:M\to N$ a smooth map with target a Riemannian manifold $(N,\langle\,,\,\rangle_N)$. If $S^{\varphi}$ is constant then the structure \eqref{einstein type structure with mu equal to zero and X vector field in compact case} reduces to a harmonic-Einstein structure, that is,
		\begin{equation*}
		\begin{dcases}
		\mbox{Ric}^{\varphi}=\frac{S^{\varphi}}{m} g\\
		\tau(\varphi)=0
		\end{dcases}
		\end{equation*}
		with $S^{\varphi}$ constant.
	\end{thm}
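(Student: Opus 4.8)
The plan is to extract a single integral identity that forces the totally traceless part of $\mbox{Ric}^{\varphi}$ and the tension field $\tau(\varphi)$ to vanish simultaneously. I would begin by taking the totally traceless part of the first equation in \eqref{einstein type structure with mu equal to zero and X vector field in compact case}: since $\lambda g$ is pure trace, this annihilates the right-hand side entirely (so the hypothesis that $\lambda$ is merely a function, not a constant, costs nothing), leaving
\begin{equation*}
\mathring{\mbox{Ric}^{\varphi}}=-\tfrac{1}{2}\mathring{\mathcal{L}_Xg}.
\end{equation*}
Writing $\mathring{R^{\varphi}}_{ij}$ for the components of $\mathring{\mbox{Ric}^{\varphi}}$ and $(\mathcal{L}_Xg)_{ij}=X_{i,j}+X_{j,i}$, I would contract this identity with $\mathring{R^{\varphi}}_{ij}$. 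Because a traceless symmetric tensor is orthogonal to the pure-trace part of $\mathcal{L}_Xg$, and $\mathring{\mbox{Ric}^{\varphi}}$ is symmetric, this yields pointwise $|\mathring{\mbox{Ric}^{\varphi}}|^2=-\tfrac12\mathring{R^{\varphi}}_{ij}(\mathcal{L}_Xg)_{ij}=-\mathring{R^{\varphi}}_{ij}X_{i,j}$.

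Integrating over the compact $M$ and integrating by parts (no boundary terms, by compactness) moves the derivative onto the curvature factor:
\begin{equation*}
\int_M|\mathring{\mbox{Ric}^{\varphi}}|^2\mu=\int_M\mathring{R^{\varphi}}_{ij,j}X_i\,\mu.
\end{equation*}
Here is where the hypothesis enters: since $S^{\varphi}$ is constant, $\mathring{R^{\varphi}}_{ij,j}=R^{\varphi}_{ij,j}-\tfrac1m S^{\varphi}_i=R^{\varphi}_{ij,j}$, and the generalized Schur identity \eqref{div of phi Ricci} gives $R^{\varphi}_{ij,j}=-\alpha\varphi^a_{jj}\varphi^a_i=-\alpha\,\tau(\varphi)^a\varphi^a_i$. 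Substituting and invoking the second soliton equation $\tau(\varphi)=d\varphi(X)$, that is $\tau(\varphi)^a=\varphi^a_iX_i$, the contraction $\varphi^a_iX_i$ closes up into $\tau(\varphi)^a$ and I obtain
\begin{equation*}
\int_M|\mathring{\mbox{Ric}^{\varphi}}|^2\mu=-\alpha\int_M|\tau(\varphi)|^2\mu.
\end{equation*}
Since $\alpha>0$ and both integrands are nonnegative, both must vanish pointwise: $\tau(\varphi)=0$ and $\mathring{\mbox{Ric}^{\varphi}}=0$, the latter being exactly $\mbox{Ric}^{\varphi}=\tfrac{S^{\varphi}}{m}g$. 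This is the asserted harmonic-Einstein structure, with $S^{\varphi}$ constant by hypothesis.

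I do not anticipate a genuine computational obstacle; the essential point — and the only delicate step — is recognizing which combination makes everything close. The failure of $\mbox{Ric}^{\varphi}$ to be divergence-free is governed precisely by the stress-energy term $-\alpha\varphi^a_{jj}\varphi^a_i$ in \eqref{div of phi Ricci}, and this term is exactly what pairs against the map equation $\tau(\varphi)=d\varphi(X)$ to produce the sign-definite quantity $-\alpha|\tau(\varphi)|^2$. The remaining care is bookkeeping: checking that passing to the traceless part is legitimate before using constancy of $S^{\varphi}$, and that the integration by parts is clean. Notably the argument never uses $\lambda$ itself, which is why it tolerates $\lambda\in\mathcal{C}^{\infty}(M)$ rather than requiring it constant.
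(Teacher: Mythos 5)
Your proof is correct, and it takes a genuinely different route from the paper's. The paper (following Theorem 5.1.1 of \cite{A}) first establishes the pointwise Bochner-type identity \eqref{X laplacian per strutt tipo Einstein} for $\tfrac12\Delta_X S^{\varphi}$ --- a computation requiring the commutation rules and a fair amount of curvature bookkeeping --- then integrates it and manipulates the trace of the soliton equation to arrive at
\begin{equation*}
\frac{m-2}{2m}\int_M\langle X,\nabla S^{\varphi}\rangle=\int_M\left(|\mathring{\mbox{Ric}}^{\varphi}|^2+\alpha|\tau(\varphi)|^2\right),
\end{equation*}
whose left-hand side vanishes when $S^{\varphi}$ is constant. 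You bypass the pointwise formula entirely: projecting the soliton equation onto its traceless part, contracting with $\mathring{\mbox{Ric}}^{\varphi}$, and performing a single integration by parts reduces everything to the generalized Schur identity \eqref{div of phi Ricci}, whose stress-energy term closes against the map equation $\tau(\varphi)=d\varphi(X)$ to give exactly $-\alpha|\tau(\varphi)|^2$. Under the constancy hypothesis your identity $\int_M|\mathring{\mbox{Ric}}^{\varphi}|^2=-\alpha\int_M|\tau(\varphi)|^2$ is precisely the specialization of the paper's, but derived with far less machinery; it also makes structurally transparent why $\lambda\in\mathcal{C}^{\infty}(M)$ costs nothing (the traceless projection annihilates it at the start, rather than the $(m-1)\Delta\lambda$ term having to be integrated away at the end). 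What the paper's heavier route buys is reusability: the pointwise formula \eqref{X laplacian per strutt tipo Einstein} is an elliptic identity that feeds the maximum-principle arguments elsewhere in the paper (e.g.\ Theorem \ref{thm stime phi curv scalare nella tesi} and Proposition \ref{prop compact ricci harm soliton non rigid shrink}), whereas your integral identity only exists globally on a compact manifold. For the statement at hand, your argument is complete and self-contained.
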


	The fundamental step in the proof of the above Theorem is to show that integrating the equation
	\begin{equation}\label{X laplacian per strutt tipo Einstein}
	\frac{1}{2}\Delta_XS^{\varphi}=-\alpha|\tau(\varphi)|^2-|\mathring{\mbox{Ric}}^{\varphi}|^2-(S^{\varphi}-m\lambda)\frac{S^{\varphi}}{m}+(m-1)\Delta\lambda,
	\end{equation}
	one gets the validity of
	\begin{equation*}
	\frac{m-2}{2m}\int_M\langle X,\nabla S^{\varphi}\rangle=\int_M(|\mathring{\mbox{Ric}}^{\varphi}|^2+\alpha |\tau(\varphi)|^2).
	\end{equation*}

	The following is part of Theorem 7.3.3 of \cite{A}. Its proof follows closely the one of Theorem 8.6 of \cite{AMR} and, once again, relies on the validity of \eqref{X laplacian per strutt tipo Einstein} and a clever use of the maximum principle.
	\begin{thm}\label{thm stime phi curv scalare nella tesi}
		Let $(M,g)$ be a complete  gradient harmonic-Ricci soliton of dimension $m\geq 2$ with respect to $\varphi:M\to N$ smooth map, where $(N,\langle\,,\,\rangle_N)$ is a Riemannian manifold, $f\in\mathcal{C}^{\infty}(M)$, $\alpha>0$ and $\lambda\geq 0$. Denoting $S^{\varphi}_*:=\inf_MS^{\varphi}$ we have $S^{\varphi}_*>-\infty$. Moreover
		\begin{itemize}
			\item[i)] If $\lambda=0$ then
			\begin{equation*}
			S^{\varphi}_*=0.
			\end{equation*}
			Then either $S^{\varphi}>0$ on $M$ or, if $f$ is non constant, $(M,g)$ splits as the Riemannian product of $\mathbb{R}$ with a totally geodesic $\psi$-Ricci flat hypersurface $\Sigma$, where $\psi:=\left.\varphi\right|_{\Sigma}$. Moreover $\varphi=\psi\circ \pi_{\Sigma}$ on $\mathbb{R}\times \Sigma$, where $\pi_{\Sigma}:\mathbb{R}\times \Sigma\to \Sigma$ is the canonical projection and the function $f$ can be expressed on $\mathbb{R}\times \Sigma$ as
			\begin{equation}
			f(t,x)=at+b \quad \mbox{ for every } t\in \mathbb{R} \mbox{ and } x\in\Sigma,
			\end{equation}
			for some $a>0$ and $b\in\mathbb{R}$ such that $\Sigma=f^{-1}(\{b\})$.
			\item[ii)] If $\lambda>0$ then
			\begin{equation}\label{stima inf curv scalare per lambda positivo}
			0\leq S^{\varphi}_*\leq m\lambda.
			\end{equation}
			
			If there exists $x_0\in M$ such that $S^{\varphi}(x_0)=0$ then $(M,g)$ is isometric to the euclidean space $\mathbb{R}^m$ and $\varphi$ is a constant map. Moreover, the potential $f$ can be expressed on $\mathbb{R}^m$ as $f(x)=\frac{\lambda}{2}|x|^2+\langle b,x\rangle+c$ for some $b\in\mathbb{R}^m$ and $c\in\mathbb{R}$, for every $x\in\mathbb{R}^m$.
			
			If $S^{\varphi}_*=m\lambda$ either $S^{\varphi}>m\lambda$ or $M$ is compact and $f$ is constant.
		\end{itemize}
	\end{thm}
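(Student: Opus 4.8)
The plan is to extract every conclusion from the drifted identity \eqref{X laplacian per strutt tipo Einstein}. In the gradient case $X=\nabla f$ with $\lambda\in\mathbb{R}$ \emph{constant}, so $\Delta\lambda=0$ and $\Delta_X=\Delta_f$, and \eqref{X laplacian per strutt tipo Einstein} reduces to
\begin{equation}\label{star-plan}
\tfrac12\Delta_fS^{\varphi}=-\alpha|\tau(\varphi)|^2-|\mathring{\mbox{Ric}}^{\varphi}|^2-\tfrac1m S^{\varphi}(S^{\varphi}-m\lambda).
\end{equation}
The structural input is that tracing the first soliton equation into $\mbox{Ric}+\mbox{Hess}(f)=\lambda g+\alpha\varphi^*\langle\,,\,\rangle_N$ shows the Bakry--Émery tensor $\mbox{Ric}_f:=\mbox{Ric}+\mbox{Hess}(f)\geq\lambda g\geq 0$, because $\alpha>0$ and $\varphi^*\langle\,,\,\rangle_N\geq 0$. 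This is precisely the hypothesis granting the weak (Omori--Yau) maximum principle for $\Delta_f$ on the complete manifold $(M,g)$. Applying it to $u=S^{\varphi}$ produces a sequence $(x_k)$ with $S^{\varphi}(x_k)\to S^{\varphi}_*$, $|\nabla S^{\varphi}|(x_k)\to 0$ and $\liminf_k\Delta_fS^{\varphi}(x_k)\geq 0$; evaluating \eqref{star-plan} along it and dropping the two nonpositive terms gives $0\leq-\tfrac1m S^{\varphi}_*(S^{\varphi}_*-m\lambda)$, that is $0\leq S^{\varphi}_*\leq m\lambda$. This yields $S^{\varphi}_*>-\infty$, the value $S^{\varphi}_*=0$ when $\lambda=0$, and the estimate \eqref{stima inf curv scalare per lambda positivo} when $\lambda>0$.

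Next I would convert each attained-extremum case into rigidity through the strong maximum principle and a feedback into \eqref{star-plan}. When $S^{\varphi}$ attains the value $0$ at an interior point (the steady case, and the case $S^{\varphi}(x_0)=0$ with $\lambda>0$), \eqref{star-plan} gives the linear inequality $\Delta_fS^{\varphi}\leq 2\lambda S^{\varphi}$ for the nonnegative function $S^{\varphi}$; since the zeroth order coefficient is nonpositive and the interior minimum value is $0$, the strong maximum principle forces $S^{\varphi}\equiv 0$. When instead $S^{\varphi}_*=m\lambda$ is attained, the function $w:=m\lambda-S^{\varphi}\leq 0$ satisfies $\Delta_f w\geq 0$ and attains its interior maximum $0$, so again $w\equiv 0$, i.e. $S^{\varphi}\equiv m\lambda$. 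In every such case $S^{\varphi}$ is constant, so the left-hand side of \eqref{star-plan} vanishes and one reads off $\alpha|\tau(\varphi)|^2+|\mathring{\mbox{Ric}}^{\varphi}|^2\equiv 0$; hence $\tau(\varphi)\equiv 0$ and $\mbox{Ric}^{\varphi}=\tfrac{S^{\varphi}}{m}g$, i.e. $(M,g)$ is harmonic-Einstein.

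It then remains to identify the geometry from the prescribed Hessian of $f$. In the steady case $\mbox{Ric}^{\varphi}\equiv 0$ yields $\mbox{Hess}(f)\equiv 0$; if $f$ is nonconstant then $\nabla f$ is a nontrivial parallel field of constant length $a>0$, whose flow splits $(M,g)$ isometrically as $\mathbb{R}\times\Sigma$ with $\Sigma=f^{-1}(\{b\})$ totally geodesic and $f(t,x)=at+b$, while $\tau(\varphi)=d\varphi(\nabla f)=0$ shows $\varphi$ is constant along the line factor, so $\varphi=\psi\circ\pi_{\Sigma}$ and $\mbox{Ric}^{\varphi}\equiv 0$ descends to $\mbox{Ric}^{\psi}\equiv 0$ on $\Sigma$. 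In the case $S^{\varphi}(x_0)=0$ with $\lambda>0$ one gets $\mbox{Hess}(f)=\lambda g$, $\lambda>0$, and Tashiro's theorem identifies $(M,g)$ with flat $\mathbb{R}^m$ and $f(x)=\tfrac{\lambda}{2}|x|^2+\langle b,x\rangle+c$; flatness forces $\alpha\varphi^*\langle\,,\,\rangle_N=\mbox{Ric}=0$, whence $\varphi$ is constant. Finally, if $S^{\varphi}_*=m\lambda$ with $\lambda>0$, then $\mbox{Ric}=\lambda g+\alpha\varphi^*\langle\,,\,\rangle_N\geq\lambda g>0$, so Bonnet--Myers makes $M$ compact and $\mbox{Hess}(f)\equiv 0$ on a compact manifold forces $f$ constant.

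The step I expect to be the main obstacle is the very first one: justifying that the maximum principle for $\Delta_f$ genuinely applies on a complete, possibly noncompact, soliton, equivalently securing $S^{\varphi}_*>-\infty$ \emph{without} presupposing it. The bound $\mbox{Ric}_f\geq 0$ is the correct qualitative ingredient, but in the shrinking case $|\nabla f|$ may be unbounded, so the weak maximum principle cannot be invoked off the shelf. The clean remedy is to verify a Khas'minskii/Omori--Yau condition from the soliton structure itself --- using $f$ as an exhaustion together with $\Delta_f f=(m\lambda-S^{\varphi})-|\nabla f|^2$ and the (quadratic) growth of $f$ --- so that the weak maximum principle holds and the sign analysis of \eqref{star-plan} can be run. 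Once this analytic point is settled, the rest is the elementary bookkeeping in \eqref{star-plan}, the strong maximum principle, and the classical splitting, Tashiro and Bonnet--Myers theorems.
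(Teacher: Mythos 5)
Your proposal takes essentially the same approach as the paper: the paper does not prove this theorem in-text but cites Theorem 7.3.3 of \cite{A}, stating that the proof "follows closely the one of Theorem 8.6 of \cite{AMR}" and "relies on the validity of \eqref{X laplacian per strutt tipo Einstein} and a clever use of the maximum principle" --- precisely the scheme you execute, namely specializing \eqref{X laplacian per strutt tipo Einstein} to the gradient case, running the weak maximum principle for $\Delta_f$ to get the bounds on $S^{\varphi}_*$, and then using the strong maximum principle fed back into the identity together with the classical splitting, Tashiro and Myers theorems for the rigidity statements. The analytic subtlety you flag (securing the applicability of the weak maximum principle, equivalently the lower bound on $S^{\varphi}$, via a Khas'minskii-type argument built from the soliton structure) is exactly where the cited proof does its work, so the two routes coincide.
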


	We combine Theorem 1.1 and Proposition 4.1 of \cite{YS} in a single statement.
	\begin{thm}
		Let $(M,g)$ be a complete non-compact gradient shrinking harmonic-Ricci soliton. Then for every point $p\in M$ there exist positive constants $C$ and $c$ independent from $R$ and $x$, respectively, such that, for $R$ sufficiently large
		\begin{equation*}
			\mbox{vol}(B_p(R))\leq CR^m,
		\end{equation*}
		and, for every $x\in M$,
		\begin{equation}\label{stima potenziale per solitone ricci harm}
			\frac{\lambda}{2}(r(x)-c)^2\leq f(x)\leq \frac{\lambda}{2}(r(x)+c)^2,
		\end{equation}
		where $B_p(R)$ is the geodesic ball of centre $p$ and radius $r$ and $r(x)$ is the geodesic distance from $x\in M$ to $p$.
	\end{thm}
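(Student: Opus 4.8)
The plan is to adapt the strategy of Cao and Zhou for gradient shrinking Ricci solitons, being careful about the extra terms produced by the map $\varphi$. The starting point is a conservation law. Tracing the first equation in \eqref{grad harm ricci soliton non intro} gives $S^{\varphi}+\Delta f=m\lambda$, so that $\nabla_iS^{\varphi}=-(\Delta f)_i=-f_{jji}$. Differentiating $f_{ij}=\lambda\delta_{ij}-R^{\varphi}_{ij}$ in $j$, using the Hessian symmetry and the commutation rule \eqref{comm rule der terza funzione} in the form $f_{ijj}=(\Delta f)_i+R_{ij}f_j$, the generalized Schur identity \eqref{div of phi Ricci}, and the second soliton equation $\tau(\varphi)=d\varphi(\nabla f)$ (which reads $\varphi^a_{jj}=\varphi^a_jf_j$), I would first establish the $\varphi$-analogue of Hamilton's identity
\begin{equation*}
\tfrac12 S^{\varphi}_i=R^{\varphi}_{ij}f_j .
\end{equation*}
Since $\nabla_i|\nabla f|^2=2f_{ij}f_j=2\lambda f_i-2R^{\varphi}_{ij}f_j$, this gives $\nabla(S^{\varphi}+|\nabla f|^2-2\lambda f)=0$, hence $S^{\varphi}+|\nabla f|^2-2\lambda f$ is constant. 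Absorbing the constant into $f$ (which only shifts $c$ in the statement) I may assume $S^{\varphi}+|\nabla f|^2=2\lambda f$. By part ii) of Theorem \ref{thm stime phi curv scalare nella tesi} one has $S^{\varphi}\geq 0$ since $\lambda>0$, so the conservation law forces $f\geq 0$ and $|\nabla f|^2\leq 2\lambda f$; as $|\nabla\sqrt f|\leq\sqrt{\lambda/2}$ wherever $f>0$, integrating along a minimizing geodesic from $p$ to $x$ yields $\sqrt{f(x)}\leq\sqrt{f(p)}+\sqrt{\lambda/2}\,r(x)$, which is the upper bound $f(x)\leq\frac{\lambda}{2}(r(x)+c)^2$.

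For the lower bound I would argue along a unit-speed minimizing geodesic $\gamma:[0,r_0]\to M$ from $p$ to $x$ and set $g(t)=f(\gamma(t))$. The first soliton equation gives $g''(t)=\mathrm{Hess}(f)(\gamma',\gamma')=\lambda-\mathrm{Ric}^{\varphi}(\gamma',\gamma')$, so the task reduces to bounding $\int_0^{r_0}\mathrm{Ric}^{\varphi}(\gamma',\gamma')\,dt$ from above, uniformly in $r_0$. The observation that handles the map is that
\begin{equation*}
\mathrm{Ric}^{\varphi}(\gamma',\gamma')=\mathrm{Ric}(\gamma',\gamma')-\alpha|d\varphi(\gamma')|^2\leq\mathrm{Ric}(\gamma',\gamma') ,
\end{equation*}
so it suffices to bound $\int_0^{r_0}\mathrm{Ric}(\gamma',\gamma')\,dt$. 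Here the second variation (index) inequality for the minimal geodesic $\gamma$ enters: testing it against $\phi(t)E_i(t)$ with $\{E_i\}$ parallel orthonormal and a piecewise-linear cutoff $\phi$ equal to $1$ away from the endpoints produces $\int_0^{r_0}\phi^2\,\mathrm{Ric}(\gamma',\gamma')\,dt\leq(m-1)\int_0^{r_0}(\phi')^2\,dt$, whence $\int_0^{r_0}\mathrm{Ric}(\gamma',\gamma')\,dt\leq C$ independent of $r_0$ once $\mathrm{Ric}$ is controlled on the two unit segments near the endpoints. Consequently $g'(t)\geq\lambda t-C'$ and, integrating once more, $g(r_0)\geq\frac{\lambda}{2}r_0^2-C'r_0+g(0)$, which after enlarging $c$ gives $f(x)\geq\frac{\lambda}{2}(r(x)-c)^2$.

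Finally, for the volume estimate I would combine the two-sided potential control with $\Delta f=m\lambda-S^{\varphi}\leq m\lambda$. The bounds $\frac\lambda2(r-c)^2\leq f\leq\frac\lambda2(r+c)^2$ sandwich geodesic balls between sublevel sets $D(r)=\{f<r\}$, so it is enough to estimate $V(r):=\mathrm{vol}(D(r))$. Using the co-area formula together with $|\nabla f|^2=2\lambda f-S^{\varphi}$ on the level sets and the divergence of $\nabla f$ over $D(r)$, one obtains, following Cao and Zhou, a differential inequality for $V$ forcing the polynomial growth $V(r)\leq C r^{m/2}$; evaluating at $r=\frac\lambda2(R+c)^2$ gives $\mathrm{vol}(B_p(R))\leq CR^m$ for $R$ large.

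I expect the main obstacle to be the lower potential bound, specifically the uniform-in-$r_0$ control of $\int_0^{r_0}\mathrm{Ric}(\gamma',\gamma')\,dt$ via the index inequality, including the treatment of the two endpoint segments. The contribution of the map is mild throughout, because the second soliton equation collapses the $\varphi$-terms into $\alpha(\varphi^*\langle\,,\,\rangle_N)_{ij}f_j$ in the conservation law and, for the geodesic estimate, only lowers $\mathrm{Ric}^{\varphi}$ relative to $\mathrm{Ric}$; the volume step is then essentially a reprise of the Ricci-soliton argument once the potential growth is in hand.
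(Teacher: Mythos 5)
The first thing to note is that the paper does not prove this statement at all: it is imported verbatim from the literature (``We combine Theorem 1.1 and Proposition 4.1 of \cite{YS} in a single statement''), with only the remark that one rescales the metric to pass from the normalization $\lambda=\tfrac12$ of \cite{YS} to general $\lambda>0$. So your proposal must be measured against the Yang--Shen proof, which is precisely the Cao--Zhou argument for shrinking Ricci solitons adapted to the harmonic setting. Your overall strategy coincides with it: the Hamilton-type identity \eqref{ham type id}, the upper bound via $S^{\varphi}\geq 0$ (part ii) of the quoted estimate \eqref{stima inf curv scalare per lambda positivo}) and $|\nabla\sqrt f|\leq\sqrt{\lambda/2}$, the second-variation argument along minimizing geodesics, and the Cao--Zhou sublevel-set volume argument are all the right steps.

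The genuine gap is in the lower bound, exactly at the point you yourself flag as the main obstacle, and your proposed way around it would fail. You claim $\int_0^{r_0}\mbox{Ric}(\gamma',\gamma')\,dt\leq C$ uniformly in $r_0$ ``once $\mbox{Ric}$ is controlled on the two unit segments near the endpoints.'' But $\mbox{Ric}$ is controlled only on the segment near $p$ (by compactness of $\overline{B_p(1)}$); near the far endpoint $x$ there is no a priori curvature bound of any kind, and none can be extracted before the theorem is proved. The actual mechanism in Cao--Zhou/Yang--Shen is different: on the far segment $[r_0-1,r_0]$ one does \emph{not} bound the curvature pointwise, one substitutes the soliton equation $\mbox{Ric}^{\varphi}(\gamma',\gamma')=\lambda-(f\circ\gamma)''$ and integrates by parts. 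This produces the boundary term $(f\circ\gamma)'(r_0)$, which \emph{cancels} against the same quantity appearing when one integrates $g''=\lambda-\mbox{Ric}^{\varphi}(\gamma',\gamma')$ over $[0,r_0]$, plus the term $2\int_{r_0-1}^{r_0}\phi\,(f\circ\gamma)'\,dt$, which is estimated by the already-established upper bound $|\nabla f|\leq\sqrt{2\lambda f}\leq\sqrt{2\lambda}\left(\sqrt{f(x)}+\sqrt{\lambda/2}\right)$ on that segment. The output is not a uniform bound on $\int\mbox{Ric}$ followed by ``$g'(t)\geq\lambda t-C'$''; it is directly the inequality $\sqrt{f(x)}\geq\sqrt{\lambda/2}\,r_0-C$, i.e., the potential lower bound itself, after the cancellation.

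Moreover, your preliminary reduction $\mbox{Ric}^{\varphi}(\gamma',\gamma')\leq\mbox{Ric}(\gamma',\gamma')$ actively obstructs this fix. On the far segment the soliton equation controls $\mbox{Ric}^{\varphi}$, not $\mbox{Ric}$; converting back via $\mbox{Ric}=\lambda g-\mbox{Hess}(f)+\alpha\varphi^*\langle\,,\,\rangle_N$ reintroduces the term $+\alpha|d\varphi(\gamma')|^2$, which is nonnegative (wrong sign for an upper bound) and a priori unbounded, since a harmonic-Ricci soliton carries no pointwise bound on $|d\varphi|$. The inequality $\mbox{Ric}^{\varphi}\leq\mbox{Ric}$ (the only place $\alpha>0$ enters) must be used only where the index form or the compactness near $p$ is applied, namely on $\int\phi^2\mbox{Ric}^{\varphi}\leq\int\phi^2\mbox{Ric}\leq(m-1)\int(\phi')^2$ and on $[0,1]$; on $[r_0-1,r_0]$ one must keep $\mbox{Ric}^{\varphi}$ and use the soliton equation. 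With that correction and the cancellation bookkeeping above, your outline becomes the proof of \cite{YS}; as written, the crucial step does not go through.
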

	\begin{rmk}
		F. Yang and J. Shen in \cite{YS} deals with the normalized case where $\lambda=\frac{1}{2}$, but rescaling the metric clearly one can obtain the result for any $\lambda>0$. 
	\end{rmk}
	\begin{rmk}\label{rmk su properness potential funct}
		The estimate \eqref{stima potenziale per solitone ricci harm} shows that the potential function of a complete non-compact gradient shrinking harmonic-Ricci soliton is proper.
	\end{rmk}

	\begin{rmk}
		For a complete gradient shrinking harmonic-Ricci soliton of dimension $m\geq 2$ we have the following Hamilton-type identity
		\begin{equation}\label{ham type id}
		S^{\varphi}+|\nabla f|^2-2\lambda f \quad \mbox{ is constant on } M,
		\end{equation}
		see for instance equation $(7.1.7)$ of Proposition 7.1.5 in \cite{A}. Hence, by adding a suitable constant to the potential function, we may assume
		\begin{equation}\label{stima curv scalar per scelta opportuna potenziale}
		S^{\varphi}+|\nabla f|^2=2\lambda f.
		\end{equation}
		Then, for every $x\in M$,
		\begin{equation}\label{stima phi curv scalare come monteanu}
		0\leq S^{\varphi}(x)\leq \lambda^2(r(x)+c)^2,
		\end{equation}
		indeed, combining \eqref{stima inf curv scalare per lambda positivo}, \eqref{stima curv scalar per scelta opportuna potenziale} and \eqref{stima potenziale per solitone ricci harm}, one has the following chain of inequalities
		\begin{equation*}
		0\leq S^{\varphi}_*\leq S^{\varphi}=2\lambda f-|\nabla f|^2\leq 2\lambda f\leq \lambda^2(r+c)^2.
		\end{equation*}
		
		Now, since from \eqref{stima phi curv scalare come monteanu} and \eqref{stima potenziale per solitone ricci harm} both the potential function and the $\varphi$-scalar curvature have polynomial growth and the volume of $M$ is at most Euclidean, it is clear that for every $\mu,\gamma\in\mathbb{R}$
		\begin{equation}\label{finitezza inte di s phi e nabla f quadro}
		\int_M(S^{\varphi})^{\gamma}e^{-\mu f}<+\infty, \quad \int_M|\nabla f|^{\gamma}e^{-\mu f}<+\infty.
		\end{equation}
		The validity of \eqref{finitezza inte di s phi e nabla f quadro} will be crucial in \hyperref[section noncompac shirnking]{Section \ref*{section noncompac shirnking}}.
	\end{rmk}
	
	\section{Fundamental calculations}\label{sect calc}
	
	We denote by $F^{\varphi}$ the three-times covariant tensor representing the obstruction to the commutation of the covariant derivative of $\mbox{Ric}^{\varphi}$, more precisely we give
	\begin{dfn}
		Let $(M,g)$ be a Riemannian manifold of dimension $m\geq 2$, $\alpha$ a positive constant and $\varphi:M\to N$ a smooth map, where $(N,\langle\,,\,\rangle_N)$ is a target Riemannian manifold. The components of the three times covariant tensor field $F^{\varphi}$ are given, in a local orthonormal coframe, by
		\begin{equation}\label{def di F phi}
		F^{\varphi}_{ijk}:=R^{\varphi}_{ij,k}-R^{\varphi}_{ik,j}.
		\end{equation}
	\end{dfn}
	Recalling the definitions of the $\varphi$-Schouten tensor \eqref{def of phi schouten} and of the $\varphi$-Cotton tensor \eqref{def of phi Cotton} the following Proposition is easy to prove.
	\begin{prop}
		Let $(M,g)$ be a Riemannian manifold of dimension $m\geq 2$, $\alpha$ a positive constant and $\varphi:M\to N$ a smooth map, where $(N,\langle\,,\,\rangle_N)$ is a target Riemannian manifold. Then
		\begin{equation}\label{norma di F quadro relazionata a norma di C phi}
		|F^{\varphi}|^2=|C^{\varphi}|^2+\frac{2\alpha}{m-1}\mbox{div}(T)(\nabla S^{\varphi})+\frac{1}{2(m-1)}|\nabla S^{\varphi}|^2,
		\end{equation}
		where $T$ is the energy-stress tensor of the smooth map $\varphi$.
	\end{prop}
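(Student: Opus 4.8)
The plan is to reduce everything to a purely algebraic identity relating $F^{\varphi}$ to $C^{\varphi}$ and the gradient of $S^{\varphi}$, and then take the squared norm. First I would substitute the definition \eqref{def of phi schouten} of the $\varphi$-Schouten tensor into \eqref{def of phi Cotton}. Since $A^{\varphi}_{ij,k}=R^{\varphi}_{ij,k}-\tfrac{1}{2(m-1)}S^{\varphi}_k\delta_{ij}$, the antisymmetrization defining $C^{\varphi}_{ijk}$ produces exactly $F^{\varphi}_{ijk}$ plus a correction supported on the metric, yielding
\begin{equation*}
F^{\varphi}_{ijk}=C^{\varphi}_{ijk}+\frac{1}{2(m-1)}\left(S^{\varphi}_k\delta_{ij}-S^{\varphi}_j\delta_{ik}\right)=:C^{\varphi}_{ijk}+G_{ijk}.
\end{equation*}
Squaring gives $|F^{\varphi}|^2=|C^{\varphi}|^2+2\,C^{\varphi}_{ijk}G_{ijk}+|G|^2$, so the task splits into identifying the cross term with $\tfrac{2\alpha}{m-1}\mbox{div}(T)(\nabla S^{\varphi})$ and the remainder $|G|^2$ with $\tfrac{1}{2(m-1)}|\nabla S^{\varphi}|^2$.

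The term $|G|^2$ is routine: contracting $G_{ijk}G_{ijk}$ with the two Kronecker deltas produces four pieces $\delta_{ij}\delta_{ij}=m$, two mixed pieces each equal to $|\nabla S^{\varphi}|^2$, and $\delta_{ik}\delta_{ik}=m$, so that the numerator collapses to $2(m-1)|\nabla S^{\varphi}|^2$ and, after dividing by $4(m-1)^2$, gives precisely $\tfrac{1}{2(m-1)}|\nabla S^{\varphi}|^2$.

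The cross term is where the real work lies, and it is the step I expect to be the main obstacle, since it forces me to contract $C^{\varphi}$ against the metric in both of its relevant traces. The contraction $\tfrac{1}{m-1}C^{\varphi}_{ijk}(S^{\varphi}_k\delta_{ij}-S^{\varphi}_j\delta_{ik})$ produces $C^{\varphi}_{iik}S^{\varphi}_k$ and $-C^{\varphi}_{iji}S^{\varphi}_j$. The first trace is given directly by \eqref{traccia phi cotton} together with \eqref{div tensore en stress}, namely $C^{\varphi}_{iik}=\alpha\varphi^a_{ll}\varphi^a_k=\alpha\,\mbox{div}(T)_k$. The second trace $C^{\varphi}_{iji}$ is not recorded in the preliminaries and must be computed: writing $C^{\varphi}_{iji}=A^{\varphi}_{ij,i}-A^{\varphi}_{ii,j}$, I would use the generalized Schur identity \eqref{div of phi Ricci} to evaluate $R^{\varphi}_{ij,i}$ and the trace $A^{\varphi}_{ii}=\tfrac{m-2}{2(m-1)}S^{\varphi}$. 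The delicate point is that all the $S^{\varphi}_j$ contributions cancel exactly (the coefficients $\tfrac12-\tfrac{1}{2(m-1)}-\tfrac{m-2}{2(m-1)}$ sum to zero), leaving $C^{\varphi}_{iji}=-\alpha\varphi^a_{ii}\varphi^a_j=-\alpha\,\mbox{div}(T)_j$.

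Combining the two traces, both contributions to the cross term carry the same sign, so $2\,C^{\varphi}_{ijk}G_{ijk}=\tfrac{1}{m-1}\bigl(\alpha\,\mbox{div}(T)(\nabla S^{\varphi})+\alpha\,\mbox{div}(T)(\nabla S^{\varphi})\bigr)=\tfrac{2\alpha}{m-1}\mbox{div}(T)(\nabla S^{\varphi})$, which is exactly the middle term of \eqref{norma di F quadro relazionata a norma di C phi}. Assembling the three pieces then yields the claimed identity. The only genuinely nontrivial ingredient is the computation of the second trace $C^{\varphi}_{iji}$ and verifying the cancellation of the $\nabla S^{\varphi}$ terms there; everything else is bookkeeping with Kronecker deltas.
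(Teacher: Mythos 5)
Your proof is correct: the decomposition $F^{\varphi}_{ijk}=C^{\varphi}_{ijk}+G_{ijk}$, the value $|G|^2=\tfrac{1}{2(m-1)}|\nabla S^{\varphi}|^2$, and both trace computations all check out, so the three pieces assemble into \eqref{norma di F quadro relazionata a norma di C phi} exactly as you claim. The paper uses the same decomposition and the same two inputs (the Schur identity \eqref{div of phi Ricci} and the trace \eqref{traccia phi cotton}), but organizes the expansion differently: since $F^{\varphi}_{ijk}$ and $C^{\varphi}_{ijk}$ are antisymmetric in their last two indices, it writes $|F^{\varphi}|^2=2F^{\varphi}_{ijk}R^{\varphi}_{ij,k}$ and $|C^{\varphi}|^2=2C^{\varphi}_{ijk}A^{\varphi}_{ij,k}$, so everything is contracted against $\nabla\mbox{Ric}^{\varphi}$ rather than squared outright; this sidesteps both the $|G|^2$ computation and the mixed trace $C^{\varphi}_{iji}$. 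Your route expands the square fully and therefore needs $C^{\varphi}_{iji}$, which you single out as the main obstacle and compute via Schur's identity. It is worth noting that this step is actually immediate: the same antisymmetry $C^{\varphi}_{ijk}=-C^{\varphi}_{ikj}$, evident from the definition \eqref{def of phi Cotton}, gives
\begin{equation*}
C^{\varphi}_{iji}=-C^{\varphi}_{iij}=-\alpha\varphi^a_{ii}\varphi^a_j=-\alpha\,\mbox{div}(T)_j
\end{equation*}
directly from \eqref{traccia phi cotton} and \eqref{div tensore en stress}, so your longhand derivation (with the cancellation of the $S^{\varphi}_j$ coefficients) is a correct but avoidable detour, and in fact serves as a consistency check of the trace formula.
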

	\begin{proof}
		By the definition \eqref{def di F phi} of $F^{\varphi}$
		\begin{equation}\label{norma di F phi}
		|F^{\varphi}|^2=2F^{\varphi}_{ijk}R^{\varphi}_{ij,k}.
		\end{equation}
		Using \eqref{def of phi Cotton} and \eqref{def of phi schouten} we easily get
		\begin{equation*}
		F^{\varphi}_{ijk}=C^{\varphi}_{ijk}+\frac{1}{2(m-1)}(S^{\varphi}_k\delta_{ij}-S^{\varphi}_j\delta_{ik}),
		\end{equation*}
		and by plugging it into the above we get
		\begin{equation*}
		|F^{\varphi}|^2=2R^{\varphi}_{ij,k}C^{\varphi}_{ijk}+\frac{1}{m-1}(S^{\varphi}_kR^{\varphi}_{ii,k}-S^{\varphi}_jR^{\varphi}_{ij,i}).
		\end{equation*}
		Using once again the definition of $\varphi$-Schouten \eqref{def of phi schouten} combined with \eqref{div of phi Ricci} from the above we infer
		\begin{equation*}
		|F^{\varphi}|^2=2A^{\varphi}_{ij,k}C^{\varphi}_{ijk}+\frac{1}{m-1}S^{\varphi}_kC^{\varphi}_{iik}+\frac{1}{m-1}|\nabla S^{\varphi}|^2-\frac{1}{m-1}S^{\varphi}_j\left(\frac{1}{2}S^{\varphi}_j-\alpha\varphi^a_{ii}\varphi^a_j\right).
		\end{equation*}
		Now, using \eqref{traccia phi cotton} and
		\begin{equation*}
		|C^{\varphi}|^2=2C^{\varphi}_{ij,k}A^{\varphi}_{ij,k},
		\end{equation*}
		from the above we easily get
		\begin{equation*}
		|F^{\varphi}|^2=|C^{\varphi}|^2+\frac{2\alpha}{m-1}S^{\varphi}_j\varphi^a_{ii}\varphi^a_j+\frac{1}{2(m-1)}|\nabla S^{\varphi}|^2.
		\end{equation*}
		Then \eqref{norma di F quadro relazionata a norma di C phi} follows, recalling the validity of \eqref{div tensore en stress}.
	\end{proof}
	
	\begin{rmk}
		From \eqref{norma di F quadro relazionata a norma di C phi} we deduce
		\begin{itemize}
			\item[i)] If $A^{\varphi}$ is a Codazzi tensor, i.e., $C^{\varphi}=0$, we have that $\varphi$ is conservative (since the trace of $\varphi$-Cotton is a constant multiple of the divergence of $T$) and thus from \eqref{norma di F quadro relazionata a norma di C phi}
			\begin{equation}\label{norma F phi quadro con C phi zero}
			|F^{\varphi}|^2=\frac{1}{2(m-1)}|\nabla S^{\varphi}|^2.
			\end{equation}
			\item[ii)] If $\mbox{Ric}^{\varphi}$ is a Codazzi tensor, i.e., if $F^{\varphi}=0$, we have from \eqref{norma di F quadro relazionata a norma di C phi} that $S^{\varphi}$ is constant and $A^{\varphi}$ is Codazzi.
		\end{itemize}
	\end{rmk}
	
	\begin{rmk}
		Assume that $\varphi$ is a constant map. We have that
		\begin{equation}\label{F phi in corrispondenza a riem phi cost}
		F^{\varphi}=-\mbox{div}(\mbox{Riem}),
		\end{equation}
		indeed the second Bianchi identity \eqref{second Bianchi identity} implies
		\begin{equation*}
		R_{tikj,t}=R_{ij,k}-R_{ik,j}.
		\end{equation*}
		
		Moreover, it is well known that
		\begin{equation*}
		\mbox{div}(W)=-\frac{m-3}{m-2}C,
		\end{equation*}
		see for instance $(1.87)$ of \cite{AMR}, hence when $m\geq 4$
		\begin{equation}\label{C phi in corrispondenza a weyl phi cost}
		C^{\varphi}=-\frac{m-2}{m-3}\mbox{div}(W).
		\end{equation}
		
		Relations \eqref{F phi in corrispondenza a riem phi cost} and \eqref{C phi in corrispondenza a weyl phi cost} shows how $F^{\varphi}=0$ and $C^{\varphi}=0$, in case $\varphi:M\to N$ is a non-constant smooth map, generalize the notions of harmonic curvature and harmonic Weyl tensor, respectively.
	\end{rmk}

	\begin{dfn}
		Let $(M,g)$ be a Riemannian manifold of dimension $m\geq 2$, $\alpha$ a positive constant and $\varphi:M\to N$ a smooth map, where $(N,\langle\,,\,\rangle_N)$ is a target Riemannian manifold. We say that $(M,g)$ has {\em parallel $\varphi$-Ricci tensor} if $\nabla \mbox{Ric}^{\varphi}=0$.
	\end{dfn}

	\begin{rmk}\label{rmk phi ricci parallelo}
		Assume $(M,g)$ has parallel $\varphi$-Ricci tensor, for some smooth map $\varphi:M\to N$, where $(N,\langle\,,\,\rangle_N)$ is a Riemannian manifold, and some real constant $\alpha\neq 0$. Then the $\varphi$-scalar curvature is constant and $\varphi$ is conservative. Indeed, since the $\varphi$-scalar curvature is the trace of the $\varphi$-Riemann tensor, one has
		\begin{equation*}
		S^{\varphi}_i=R^{\varphi}_{jj,i}=0,
		\end{equation*}
		and since $M$ is connected then $S^{\varphi}$ is constant on $M$. Then, using \eqref{div of phi Ricci} and the constancy of the $\varphi$-scalar curvature,
		\begin{equation*}
		0=R^{\varphi}_{ij,j}=\frac{1}{2}S^{\varphi}_i-\alpha\varphi^a_{jj}\varphi^a_i=-\alpha\varphi^a_{jj}\varphi^a_i,
		\end{equation*}
		and thus $\varphi$ is conservative.
	\end{rmk}
	
	\begin{rmk}\label{rmk soliton con phi conser in realta e arm}
		Let $\varphi:(M,g)\to (N,\langle\,,\,\rangle_N)$ be a conservative map between two Riemannian manifolds. Assume
		\begin{equation*}
		\tau(\varphi)=d\varphi(X)
		\end{equation*}
		for some $X\in\mathfrak{X}(M)$. Then, using \eqref{div tensore en stress}, we get
		\begin{equation*}
		|\tau(\varphi)|^2=\mbox{div}(T)(X)=0,
		\end{equation*}
		that is, $\varphi$ is harmonic.
	\end{rmk}
	\begin{rmk}\label{rmk soliton con phi ricci parallelo}
		Assume that a harmonic-Ricci soliton of dimension $m\geq 3$ has parallel $\varphi$-Ricci tensor. Then, as seen in \hyperref[rmk phi ricci parallelo]{Remark \ref*{rmk phi ricci parallelo}}, the $\varphi$-scalar curvature is constant and $\varphi$ is conservative and thus, from \hyperref[rmk soliton con phi conser in realta e arm]{Remark \ref*{rmk soliton con phi conser in realta e arm}} we have that $\varphi$ is harmonic. Then the components of $\varphi$-Bach are given by
		\begin{equation}\label{phi bach con phi ricci parallelo}
		(m-2)B^{\varphi}_{ij}=W^{\varphi}_{tikj}R^{\varphi}_{tk}-\alpha R^{\varphi}_{kj}\varphi^a_k\varphi^a_i.
		\end{equation}
		Indeed, $S^{\varphi}$ is constant $\nabla A^{\varphi}=\nabla \mbox{Ric}^{\varphi}=0$ and thus $C^{\varphi}=0$. Using also that $\varphi$ is harmonic from \eqref{def phi bach} we immediately get \eqref{phi bach con phi ricci parallelo}. Furthermore, since $S^{\varphi}$ is constant and $\varphi$ is harmonic (recall that a harmonic map is also bi-harmonic) in \eqref{def di J}, we also get
		\begin{equation}\label{J con phi ricci parallelo}
		J^a=-2R^{\varphi}_{jk}\varphi^a_{jk}.
		\end{equation}
	\end{rmk}	
	
	In the next Proposition we collect a list of useful formulas for gradient harmonic-Ricci solitons. Some of them are not new (see for instance \cite{A}), but we provide their proof for the reader convenience. This is not the case for \eqref{f laplacian phi ricci per solitone grad} (or equivalently, \eqref{f laplacian phi ricci per solitone grad scritt con hess f}), the most difficult to obtain, in terms of computation.
	\begin{prop}
		Let $(M,g)$ be a gradient harmonic-Ricci soliton with respect to $\varphi:M\to N$ smooth, where $(N,\langle\,,\,\rangle_N)$ is a target Riemannian manifold, $\alpha>0$ and $\lambda\in \mathbb{R}$. Then, the following formulas holds.
		\begin{equation}\label{form commutazione der covariante per ricci phi in solitoni gradiente}
		F^{\varphi}_{ijk}=R^{\varphi}_{ij,k}-R^{\varphi}_{ik,j}=R_{tikj}f_t;
		\end{equation}
		\begin{equation}\label{form gradiente phi curv scalare in solitoni gradiente}
		\frac{1}{2}S^{\varphi}_i=R^{\varphi}_{ij}f_j;
		\end{equation}
		\begin{equation}\label{div riem per solioni harm gradi}
		\mbox{div}(\mbox{Riem})_{ikj}=R_{tikj,t}=R_{tikj}f_t+\alpha(\varphi_{ik}\varphi^a_j-\varphi^a_{ij}\varphi^a_k);
		\end{equation}
		\begin{equation}\label{div riem con esponeziale caso compatto}
		(R_{tikj}e^{-f})_t=(\mbox{div}(\mbox{Riem})_{ikj}-f_tR_{tikj})e^{-f}=\alpha(\varphi_{ik}\varphi^a_j-\varphi^a_{ij}\varphi^a_k)e^{-f};
		\end{equation}
		\begin{equation}\label{norma F quadro caso compatto}
		f_tR_{tikj}R^{\varphi}_{ij,k}=\frac{1}{2}|F^{\varphi}|^2;
		\end{equation} 
		\begin{equation}\label{f laplacian phi ricci per solitone grad}
		\frac{1}{2}\Delta_fR^{\varphi}_{ij}=\lambda R^{\varphi}_{ij}+R_{tijk}R^{\varphi}_{tk}+\frac{\alpha}{2}\varphi^a_k(R^{\varphi}_{kj}\varphi^a_i+\varphi^a_jR^{\varphi}_{ik})- \alpha\varphi^a_{kk}\varphi^a_{ij};
		\end{equation}
		\begin{equation}\label{f laplacian phi ricci per solitone grad scritt con hess f}
		\frac{1}{2}\Delta_fR^{\varphi}_{ij}=-R_{tijk}f_{tk}-\frac{\alpha}{2}\varphi^a_k(f_{kj}\varphi^a_i+\varphi^a_jf_{ik})- \alpha\varphi^a_{kk}\varphi^a_{ij};
		\end{equation}
		\begin{equation}\label{f laplacian phi scalar curvature}
		\frac{1}{2}\Delta_fS^{\varphi}=\lambda S^{\varphi}-|\mbox{Ric}^{\varphi}|^2- \alpha|\tau(\varphi)|^2.
		\end{equation}
	\end{prop}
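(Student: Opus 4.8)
The plan is to establish the eight identities in sequence, deriving the later ones from the earlier, with \eqref{f laplacian phi ricci per solitone grad} as the single genuinely demanding computation. Throughout I would use the two soliton equations \eqref{grad harm ricci soliton non intro} in components, namely $R^{\varphi}_{ij}=\lambda\delta_{ij}-f_{ij}$ and $\varphi^a_{jj}=\varphi^a_jf_j$. For \eqref{form commutazione der covariante per ricci phi in solitoni gradiente}, differentiating the first soliton equation gives $R^{\varphi}_{ij,k}=-f_{ijk}$, so by \eqref{def di F phi} one has $F^{\varphi}_{ijk}=f_{ikj}-f_{ijk}$; the commutation rule \eqref{comm rule der terza funzione} together with the antisymmetry $R_{tijk}=-R_{tikj}$ then yields $F^{\varphi}_{ijk}=R_{tikj}f_t$. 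For \eqref{form gradiente phi curv scalare in solitoni gradiente}, contracting the differentiated soliton equation gives $R^{\varphi}_{ij,j}=-f_{ijj}$; commuting $f_{ijj}$ by \eqref{comm rule der terza funzione} produces $(\Delta f)_i+R_{it}f_t$, and since $\Delta f=m\lambda-S^{\varphi}$ one finds $R^{\varphi}_{ij,j}=S^{\varphi}_i-R_{it}f_t$. Comparing with the Schur identity \eqref{div of phi Ricci}, replacing $R_{it}$ by $R^{\varphi}_{it}$ through \eqref{def phi Ricci}, and using $\varphi^a_jf_j=\varphi^a_{jj}$ leaves exactly $\frac{1}{2}S^{\varphi}_i=R^{\varphi}_{ij}f_j$.

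The next three are quick consequences. For \eqref{div riem per solioni harm gradi} I would start from the purely metric contracted second Bianchi identity $R_{tikj,t}=R_{ij,k}-R_{ik,j}$, a consequence of \eqref{second Bianchi identity}, decompose $R_{ij}=R^{\varphi}_{ij}+\alpha\varphi^a_i\varphi^a_j$ via \eqref{def phi Ricci}, and expand the derivatives; the symmetry $\varphi^a_{jk}=\varphi^a_{kj}$ of the second fundamental tensor kills the cross terms $\varphi^a_i\varphi^a_{jk}$, and the residual $F^{\varphi}_{ijk}$ is replaced by $R_{tikj}f_t$ through \eqref{form commutazione der covariante per ricci phi in solitoni gradiente}. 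Identity \eqref{div riem con esponeziale caso compatto} is then \eqref{div riem per solioni harm gradi} multiplied by $e^{-f}$ and rewritten with the product rule, while \eqref{norma F quadro caso compatto} follows immediately by contracting \eqref{form commutazione der covariante per ricci phi in solitoni gradiente} with $R^{\varphi}_{ij,k}$ and recalling $|F^{\varphi}|^2=2F^{\varphi}_{ijk}R^{\varphi}_{ij,k}$ from \eqref{norma di F phi}. The scalar identity \eqref{f laplacian phi scalar curvature} I would obtain by differentiating \eqref{form gradiente phi curv scalare in solitoni gradiente} once more and tracing, so that $\frac{1}{2}S^{\varphi}_{ii}=R^{\varphi}_{ij,i}f_j+R^{\varphi}_{ij}f_{ij}$; the divergence in the first term is handled by \eqref{div of phi Ricci} and $\varphi^a_jf_j=\varphi^a_{jj}$, giving $\frac{1}{2}f_jS^{\varphi}_j-\alpha|\tau(\varphi)|^2$, while in the second term the substitution $f_{ij}=\lambda\delta_{ij}-R^{\varphi}_{ij}$ gives $\lambda S^{\varphi}-|\mbox{Ric}^{\varphi}|^2$. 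Recognizing $\Delta_fS^{\varphi}=\Delta S^{\varphi}-f_jS^{\varphi}_j$ then yields \eqref{f laplacian phi scalar curvature}.

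The core of the proposition, and the step I expect to be by far the most laborious, is \eqref{f laplacian phi ricci per solitone grad}. The plan is to differentiate the commutation identity \eqref{form commutazione der covariante per ricci phi in solitoni gradiente} in the index $k$ and contract, so that the left-hand side becomes $R^{\varphi}_{ij,kk}-R^{\varphi}_{ik,jk}$ and the right-hand side is $R_{tikj,k}f_t+R_{tikj}f_{tk}$. On the left I would commute the derivatives in $R^{\varphi}_{ik,jk}$ using \eqref{comm rule two times cov tensor field}, reducing it to the derivative of the divergence $R^{\varphi}_{ik,k}$ plus Riemann-curvature terms, and evaluate $R^{\varphi}_{ik,k}$ by the Schur identity \eqref{div of phi Ricci}. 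On the right I would rewrite the contracted derivative $R_{tikj,k}$ by the contracted second Bianchi identity and substitute $f_{tk}=\lambda\delta_{tk}-R^{\varphi}_{tk}$. The only third-order term in $\varphi$ produced in this process is $\varphi^a_{kkj}=\tau(\varphi)^a_{,j}$, which the second soliton equation $\tau(\varphi)^a=\varphi^a_kf_k$ turns into first derivatives of $d\varphi$ and of $f$; this is precisely why no curvature term of the target $N$ ever enters, and it is the feature that keeps the final right-hand side comparatively clean. The genuine difficulty is the sheer volume of bookkeeping: repeatedly converting between $R_{ij}$ and $R^{\varphi}_{ij}$ through \eqref{def phi Ricci}, invoking $\frac{1}{2}S^{\varphi}_i=R^{\varphi}_{ij}f_j$ from \eqref{form gradiente phi curv scalare in solitoni gradiente}, and collecting the many $\varphi$-contributions into the compact combination $\frac{\alpha}{2}\varphi^a_k(R^{\varphi}_{kj}\varphi^a_i+\varphi^a_jR^{\varphi}_{ik})-\alpha\varphi^a_{kk}\varphi^a_{ij}$.

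Finally, \eqref{f laplacian phi ricci per solitone grad scritt con hess f} requires no further differentiation: it is the algebraic rewriting of \eqref{f laplacian phi ricci per solitone grad} obtained by substituting $R^{\varphi}_{tk}=\lambda\delta_{tk}-f_{tk}$, and likewise $R^{\varphi}_{kj}$ and $R^{\varphi}_{ik}$, everywhere on the right-hand side. Using $R_{tijk}\delta_{tk}=R_{tijt}=-R_{ij}$ one checks that the terms $\lambda R^{\varphi}_{ij}$, $-\lambda R_{ij}$ and the $\lambda$-part $\alpha\lambda\varphi^a_i\varphi^a_j$ of the $\varphi$-terms combine, through $R^{\varphi}_{ij}=R_{ij}-\alpha\varphi^a_i\varphi^a_j$ of \eqref{def phi Ricci}, to cancel exactly, leaving precisely \eqref{f laplacian phi ricci per solitone grad scritt con hess f}.
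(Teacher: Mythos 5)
Your proposal is correct and takes essentially the same route as the paper: the first five identities are obtained exactly as in the text, and your core computation for \eqref{f laplacian phi ricci per solitone grad} --- differentiating and contracting \eqref{form commutazione der covariante per ricci phi in solitoni gradiente}, commuting derivatives via \eqref{comm rule two times cov tensor field}, applying Schur's identity \eqref{div of phi Ricci}, the divergence formula \eqref{div riem per solioni harm gradi} and the soliton equations --- is precisely how the paper derives its intermediate formula for $R^{\varphi}_{ij,kk}$, with the radial term $f_kR^{\varphi}_{ij,k}$ (which the paper isolates as a separate companion identity before subtracting) recovered in your plan by invoking the differentiated \eqref{form gradiente phi curv scalare in solitoni gradiente}. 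The only minor variations are that you derive \eqref{form gradiente phi curv scalare in solitoni gradiente} by contracting the differentiated soliton equation rather than tracing \eqref{form commutazione der covariante per ricci phi in solitoni gradiente}, and \eqref{f laplacian phi scalar curvature} by differentiating and tracing \eqref{form gradiente phi curv scalare in solitoni gradiente} rather than tracing \eqref{f laplacian phi ricci per solitone grad}; both shortcuts are valid and equivalent to the paper's steps.
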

	\begin{proof}
		In a local orthonormal coframe the harmonic-Ricci soliton equations are given by
		\begin{equation}\label{gradient harm ricci soliton in a local orth coframe}
		\begin{dcases}
		R^{\varphi}_{ij}+f_{ij}=\lambda \delta_{ij}\\
		\varphi^a_{ii}=\varphi^a_if_i.
		\end{dcases}
		\end{equation}
		
		Taking the covariant derivative of the first equation of \eqref{gradient harm ricci soliton in a local orth coframe} we get
		\begin{equation*}
		R^{\varphi}_{ij,k}+f_{ijk}=0.
		\end{equation*}
		Skew-symmetrizing the above with respect to the indexes $j$ and $k$, using the commutation rule \eqref{comm rule der terza funzione} and recalling the definition \eqref{def di F phi} of $F^{\varphi}$ we infer the validity of \eqref{form commutazione der covariante per ricci phi in solitoni gradiente}.
		
		Summing on $i=j$ the  relation \eqref{form commutazione der covariante per ricci phi in solitoni gradiente} we obtain
		\begin{equation*}
		S^{\varphi}_k-\frac{1}{2}S^{\varphi}_k+\alpha \varphi^a_{ii}\varphi^a_k=R_{tk}f_t,
		\end{equation*}
		that is \eqref{form gradiente phi curv scalare in solitoni gradiente}, using the second equation of \eqref{gradient harm ricci soliton in a local orth coframe} and recalling that, by definition, $R^{\varphi}_{ij}=R_{ij}-\alpha \varphi^a_i\varphi^a_j$.
		
		The second Bianchi identity \eqref{second Bianchi identity} with $l=t$ gives
		\begin{equation*}
		0=R_{tikj,t}+R_{tijt,k}+R_{titk,j}=\mbox{div}(\mbox{Riem})_{ikj}-R_{ij,k}+R_{ik,j},
		\end{equation*}
		hence
		\begin{equation*}
		\mbox{div}(\mbox{Riem})_{ikj}=R_{ij,k}-R_{ik,j}.
		\end{equation*}
		Using the definition of $\varphi$-Ricci the above gives
		\begin{equation*}
		\mbox{div}(\mbox{Riem})_{ikj}=R^{\varphi}_{ij,k}-R^{\varphi}_{ik,j}+\alpha(\varphi_{ik}\varphi^a_j-\varphi^a_{ij}\varphi^a_k),
		\end{equation*}
		and thus \eqref{div riem per solioni harm gradi} follows with the aid of \eqref{form commutazione der covariante per ricci phi in solitoni gradiente}.
		
		From \eqref{div riem per solioni harm gradi} it is immediate to obtain \eqref{div riem con esponeziale caso compatto}.
		
		Using \eqref{form commutazione der covariante per ricci phi in solitoni gradiente} we get $R_{tikj}f_t=F^{\varphi}_{ijk}$, hence recalling the validity of \eqref{norma di F phi} we have
		\begin{equation*}
		f_tR_{tikj}R^{\varphi}_{ij,k}=F^{\varphi}_{ijk}R^{\varphi}_{ij,k}=\frac{1}{2}|F^{\varphi}|^2,
		\end{equation*} 
		that is, \eqref{norma F quadro caso compatto}.

		Now we prove the validity of \eqref{f laplacian phi ricci per solitone grad}. First of all, the following hold
		\begin{equation}\label{f gradiente di phi ricci in proof}
		R^{\varphi}_{ij,k}f_k=R_{tikj}f_tf_k+\frac{1}{2}S^{\varphi}_{ij}+R^{\varphi}_{ik}R^{\varphi}_{kj}-\lambda R^{\varphi}_{ij}.
		\end{equation}
		Indeed
		\begin{equation*}
			R^{\varphi}_{ij,k}f_k=(R^{\varphi}_{ij,k}-R^{\varphi}_{ik,j})f_k+R^{\varphi}_{ik,j}f_k=(R^{\varphi}_{ij,k}-R^{\varphi}_{ik,j})f_k+(R^{\varphi}_{ik}f_k)_j-R^{\varphi}_{ik}f_{kj},
		\end{equation*}
		and then \eqref{f gradiente di phi ricci in proof} follows immediately using \eqref{form commutazione der covariante per ricci phi in solitoni gradiente}, \eqref{form gradiente phi curv scalare in solitoni gradiente} and the first equation of \eqref{gradient harm ricci soliton in a local orth coframe} as follows
		\begin{align*}
		R^{\varphi}_{ij,k}f_k=&(R^{\varphi}_{ij,k}-R^{\varphi}_{ik,j})f_k+(R^{\varphi}_{ik}f_k)_j-R^{\varphi}_{ik}f_{kj}\\
		=&R_{tikj}f_tf_k+\left(\frac{1}{2}S^{\varphi}_i\right)_j-R^{\varphi}_{ik}(-R^{\varphi}_{kj}+\lambda \delta_{kj})\\
		=&R_{tikj}f_tf_k+\frac{1}{2}S^{\varphi}_{ij}+R^{\varphi}_{ik}R^{\varphi}_{kj}-\lambda R^{\varphi}_{ij}.
		\end{align*}
		Next, we claim the validity of
		\begin{equation}\label{laplaciano di phi ricci in proof}
		R^{\varphi}_{ij,kk}=R_{kjti}f_kf_t-2R_{tikj}R^{\varphi}_{tk}+\lambda R^{\varphi}_{ij}+\frac{1}{2}S^{\varphi}_{ij}+R^{\varphi}_{kj}R^{\varphi}_{ik}+\alpha\varphi^a_k(\varphi^a_jR^{\varphi}_{ik}+\varphi^a_iR^{\varphi}_{kj})-2\alpha \varphi^a_{kk}\varphi^a_{ij}.
		\end{equation}
		To prove the claim notice that, using \eqref{form commutazione der covariante per ricci phi in solitoni gradiente}, the commutation rule \eqref{comm rule two times cov tensor field} and Schur's lemma \eqref{div of phi Ricci},	
		\begin{align*}
			R^{\varphi}_{ij,kk}=&(R^{\varphi}_{ij,k}-R^{\varphi}_{ik,j})_k+R^{\varphi}_{ik,jk}\\
			=&(R_{tikj}f_t)_k+R^{\varphi}_{ik,kj}+R^t_{ijk}R^{\varphi}_{tk}+R^t_{kjk}R^{\varphi}_{it}\\
			=&R_{tikj,k}f_t+R_{tikj}f_{tk}+\left(\frac{1}{2}S^{\varphi}_i-\alpha \varphi^a_{kk}\varphi^a_i\right)_j+R_{tijk}R^{\varphi}_{tk}+R_{tj}R^{\varphi}_{it}.
		\end{align*}
		Then, using the definition of $\varphi$-Ricci and the symmetries of the Riemann tensor from the above we get
		\begin{align*}
		R^{\varphi}_{ij,kk}=&\mbox{div}(\mbox{Riem})_{jki}f_k+R_{tikj}f_{tk}+\frac{1}{2}S^{\varphi}_{ij}-\alpha (\varphi^a_{kkj}\varphi^a_i+\varphi^a_{kk}\varphi^a_{ij})+R_{tijk}R^{\varphi}_{tk}+R^{\varphi}_{kj}R^{\varphi}_{ik}+\alpha R^{\varphi}_{ik}\varphi^a_i\varphi^a_j .
		\end{align*}
		Now we plug \eqref{div riem per solioni harm gradi} into the above obtaining
		\begin{equation}\label{prima scritt per laplaciano phi ricci}
		\begin{aligned}
		R^{\varphi}_{ij,kk}=&R_{tjki}f_tf_k+\alpha(\varphi_{jk}f_k\varphi^a_i-\varphi^a_{ij}\varphi^a_kf_k)+R_{tikj}f_{tk}+\frac{1}{2}S^{\varphi}_{ij}\\
		&+\alpha (R^{\varphi}_{ik}\varphi^a_i\varphi^a_j-\varphi^a_{kkj}\varphi^a_i-\varphi^a_{kk}\varphi^a_{ij})+R_{tijk}R^{\varphi}_{tk}+R^{\varphi}_{kj}R^{\varphi}_{ik}.
		\end{aligned}
		\end{equation}
		Notice that the from the second equation of \eqref{gradient harm ricci soliton in a local orth coframe} we have
		\begin{equation*}
			\varphi^a_{kkj}=(\varphi^a_kf_k)_j=\varphi^a_{kj}f_k+\varphi^a_kf_{kj}.
		\end{equation*}
		Using the above and both the equations of \eqref{gradient harm ricci soliton in a local orth coframe} from \eqref{prima scritt per laplaciano phi ricci}, after a few simplifications, we infer
		\begin{align*}
		R^{\varphi}_{ij,kk}=&R_{tjki}f_tf_k-2\alpha\varphi^a_{ij}\varphi^a_{kk}-R_{tikj}R^{\varphi}_{tk}+\lambda R_{ij}+\frac{1}{2}S^{\varphi}_{ij}\\
		&+\alpha (R^{\varphi}_{ik}\varphi^a_i\varphi^a_j-\varphi^a_kf_{kj}\varphi^a_i)+R_{tijk}R^{\varphi}_{tk}+R^{\varphi}_{kj}R^{\varphi}_{ik},
		\end{align*}
		so that, using once again the symmetries of the Riemann tensor, the first equation of \eqref{gradient harm ricci soliton in a local orth coframe} and the definition of $\varphi$-Ricci we obtain the claimed equality \eqref{laplaciano di phi ricci in proof}.
		
		The validity of \eqref{f laplacian phi ricci per solitone grad} follows immediately from the definition $\Delta_fR^{\varphi}_{ij}=R^{\varphi}_{ij,kk}-R^{\varphi}_{ij,k}f_k$, using \eqref{laplaciano di phi ricci in proof} and \eqref{f gradiente di phi ricci in proof}.
		
		To get \eqref{f laplacian phi ricci per solitone grad scritt con hess f} it is sufficient to use the first equation of \eqref{gradient harm ricci soliton in a local orth coframe} and the definition of $\varphi$-Ricci into \eqref{f laplacian phi ricci per solitone grad}.
		
		Tracing \eqref{f laplacian phi ricci per solitone grad} we get
		\begin{equation*}
			\frac{1}{2}\Delta_fS^{\varphi}=\lambda S^{\varphi}-R_{tk}R^{\varphi}_{tk}+\alpha\varphi^a_kR^{\varphi}_{ki}\varphi^a_i- \alpha|\tau(\varphi)|^2,
		\end{equation*}
		that is, using the definition of the $\varphi$-Ricci tensor, \eqref{f laplacian phi scalar curvature}.
	\end{proof}

	\section{Rigidity}\label{section 4}
	
	Following what P. Petersen and W. Wylie did in \cite{PW}, we give the following
	\begin{dfn}\label{def rigidita}
		Let $(M,g)$ be a gradient harmonic-Ricci soliton with respect to a positive constant $\alpha$, a smooth map $\varphi:M\to N$, where $(N,\langle\,,\,\rangle_N)$ is a Riemannian manifold, $f\in\mathcal{C}^{\infty}(M)$ and $\lambda\in\mathbb{R}$. We say that $(M,g)$ is {\em rigid} if it is isometric to a quotient of the Riemannian product $L\times \mathbb{R}^k$, where
		\begin{itemize}
			\item[$i)$] $0\leq k\leq m$ is an integer and $\mathbb{R}^k$ is endowed with the canonical metric $g_{\mbox{can}}$,
			\item[$ii)$] $(L,g_L)$ is a harmonic-Einstein manifold with respect to $\alpha$, a smooth map $\varphi_L:L\to N$ and $\lambda\in\mathbb{R}$;
			\item[$iii)$] Via the isometry $\varphi$ is given the lifting of $\varphi_L$ and $f$ by the lifting of the map $f_{\mathbb{R}^k}(x)=\frac{\lambda}{2}|x|^2+\langle b,x\rangle +c$, for some $c\in\mathbb{R}$ and $b\in\mathbb{R}^k$, i.e., via the isometry $\varphi=\varphi_L\circ \pi_L$ and $f=f_{\mathbb{R}^k}\circ \pi_{\mathbb{R}^k}$, where $\pi_L:L\times \mathbb{R}^k\to L$ and $\pi_{\mathbb{R}^k}:L\times \mathbb{R}^k\to \mathbb{R}^k$ are the canonical projections.
		\end{itemize}
	\end{dfn}
	\begin{rmk}
		In the above:
		\begin{itemize}
			\item[i)] When $k=0$ what we get is that $(M,g)$ is isometric to a quotient of the harmonic-Einstein manifold $(L,g_L)$ and, via the isometry, $\varphi=\varphi_L\circ \pi_L$ and the potential is constant;
			\item[ii)] When $k=m$ what we get is that $(M,g)$ is isometric to a quotient of $(\mathbb{R}^m,g_{\mbox{can}})$ and, via the isometry, $f=f_{\mathbb{R}^k}\circ \pi_{\mathbb{R}^k}$ and $\varphi$ is constant.
		\end{itemize}
	\end{rmk}

	Clearly a compact gradient harmonic-Ricci soliton is rigid if and only if it is harmonic-Einstein. Motivated by this we extend the notion of rigidity to any (not necessarily gradient) harmonic-Ricci soliton, in the following
	\begin{dfn}\label{def rig compatto}
		Let $(M,g)$ be a compact harmonic-Ricci soliton with respect to a positive constant $\alpha$, a smooth map $\varphi:M\to N$, where $(N,\langle\,,\,\rangle_N)$ is a Riemannian manifold, $X\in\mathfrak{X}(M)$ and $\lambda\in\mathbb{R}$. We say that $(M,g)$ is {\em rigid} if it is harmonic-Einstein with respect to $\alpha$, $\varphi$ and $\lambda$.
	\end{dfn}
	
	We will deal to the characterization of rigidity of compact harmonic-Ricci soliton in \hyperref[section solitoni compatti]{Section \ref*{section solitoni compatti}} and thus, for the rest of the Section, we will focus on complete non-compact gradient solitons.

	The aim of the following Proposition is to justify \hyperref[def rigidita]{Definition \ref*{def rigidita}}, showing that the Riemannian products $L\times \mathbb{R}^k$ described in \hyperref[def rigidita]{Definition \ref*{def rigidita}} (for $1\leq k\leq m-1$) are actually harmonic-Ricci soliton. We will also study some of their geometric properties.  
	\begin{prop}\label{prop che descrive modello rigido}
		Let $(M,g)$ a harmonic-Einstein manifold of dimension $m\geq 2$ with respect to a positive constant $\alpha$, a smooth map $\varphi:M\to N$, where $(N,\langle\,,\,\rangle_N)$ is a target Riemannian manifold, and $\lambda\in\mathbb{R}$. Let $k\geq 1$ be an integer and consider on the Euclidean space $\mathbb{R}^k$, endowed with its canonical metric $g_{\mbox{can}}$, the function
		\begin{equation*}
		f(x):=\frac{\lambda}{2}|x|^2+\langle b,x\rangle+c \quad \mbox{ for every } x\in\mathbb{R}^k,
		\end{equation*}
		where $b\in\mathbb{R}^k$ and $c\in\mathbb{R}$. Consider the Riemannian product $\bar{M}:=M\times \mathbb{R}^k$, with Riemannian metric
		\begin{equation*}
		\bar{g}=\pi_M^*g+\pi_{\mathbb{R}^k}^*g_{\mbox{can}}\equiv g+g_{\mbox{can}},
		\end{equation*}
		where $\pi_M:\bar{M}\to M$ and $\pi_{\mathbb{R}^k}:\bar{M}\to \mathbb{R}^k$ are the canonical projections. Denote by $\bar{f}$ and $\bar{\varphi}$ the lifting of $f$ and $\varphi$, respectively, to $\bar{M}$:
		\begin{equation}\label{def di f bar e phi bar}
		\bar{f}:=f\circ \pi_{\mathbb{R}^k}\in\mathcal{C}^{\infty}(\bar{M}), \quad \bar{\varphi}:=\varphi\circ \pi_M:\bar{M}\to N.
		\end{equation}
		Then $(\bar{M},\bar{g})$ is a gradient harmonic-Ricci soliton of dimension $\bar{m}=m+k\geq 3$ with respect to $\alpha$, $\bar{\varphi}$, $\bar{f}$ and $\lambda$, that is,
		\begin{equation}\label{soliton equation for rigid model}
		\begin{cases}
		\overline{\mbox{Ric}}-\alpha \bar{\varphi}^*\langle\,,\,\rangle_N+\overline{\mbox{Hess}}(\bar{f})=\lambda \overline{g}\\
		\bar{\tau}(\bar{\varphi})=\bar{d}\bar{\varphi}(\bar{\nabla} \bar{f})
		\end{cases}
		\end{equation}
		holds. Moreover
		\begin{equation}\label{nabla phi ricci zero per prodotto}
			\bar{\nabla} \overline{\mbox{Ric}}^{\bar{\varphi}}=0,
		\end{equation}
		and, as a consequence, the $\bar{\varphi}$-scalar curvature is constant and $\bar{\varphi}$ is harmonic. Furthermore, 
		\begin{equation}\label{bar phi bach per modello rigid}
		\bar{B}^{\bar{\varphi}}=\frac{(k-1)\lambda^2}{(m+k-1)(m+k-2)^2}(k\pi^*_Mg-m\pi_{\mathbb{R}^k}^*g_{\mbox{can}})
		\end{equation}
		and
		\begin{equation}\label{bar J per modello rigid}
			\bar{J}=0.
		\end{equation}
	\end{prop}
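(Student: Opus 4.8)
The plan is to exploit the product structure of $(\bar M,\bar g)=M\times\mathbb{R}^k$ throughout, computing every relevant tensor block by block relative to the orthogonal splitting $T\bar M=TM\oplus T\mathbb{R}^k$. First I would record the elementary product facts: since $\mathbb{R}^k$ is flat and $f$ is quadratic, $\overline{\mbox{Hess}}(\bar f)$ vanishes on $M$-directions and equals $\lambda\,\pi^*_{\mathbb{R}^k}g_{\mbox{can}}$ on $\mathbb{R}^k$-directions (because $f_{ij}=\lambda\delta_{ij}$ there); the lift $\bar\varphi=\varphi\circ\pi_M$ from \eqref{def di f bar e phi bar} has differential annihilating the $\mathbb{R}^k$-directions, so $\bar\varphi^*\langle\,,\,\rangle_N$ is the lift of $\varphi^*\langle\,,\,\rangle_N$ supported on the $M$-block; and $\overline{\mbox{Ric}}$, $\overline{\mbox{Riem}}$ are the lifts of $\mbox{Ric}$, $\mbox{Riem}$ on the $M$-block and vanish as soon as one index is tangent to $\mathbb{R}^k$. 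Combining these with the harmonic-Einstein hypothesis \eqref{harm eoinst non intro} gives at once $\overline{\mbox{Ric}}^{\bar\varphi}=\lambda\,\pi_M^*g$.

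From here the soliton equations \eqref{soliton equation for rigid model} are immediate: adding $\overline{\mbox{Ric}}^{\bar\varphi}=\lambda\pi_M^*g$ and $\overline{\mbox{Hess}}(\bar f)=\lambda\pi^*_{\mathbb{R}^k}g_{\mbox{can}}$ recovers $\lambda\bar g$, while the second equation holds because $\bar\nabla\bar f$ is tangent to $\mathbb{R}^k$ (so $\bar d\bar\varphi(\bar\nabla\bar f)=0$) and $\bar\tau(\bar\varphi)$ is the lift of $\tau(\varphi)=0$. For \eqref{nabla phi ricci zero per prodotto} I would use that the two factor distributions of a Riemannian product are parallel, whence the projection tensor $\pi_M^*g$ is $\bar\nabla$-parallel and so is $\overline{\mbox{Ric}}^{\bar\varphi}=\lambda\pi_M^*g$. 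Tracing gives $\bar S^{\bar\varphi}=m\lambda$, constant; harmonicity of $\bar\varphi$ then follows either directly (it is the lift of the harmonic map $\varphi$) or from Remark \ref{rmk phi ricci parallelo} and Remark \ref{rmk soliton con phi conser in realta e arm}.

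The substantial step is \eqref{bar phi bach per modello rigid}. Since $\bar m=m+k\geq 3$ and the soliton has parallel $\bar\varphi$-Ricci, Remark \ref{rmk soliton con phi ricci parallelo} applies and gives $(\bar m-2)\bar B^{\bar\varphi}_{ij}=\bar W^{\bar\varphi}_{tikj}\bar R^{\bar\varphi}_{tk}-\alpha\bar R^{\bar\varphi}_{kj}\bar\varphi^a_k\bar\varphi^a_i$. Using $\bar R^{\bar\varphi}_{tk}=\lambda(\pi_M^*g)_{tk}$ I would rewrite the two terms as $\lambda\sum_{p\in M}\bar W^{\bar\varphi}_{pipj}$ and $\alpha\lambda\,\bar\varphi^a_i\bar\varphi^a_j$. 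The key observation is that the partial trace over the $M$-block can be replaced, via the full-trace identity \eqref{traccia phi weyl}, by $\alpha\bar\varphi^a_i\bar\varphi^a_j$ minus the partial trace over the $\mathbb{R}^k$-block; the two map-dependent terms then cancel and one is left with $(\bar m-2)\bar B^{\bar\varphi}_{ij}=-\lambda\sum_{\mu\in\mathbb{R}^k}\bar W^{\bar\varphi}_{\mu i\mu j}$. I expect this $\mathbb{R}^k$-partial trace to be the main obstacle: it must be computed from \eqref{comp phi weyl}, where the product Riemann term drops out (any $\mathbb{R}^k$-index kills it) and one is left to bookkeep the four $\bar\varphi$-Schouten terms, using that $\bar A^{\bar\varphi}$ is diagonal with the two constant eigenvalues $P=\lambda\frac{m+2k-2}{2(\bar m-1)}$ on the $M$-block and $Q=-\frac{m\lambda}{2(\bar m-1)}$ on the $\mathbb{R}^k$-block, read off from \eqref{def of phi schouten}. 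Evaluating the resulting expression separately on the $M$-, $\mathbb{R}^k$- and mixed blocks, and simplifying $P+Q=\frac{(k-1)\lambda}{\bar m-1}$, should yield precisely the coefficient in \eqref{bar phi bach per modello rigid}.

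Finally, for \eqref{bar J per modello rigid} I would again invoke Remark \ref{rmk soliton con phi ricci parallelo}, which here gives $\bar J^a=-2\bar R^{\bar\varphi}_{jk}\bar\varphi^a_{jk}$ as in \eqref{J con phi ricci parallelo}. Since $\bar\varphi^a_{jk}$, the lifted generalized second fundamental form, is supported on the $M$-block and $\bar R^{\bar\varphi}_{jk}=\lambda(\pi_M^*g)_{jk}$, the contraction collapses to $\lambda$ times the $M$-trace of $\nabla d\bar\varphi$, which is the lift of $\tau(\varphi)=0$; hence $\bar J=0$.
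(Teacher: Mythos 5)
Your proposal is correct. For the soliton equations, the parallelism of $\overline{\mbox{Ric}}^{\bar{\varphi}}$, and $\bar{J}=0$ you follow essentially the paper's route: the only cosmetic difference is that you deduce $\bar{\nabla}\,\overline{\mbox{Ric}}^{\bar{\varphi}}=0$ from the identity $\overline{\mbox{Ric}}^{\bar{\varphi}}=\lambda\pi_M^*g$ together with the parallelism of the factor distributions, whereas the paper verifies $\bar{\nabla}\,\overline{\mbox{Ric}}^{\bar{\varphi}}=\pi_M^*\nabla\mbox{Ric}^{\varphi}$ by a direct moving-frame computation; both are sound. Where you genuinely diverge is the computation of $\bar{B}^{\bar{\varphi}}$. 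The paper computes all components of $\bar{W}^{\bar{\varphi}}$ from \eqref{comp phi weyl} and then performs the $M$-block partial trace appearing in \eqref{phi bach con phi ricci parallelo}, which forces it to use the harmonic-Einstein condition a second time (the trace of $\bar{W}^{\bar{\varphi}}_{tijk}$ over $M$-indices involves $R_{ik}$) and to watch the cancellation of the $\alpha\varphi^a_i\varphi^a_j$ terms only at the very end. You instead apply the trace identity \eqref{traccia phi weyl} on $(\bar{M},\bar{g},\bar{\varphi})$ — which is legitimate, since it is a general identity for $\varphi$-curvatures — to trade the $M$-block partial trace for the full trace minus the $\mathbb{R}^k$-block partial trace, so that
\begin{equation*}
(m+k-2)\bar{B}^{\bar{\varphi}}_{AB}=\lambda\sum_{p=1}^{m}\bar{W}^{\bar{\varphi}}_{pApB}-\alpha\lambda\bar{\varphi}^a_A\bar{\varphi}^a_B=-\lambda\sum_{\alpha=1}^{k}\bar{W}^{\bar{\varphi}}_{m+\alpha\,A\,m+\alpha\,B},
\end{equation*}
the map-dependent terms cancelling identically. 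The remaining $\mathbb{R}^k$-partial trace contains no Riemann contribution (any $\mathbb{R}^k$-index annihilates $\overline{\mbox{Riem}}$), so it reduces to Kulkarni--Nomizu bookkeeping with the two constant Schouten eigenvalues $P=\lambda\frac{m+2k-2}{2(m+k-1)}$ and $Q=-\frac{m\lambda}{2(m+k-1)}$ that you correctly read off. I have checked this bookkeeping: it gives $-\frac{k(P+Q)}{m+k-2}\delta_{ij}$ on the $M$-block, $0$ on the mixed block, and $-\frac{2(k-1)Q}{m+k-2}\delta_{\alpha\beta}$ on the $\mathbb{R}^k$-block, which with $P+Q=\frac{(k-1)\lambda}{m+k-1}$ reproduces exactly the two coefficients $\frac{k(k-1)\lambda^2}{(m+k-1)(m+k-2)^2}$ and $-\frac{m(k-1)\lambda^2}{(m+k-1)(m+k-2)^2}$ of \eqref{bar phi bach per modello rigid}. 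Your route is shorter and never touches $R_{tijk}$ or the map $\varphi$ in the Bach computation, and it makes transparent why the answer depends only on $m$, $k$, $\lambda$; the price is that you do not record the Weyl components of the product, which the paper's longer computation produces as a by-product.
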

	\begin{proof}
		
		We will use the following indexes conventions
		\begin{equation*}
		1\leq i,j,\ldots \leq m, \quad 1\leq \alpha,\beta,\ldots \leq k, \quad 1\leq A,B,\ldots \leq m+k.
		\end{equation*}
		Let $\{\theta^i\}$ be a local orthonormal coframe for $(M,g)$ on an open subset $\mathcal{U}$ of $M$ and  $\{\psi^{\alpha}\}$ for $(\mathbb{R}^k,g_{\mbox{can}})$ on an open subset $\mathcal{W}$ of $\mathbb{R}^k$. It is easy to see that, by setting
		\begin{equation*}
			\bar{\theta}^i:=\pi_M^*\theta^i, \quad \bar{\theta}^{m+\alpha}:=\pi^*_{\mathbb{R}^k}\psi^{\alpha},
		\end{equation*}
		then $\{\bar{\theta}^A\}$ is a local orthonormal coframe for $\bar{g}$ in $\overline{\mathcal{U}}:=\mathcal{U}\times \mathcal{W}$.  The same applies for the Levi-Civita connections forms, indeed let $\{\theta^i_j\}$ and $\{\psi^{\alpha}_{\beta}\}$ be, respectively, the Levi-Civita connection forms for $(M,g)$ on $\mathcal{U}$ and for $(\mathbb{R}^k,g_{\mbox{can}})$ on $\mathcal{W}$. Then the Levi-Civita connection forms for $(\bar{M},\bar{g})$ on $\overline{\mathcal{U}}$ are given by
		\begin{equation*}
		\bar{\theta}^i_j=\pi_M^*\theta^i_j, \quad \bar{\theta}^i_{m+\alpha}=0, \quad \bar{\theta}^{m+\alpha}_{m+\beta}=\pi^*_{\mathbb{R}^k}\psi^{\alpha}_{\beta}.
		\end{equation*}
		From now on we will omit the pullback from our notation. We collect some well known fact needed in the rest of proof (for a proof see, for instance, Section 4 of \cite{A20} where we deal with semi-Riemannian warped products), that relies on the fact that $\mathbb{R}^k$ is flat and on the definitions \eqref{def di f bar e phi bar} of $\bar{f}$ and $\bar{\varphi}$.
		\begin{itemize}
			\item The non-vanishing components of the Riemann tensor $\overline{\mbox{Riem}}$ of $(\bar{M},\bar{g})$ in the coframe $\{\bar{\theta}^A\}$ are determined by
			\begin{equation}\label{riem tensor per il prodotto comp non nulle}
			\bar{R}_{ijkt}=R_{ijkt},
			\end{equation}
			where $R_{ijkt}$ are the components of the Riemann tensors of $(M,g)$.
			\item The non-vanishing components of $\overline{\mbox{Ric}}$, the Ricci tensor of $(\bar{M},\bar{g})$, in the coframe $\{\bar{\theta}^A\}$ are given by 
			\begin{equation}\label{phi ricci tensor warped prod in terms of u}
			\bar{R}_{ij}=R_{ij},
			\end{equation}
			\item The scalar curvature $\bar{S}$ of $(\bar{M},\bar{g})$ is given by
			\begin{equation}\label{sclar curv del prod riem}
				\bar{S}=S,
			\end{equation}
			where $S$ is the scalar curvature of $(M,g)$.
			\item The components of $\bar{\nabla} \bar{f}$ and of $\overline{\mbox{Hess}}(\bar{f})$ in the coframe $\{\bar{\theta}^A\}$ are given by
			\begin{equation}\label{comp di f in prodotto}
			\bar{f}_i=0, \quad \bar{f}_{m+\alpha}=f_{\alpha}, \quad \bar{f}_{ij}=0, \quad \bar{f}_{i\, m+\alpha}=0, \quad \bar{f}_{ m+\alpha\, m+\beta}=f_{\alpha\beta},
			\end{equation}
			\item The components of $\bar{d}\bar{\varphi}$ and of $\bar{\nabla}\bar{d}\bar{\varphi}$ are given by
			\begin{equation}\label{comp di phi in prodotto}
			\bar{\varphi}^a_i=\varphi^a_i, \quad \bar{\varphi}^a_{m+\alpha}=0, \quad \bar{\varphi}^a_{ij}=\varphi^a_{ij}, \quad \bar{\varphi}^a_{i\, m+\alpha}=0, \quad \bar{\varphi}^a_{m+\alpha\, m+\beta}=0.
			\end{equation}
		\end{itemize}
		
		The equations \eqref{soliton equation for rigid model} in the coframe $\{\bar{\theta}^A\}$ are given by
		\begin{equation}\label{eq soliton prodotto in moving frame}
		\begin{cases}
		\bar{R}_{AB}-\alpha \bar{\varphi}^a_A\bar{\varphi}^a_B+\bar{f}_{AB}=\lambda \delta_{AB}\\
		\bar{\varphi}^a_{AA}=\bar{\varphi}^a_A\bar{f}_A.
		\end{cases}
		\end{equation}
		
		Using the decomposition \eqref{phi ricci tensor warped prod in terms of u} of $\overline{\mbox{Ric}}$, \eqref{comp di phi in prodotto} and \eqref{comp di f in prodotto} the first equation above is equivalent to
		\begin{equation*}
		\begin{dcases}
		R_{ij}-\alpha \varphi^a_i\varphi^a_j=\lambda \delta_{ij}\\
		f_{\alpha \beta}=\lambda \delta_{\alpha \beta}
		\end{dcases}
		\end{equation*}
		while the second is equivalent to
		\begin{equation*}
		\varphi^a_{ii}=0,
		\end{equation*}
		and from our assumption we are able to conclude the validity of \eqref{eq soliton prodotto in moving frame}.
		
		From now on we denote, as usual, by $\bar{R}^{\bar{\varphi}}_{AB}$ the components of $\overline{\mbox{Ric}}^{\bar{\varphi}}=\overline{\mbox{Ric}}-\alpha \bar{\varphi}^*\langle \,,\,\rangle_N$ and by $R^{\varphi}_{ij}$ the components of $\mbox{Ric}^{\varphi}=\mbox{Ric}-\alpha \varphi^*\langle\,,\,\rangle_N$. Combining \eqref{phi ricci tensor warped prod in terms of u} and \eqref{comp di phi in prodotto}, recalling that $(M,g)$ is harmonic-Einstein, the only non-trivial components of $\overline{\mbox{Ric}}^{\bar{\varphi}}$ are given by
		\begin{equation}\label{non triv comp di bar phi ricci in prod}
			\bar{R}^{\bar{\varphi}}_{ij}=R^{\varphi}_{ij}=\lambda \delta_{ij}.
		\end{equation}
		
		Using only that $\mathbb{R}^k$ is flat we easily get
		\begin{equation}\label{der cov di phi ricci in prodotto}
		\bar{\nabla }\overline{\mbox{Ric}}^{\bar{\varphi}}=\pi_M^*\nabla \mbox{Ric}^{\varphi}.
		\end{equation}
		Indeed, by definition of covariant derivative
		\begin{equation*}
		\bar{R}^{\bar{\varphi}}_{AB,C}\bar{\theta}^C=\bar{d}\bar{R}^{\bar{\varphi}}_{AB}-\bar{R}^{\bar{\varphi}}_{CB}\bar{\theta}^C_A-\bar{R}^{\bar{\varphi}}_{AC}\bar{\theta}^C_B,
		\end{equation*}
		that is
		\begin{equation*}
		\bar{R}^{\bar{\varphi}}_{AB,k}\theta^k+\bar{R}^{\bar{\varphi}}_{AB,m+\gamma}\psi^{\gamma}=\bar{d}\bar{R}^{\bar{\varphi}}_{AB}-\bar{R}^{\bar{\varphi}}_{kB}\bar{\theta}^k_A-\bar{R}^{\bar{\varphi}}_{m+\gamma\, B}\bar{\theta}^{m+\gamma}_A-\bar{R}^{\bar{\varphi}}_{A k}\bar{\theta}^k_B-\bar{R}^{\bar{\varphi}}_{A\, m+\gamma}\bar{\theta}^{m+\gamma}_B,
		\end{equation*}
		where we used the obvious notations for the Levi-Civita connection forms. Hence
		\begin{equation*}
		\bar{R}^{\bar{\varphi}}_{ij,k}\theta^k+\bar{R}^{\bar{\varphi}}_{ij,m+\gamma}\psi^{\gamma}=\bar{d}\bar{R}^{\bar{\varphi}}_{ij}-\bar{R}^{\bar{\varphi}}_{kj}\theta^k_i-\bar{R}^{\bar{\varphi}}_{i k}\theta^k_j=R^{\varphi}_{ij,k}\theta^k,
		\end{equation*}
		\begin{equation*}
		\bar{R}^{\bar{\varphi}}_{i\, m+\beta,k}\theta^k+\bar{R}^{\bar{\varphi}}_{i\, m+\beta,m+\gamma}\psi^{\gamma}=\bar{d}\bar{R}^{\bar{\varphi}}_{i \, m+\beta}-\bar{R}^{\bar{\varphi}}_{k\, m+\beta}\theta^k_i-\bar{R}^{\bar{\varphi}}_{i\, m+\gamma}\psi^{\gamma}_{\beta}
		\end{equation*}
		and
		\begin{equation*}
		\bar{R}^{\bar{\varphi}}_{m+\alpha\, m+\beta,k}\theta^k+\bar{R}^{\bar{\varphi}}_{m+\alpha\, m+\beta,m+\gamma}\psi^{\gamma}=\bar{d}\bar{R}^{\bar{\varphi}}_{m+\alpha\, m+\beta}-\bar{R}^{\bar{\varphi}}_{m+\gamma\, m+\beta}\psi^{\gamma}_{\alpha}-\bar{R}^{\bar{\varphi}}_{m+\alpha\, m+\gamma}\psi^{\gamma}_{\beta}=0,
		\end{equation*}
		and thus \eqref{der cov di phi ricci in prodotto} holds.
		
		Now, using also that $(M,g)$ is harmonic-Einstein the above yields $\nabla \mbox{Ric}^{\varphi}=0$ and thus \eqref{nabla phi ricci zero per prodotto} holds.
		
		As pointed out in \hyperref[rmk soliton con phi ricci parallelo]{Remark \ref*{rmk soliton con phi ricci parallelo}} we know that $\bar{S}^{\bar{\varphi}}$ is constant and $\varphi$ is harmonic. Furthermore \eqref{phi bach con phi ricci parallelo} and \eqref{J con phi ricci parallelo} read, respectively,
		\begin{equation*}
			(m+k-2)\bar{B}^{\bar{\varphi}}_{AB}=\bar{W}^{\varphi}_{CADB}\bar{R}^{\bar{\varphi}}_{CD}-\alpha \bar{\varphi}^a_A\bar{\varphi}^a_C\bar{R}^{\bar{\varphi}}_{CB}
		\end{equation*}
		and
		\begin{equation*}
			\bar{J}^a=-2\bar{R}^{\bar{\varphi}}_{AB}\bar{\varphi}^a_{AB}.
		\end{equation*}
		
		Using \eqref{comp di phi in prodotto} and the fact that $(M,g)$ is harmonic-Einstein we infer
		\begin{equation*}
			\bar{J}^a=-2R^{\varphi}_{ij}\varphi^a_{ij}=-2\lambda\tau(\varphi)^a=0,
		\end{equation*}
		hence \eqref{bar J per modello rigid} holds.
		
		Using \eqref{comp di phi in prodotto} and \eqref{non triv comp di bar phi ricci in prod} the components of the $\bar{\varphi}$-Bach tensor are given by
		\begin{equation}\label{bar phi bach con A uguale a i}
		(m+k-2)\bar{B}^{\bar{\varphi}}_{AB}=\lambda(\bar{W}^{\varphi}_{kAkB}-\alpha \varphi^a_i\varphi^a_k\delta_{iA}\delta_{kB})
		\end{equation}
		To compute them we need the components of the $\bar{\varphi}$-Weyl tensor and to compute them we need the components of the $\bar{\varphi}$-Schouten tensor.
		
		By definition the $\bar{\varphi}$-Schouten tensor has components
		\begin{equation}\label{phi bar schouten def generale delle suo comp}
			\bar{A}^{\bar{\varphi}}_{AB}=\bar{R}^{\bar{\varphi}}_{AB}-\frac{\bar{S}^{\bar{\varphi}}}{2(m+k-1)}\delta_{AB}.
		\end{equation}
		Since $\mathbb{R}^k$ is flat, $\bar{\varphi}=\varphi\circ \pi_M$ and $(M,g)$ is harmonic-Einstein, we have $\bar{S}^{\bar{\varphi}}=S^{\varphi}=m\lambda$ and thus the above can be written as
		\begin{equation*}
			\bar{A}^{\bar{\varphi}}_{AB}=\bar{R}^{\bar{\varphi}}_{AB}-\frac{m\lambda}{2(m+k-1)}\delta_{AB}.
		\end{equation*}
		From \eqref{phi bar schouten def generale delle suo comp}, using that $R^{\varphi}_{ij}=\lambda\delta_{ij}$, the only non-trivial components of the $\bar{\varphi}$-Schouten tensor are given by
		\begin{equation}\label{phi bar schouten def generale delle suo comp i e j}
		\bar{A}^{\bar{\varphi}}_{ij}=\frac{m-2+2k}{2(m+k-1)}\lambda\delta_{ij}
		\end{equation}
		and
		\begin{equation}\label{phi bar schouten def generale delle suo comp alpha e beta}
		\bar{A}^{\bar{\varphi}}_{m+\alpha\,m+\beta}=-\frac{m}{2(m+k-1)}\lambda\delta_{\alpha\beta}
		\end{equation}
		
		By definition, see \eqref{comp phi weyl}, the components of the $\bar{\varphi}$-Weyl tensor are given by
		\begin{equation*}
			\bar{W}^{\varphi}_{ABCD}=\bar{R}_{ABCD}-\frac{1}{m+k-2}(\bar{A}^{\bar{\varphi}}_{AC}\delta_{BD}-\bar{A}^{\bar{\varphi}}_{AD}\delta_{BC}+\bar{A}^{\bar{\varphi}}_{BD}\delta_{AC}-\bar{A}^{\bar{\varphi}}_{BC}\delta_{AD}).
		\end{equation*}
		Using \eqref{riem tensor per il prodotto comp non nulle}, \eqref{phi bar schouten def generale delle suo comp i e j} and \eqref{phi bar schouten def generale delle suo comp alpha e beta} from the above we get
		\begin{equation}\label{comp phi weyl t i j k}
		\bar{W}^{\bar{\varphi}}_{tijk}=R_{tijk}-\frac{m-2+2k}{(m+k-1)(m+k-2)}\lambda(\delta_{tj}\delta_{ik}-\delta_{tk}\delta_{ij}),
		\end{equation}
		\begin{equation}\label{comp phi weyl t i j  e m + alpfa}
		\bar{W}^{\bar{\varphi}}_{ti\, m+\alpha\, k}=0,
		\end{equation}
		and
		\begin{equation}\label{comp phi weyl t i m alpa e m beta}
		\bar{W}^{\bar{\varphi}}_{m+\beta i\, m+\alpha\, k}=-\frac{k-1}{(m+k-1)(m+k-2)}\lambda\delta_{\alpha\beta}\delta_{ik}
		\end{equation}

		From \eqref{bar phi bach con A uguale a i} with $A=i$ and $B=j$ we have
		\begin{equation}\label{prime comp di phi Bach}
		(m+k-2)\bar{B}^{\bar{\varphi}}_{ij}=\lambda(\bar{W}^{\varphi}_{kikj}-\alpha \varphi^a_i\varphi^a_j)
		\end{equation}
		Tracing \eqref{comp phi weyl t i j k} we obtain
		\begin{equation*}
		\bar{W}^{\bar{\varphi}}_{titk}=R_{ik}-\frac{(m-2+2k)(m-1)}{(m+k-1)(m+k-2)}\lambda\delta_{ik},
		\end{equation*}
		hence, by definition of $\varphi$-Ricci tensor and since $(M,g)$ is harmonic-Einstein,
		\begin{align*}
		\bar{W}^{\varphi}_{kikj}-\alpha \varphi^a_i\varphi^a_j=\frac{k(k-1)}{(m+k-1)(m+k-2)}\lambda\delta_{ij}.
		\end{align*}
		Plugging the above into \eqref{prime comp di phi Bach} we infer
		\begin{equation}\label{comp phi bach prod i e j}
		(m+k-2)\bar{B}^{\bar{\varphi}}_{ij}=\frac{k(k-1)}{(m+k-1)(m+k-2)}\lambda^2\delta_{ij},
		\end{equation}
		
		From \eqref{bar phi bach con A uguale a i} with $A=i$ and $B=m+\alpha$, using \eqref{comp phi weyl t i j  e m + alpfa} we get
		\begin{equation}\label{comp phi bach prod i e m alpa}
		(m+k-2)\bar{B}^{\bar{\varphi}}_{i\,m+\alpha}=\lambda\bar{W}^{\varphi}_{kik\,m+\alpha}=0.
		\end{equation}
		
		Finally, From \eqref{bar phi bach con A uguale a i} with $A=m+\alpha$ and $B=m+\beta$
		\begin{equation}
		(m+k-2)\bar{B}^{\bar{\varphi}}_{m+\alpha\, m+\beta}=\lambda\bar{W}^{\varphi}_{k \, m+\alpha\, k\, m+\beta}.
		\end{equation}
		Tracing \eqref{comp phi weyl t i m alpa e m beta} we infer
		\begin{equation}
		\bar{W}^{\bar{\varphi}}_{m+\beta k\, m+\alpha\, k}=-\frac{m(k-1)}{(m+k-1)(m+k-2)}\lambda\delta_{\alpha\beta}
		\end{equation}
		and thus
		\begin{equation}\label{comp phi bach prod m alpa e m beta}
		(m+k-2)\bar{B}^{\bar{\varphi}}_{m+\alpha \, m+\beta}=-\frac{m(k-1)}{(m+k-1)(m+k-2)}\lambda^2\delta_{\alpha\beta}.
		\end{equation}
		
		Combining \eqref{comp phi bach prod i e j}, \eqref{comp phi bach prod i e m alpa} and \eqref{comp phi bach prod m alpa e m beta} we conclude the validity of \eqref{bar phi bach per modello rigid}.
	\end{proof}
	
	In the following Theorem we show that having parallel $\varphi$-Ricci tensor is a condition that characterizes rigidity for complete non-compact non-steady gradient harmonic-Ricci soliton, see \hyperref[rmk ipotesi petersen wylei]{Remark \ref*{rmk ipotesi petersen wylei}} below for further comments on the assumptions. We take care of the steady case in \hyperref[prop caratt caso steady]{Proposition \ref*{prop caratt caso steady}} below.
	\begin{thm}\label{thm rigidity}
		Let $M$ be a complete non-compact non-steady gradient harmonic-Ricci soliton, that is,
		\begin{equation}\label{grad ricci harm sol per dim rigid}
		\begin{dcases}
		\mbox{Ric}-\alpha \varphi^*\langle\,,\,\rangle_N+\mbox{Hess}(f)=\lambda g\\
		\tau(\varphi)=d\varphi(\nabla f)
		\end{dcases}
		\end{equation}
		holds for some positive constant $\alpha$, $\varphi:M\to N$ a smooth map, where $(N,\langle\,,\,\rangle_N)$ is a Riemannian manifold, $f\in\mathcal{C}^{\infty}(M)$ and $\lambda\in\mathbb{R}\setminus \{0\}$. Assume that the $\varphi$-Ricci tensor is parallel. Then \eqref{grad ricci harm sol per dim rigid} is rigid.
	\end{thm}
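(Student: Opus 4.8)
The plan is to use the remarks preceding the statement to reduce \eqref{grad ricci harm sol per dim rigid} to a statement about a parallel Hessian, and then to recover the product structure of \hyperref[def rigidita]{Definition \ref*{def rigidita}} by a de Rham type splitting argument adapted to the extra datum $\varphi$. First I would invoke \hyperref[rmk phi ricci parallelo]{Remark \ref*{rmk phi ricci parallelo}} and \hyperref[rmk soliton con phi conser in realta e arm]{Remark \ref*{rmk soliton con phi conser in realta e arm}}: the assumption $\nabla\mbox{Ric}^{\varphi}=0$ forces $S^{\varphi}$ to be constant and $\varphi$ to be conservative, hence harmonic, so $\tau(\varphi)=0$ and, by the second equation of \eqref{grad ricci harm sol per dim rigid}, $d\varphi(\nabla f)=0$. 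Plugging $\nabla\mbox{Ric}^{\varphi}=0$ into the first soliton equation in the form $\mbox{Hess}(f)=\lambda g-\mbox{Ric}^{\varphi}$ shows that $\mbox{Hess}(f)$ is parallel as well; in particular the eigenvalues of $\mbox{Ric}^{\varphi}$ are constant and each eigendistribution is parallel, hence totally geodesic.

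Next I would locate $\nabla f$ inside this eigenspace decomposition. By \eqref{form gradiente phi curv scalare in solitoni gradiente} and the constancy of $S^{\varphi}$ one has $R^{\varphi}_{ij}f_j=\tfrac12 S^{\varphi}_i=0$, so $\nabla f$ lies in the kernel $E_0$ of $\mbox{Ric}^{\varphi}$, equivalently in the $\lambda$-eigendistribution of $\mbox{Hess}(f)$. The key observation is that $\mbox{Ric}^{\varphi}$ can then only carry the two eigenvalues $0$ and $\lambda$: if $\mu\neq 0$ is an eigenvalue, with (parallel) eigendistribution $E_{\mu}$, then $E_{\mu}\perp E_0$ gives $X(f)=g(\nabla f,X)=0$ for every $X\in E_{\mu}$, so $f$ is constant along the totally geodesic leaves of $E_{\mu}$ and therefore $\mbox{Hess}(f)|_{E_{\mu}}=0$; comparing with $\mbox{Hess}(f)|_{E_{\mu}}=(\lambda-\mu)g|_{E_{\mu}}$ and using $\lambda\neq 0$ yields $\mu=\lambda$. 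Writing $E_0$ (of rank $k$) for the kernel of $\mbox{Ric}^{\varphi}$ and $E_{\lambda}$ for its orthogonal complement, both are parallel, so the de Rham decomposition theorem splits the universal cover isometrically as $L\times P$ with $TL=E_{\lambda}$ and $TP=E_0$, and both factors are complete.

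On the factor $P$ the function $f$ descends (being constant in the $L$-directions, as $\nabla f\in E_0$) and satisfies $\mbox{Hess}_P(f)=\lambda g_P$ with $\lambda\neq 0$; by Tashiro's classical theorem (or, directly, by integrating $\mbox{Hess}_P(f)=\lambda g_P$ along geodesics issuing from a critical point of $f$) a complete manifold carrying such a function is isometric to $\mathbb{R}^k$ with $f=\frac{\lambda}{2}|x|^2+\langle b,x\rangle+c$, matching condition $iii)$ of \hyperref[def rigidita]{Definition \ref*{def rigidita}}. Since $\mathbb{R}^k$ is flat, $0=\mbox{Ric}^{\varphi}|_{E_0}=\mbox{Ric}|_{E_0}-\alpha\,\varphi^{*}\langle\,,\,\rangle_N|_{E_0}=-\alpha\,\varphi^{*}\langle\,,\,\rangle_N|_{E_0}$ forces $d\varphi$ to vanish on $E_0$, so $\varphi$ is constant along the Euclidean factor and descends to a map $\varphi_L$ on $L$. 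On $L$ the product structure then gives $\mbox{Ric}^{\varphi_L}=\mbox{Ric}^{\varphi}|_{E_{\lambda}}=\lambda g_L$ and $\tau(\varphi_L)=\tau(\varphi)=0$, so $(L,g_L)$ is harmonic-Einstein with respect to $\alpha$, $\varphi_L$ and $\lambda$. The degenerate cases are immediate: $k=0$ gives $f$ constant and $M$ harmonic-Einstein, while $k=m$ gives $\mbox{Ric}^{\varphi}\equiv 0$, $\varphi$ constant and $M=\mathbb{R}^m$.

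The main obstacle I anticipate is not the pointwise algebra but the global bookkeeping: descending the de Rham splitting from the universal cover to the quotient appearing in \hyperref[def rigidita]{Definition \ref*{def rigidita}}, and verifying that the splitting is simultaneously compatible with the metric, the potential $f$ and the map $\varphi$, so that $\varphi=\varphi_L\circ\pi_L$ and $f=f_{\mathbb{R}^k}\circ\pi_{\mathbb{R}^k}$ genuinely hold after passing to the quotient. These are precisely the steps where one must imitate, with care for the extra datum $\varphi$, the argument of Petersen and Wylie in \cite{PW}.
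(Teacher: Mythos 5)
Your proposal is correct, and its overall skeleton --- the reduction via Remark \ref{rmk phi ricci parallelo} and Remark \ref{rmk soliton con phi ricci parallelo}, the eigenvalue dichotomy $\{0,\lambda\}$, the de Rham splitting of the universal cover, Tashiro's theorem on the flat factor, and the flatness argument forcing $d\varphi$ to vanish on the Euclidean directions --- coincides with the paper's proof. Where you genuinely diverge is the key algebraic step. The paper proves that the eigenvalues of $\mbox{Hess}(f)$ lie in $\{0,\lambda\}$ analytically: it invokes the Hamilton-type identity \eqref{ham type id}, which together with the constancy of $S^{\varphi}$ gives $\mbox{Hess}(|\nabla f|^2)=2\lambda\,\mbox{Hess}(f)$, and combines this with $f_{ijk}=0$ (whence $\mbox{Hess}(|\nabla f|^2)=2\,\mbox{Hess}(f)^2$) to get the quadratic relation $\mbox{Hess}(f)^2=\lambda\,\mbox{Hess}(f)$. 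You instead argue geometrically: by \eqref{form gradiente phi curv scalare in solitoni gradiente} and the constancy of $S^{\varphi}$, the field $\nabla f$ is a section of the kernel distribution $E_0$ of $\mbox{Ric}^{\varphi}$, which is parallel, so $\nabla_Y\nabla f\in E_0$ for every $Y$ and hence $\mbox{Hess}(f)(X,\cdot)=0$ whenever $X$ lies in an eigendistribution $E_{\mu}$ with $\mu\neq 0$; comparing with $\mbox{Hess}(f)|_{E_{\mu}}=(\lambda-\mu)\,g|_{E_{\mu}}$ forces $\mu=\lambda$. Both arguments are sound. Yours has the small advantages of bypassing the Hamilton identity entirely (the paper quotes it from \cite{A}, and its statement in the preliminaries is given only for shrinking solitons, although it does hold for every gradient soliton, so the expanding case is on the same footing in your argument without comment) and of producing the two parallel distributions $E_0$ and $E_{\lambda}$ in the very step that establishes the dichotomy; the paper's moving-frame computation is shorter to write and does not need the constancy of rank of the eigendistributions of a parallel tensor. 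Finally, the difficulty you flag at the end about descending to the quotient is not an actual obstacle: Definition \ref{def rigidita} only requires $M$ to be isometric to a quotient of $L\times\mathbb{R}^k$, so, exactly as in the paper, it suffices to split the universal cover, on which the lifted data are again a gradient soliton with parallel $\varphi$-Ricci tensor because the covering projection is a local isometry intertwining $f$ and $\varphi$ with their lifts.
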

	\begin{proof}
		Eventually after passing to the universal cover $(\tilde{M},\tilde{g})$ of $(M,g)$ we can assume that $(M,g)$ is simply connected. Indeed, since the projection $p:(\tilde{M},\tilde{g})\to (M,g)$ is a local isometry, $(\tilde{M},\tilde{g})$ is itself a Ricci-harmonic soliton with respect to $\alpha$, $\lambda$ and the lifting of $f$ and $\varphi$ to $\widetilde{M}$.
		
		Since the $\varphi$-Ricci tensor is parallel, with the aid of \hyperref[rmk soliton con phi ricci parallelo]{Remark \ref*{rmk soliton con phi ricci parallelo}} we know that $\varphi$ is harmonic, hence $d\varphi(\nabla f)=0=\tau(\varphi)$, and the $\varphi$-scalar curvature is constant.
		
		Recall that $f$ satisfies the Hamilton-type identity \eqref{ham type id}. Hence, since $S^{\varphi}$ is constant,
		\begin{equation}\label{hess f per via dell id di hamiton}
		\mbox{Hess}(|\nabla f|^2)=2\lambda \mbox{Hess}(f).
		\end{equation}
		
		Notice that since $\mbox{Ric}^{\varphi}$ is parallel using the first equation of \eqref{grad ricci harm sol per dim rigid} we infer that also $\mbox{Hess}(f)$ is parallel, that is, $f_{ijk}=0$ in a local orthonormal coframe. Then we easily get
		\begin{equation*}
			\mbox{Hess}(|\nabla f|^2)=2\mbox{Hess}(f)^2.
		\end{equation*}
		Indeed, in a local orthonormal coframe
		\begin{equation*}
		|\nabla f|^2_{ij}=2(f_{kij}f_k+f_{ik}f_{kj}).
		\end{equation*}
		Then \eqref{hess f per via dell id di hamiton} reads
		\begin{equation*}
		\lambda \mbox{Hess}(f)=\mbox{Hess}(f)^2,
		\end{equation*}
		
		From the above equation the only possible eigenvalues of $\mbox{Hess}(f)$ are $0$ and $\lambda$. Using the first equation of \eqref{grad ricci harm sol per dim rigid} the same holds also for $\mbox{Ric}^{\varphi}$.
		
		If $f$ is constant then $(M,g)$ is harmonic-Einstein, hence rigid. From now on we assume that $f$ is non-constant on $M$.
		
		Since $S^{\varphi}$ is constant and $f$ is non-constant, from \eqref{form gradiente phi curv scalare in solitoni gradiente} we see that the $\varphi$-Ricci tensor has always $0$ as eigenvalue and thus the Hessian of $f$ has always $\lambda$ as eigenvalue, at least at a point $p\in M$.
		
		We denote by $V_0$ and $V_{\lambda}$ the eigenspace bundles of $\mbox{Hess}(f)$. Since $M$ is connected and the eigenvalues of $\mbox{Hess}(f)$ are smooth we know that $TM$ splits orthogonally as $V_0\oplus V_{\lambda}$. Furthermore, since we are assuming $(M,g)$ simply connected, $V_0$ and $V_{\lambda}$ are parallel and integrable distributions and $(M,g)$ splits as the Riemannian product of two complete totally geodesic submanifolds $(L,g_L)$ and $(E,g_E)$ such that $TE=\left.V_{\lambda}\right|_E$ and $TL=\left.V_0\right|_L$. Moreover, via the splitting,
		\begin{equation}\label{hess di f per via dello splitting}
			\mbox{Hess}(f)=\lambda \pi^*_Eg_E
		\end{equation}
		and
		\begin{equation}\label{phi ricci per via dello splitting}
			\mbox{Ric}^{\varphi}=\lambda \pi^*_Lg_L.
		\end{equation}
		This is essentially the de Rham splitting theorem \cite{dR}, see for instance Section 3.2 of \cite{J} (or also 2.3 at page 55 of \cite{DMVVZ}). We denote by $k$ the dimension of $E$, we know that $k\geq 1$.
		
		The validity of \eqref{hess di f per via dello splitting} implies that, via the splitting, $f=f_E\circ \pi_E$ for some $f_E\in\mathcal{C}^{\infty}(E)$ such that
		\begin{equation*}
			{}^E\mbox{Hess}(f_E)=\lambda g_E,
		\end{equation*}
		on the complete Riemannian manifold $(E,g_E)$. Then, via the classic theorem of Y. Tashiro \cite{T} (see also Theorem 8.5 of \cite{AMR}), since $\lambda\neq 0$ we deduce that $(E,g_E)$ is isometric to $\mathbb{R}^k$ with the Euclidean metric and, via the isometry, $f_E(x)=\frac{\lambda}{2}|x|^2+\langle b,x\rangle+c$ for some $b\in\mathbb{R}^k$ and $c\in\mathbb{R}$. Furthermore, \eqref{phi ricci per via dello splitting} gives that $\pi^*_E\mbox{Ric}^{\varphi}=\pi^*_E\mbox{Ric}-\alpha \pi^*_E\varphi^*\langle\,,\,\rangle_N$ vanishes on $E$. Via the splitting $\pi^*_E\mbox{Ric}={}^E\mbox{Ric}=0$, since $E$ is flat, hence $\pi^*_E\varphi^*\langle\,,\,\rangle_N=(\varphi\circ \pi_E)^*\langle\,,\,\rangle_N$ vanishes on $E$. Then $\varphi\circ \pi_E$ is constant. Since the argument above applies for every leaf of the foliation $E\times L$ of $M$ we deduce that $\varphi=\varphi_L\circ \pi_L$, for some smooth map $\varphi_L:L\to N$. Since $\varphi$ is harmonic and $\varphi=\varphi_L\circ \pi_L$ we immediately get, via \eqref{comp di phi in prodotto}, that $\varphi_L$ is harmonic too. Now, using \eqref{phi ricci per via dello splitting} we get
		\begin{equation*}
		{}^L\mbox{Ric}-\alpha \varphi_L^*\langle \,,\,\rangle_N=\lambda g_L
		\end{equation*}
		and since $\varphi_L$ is harmonic we conclude that $(L,g_L)$ is harmonic-Einstein with respect to $\varphi_L$, $\alpha$ and $\lambda$.
	\end{proof}
	\begin{rmk}\label{rmk thm rigidity per shrinking}
		By the proof of the Theorem above we know that the universal cover $(\tilde{M},\tilde{g})$ of $(M,g)$ is isometric to the Riemannian product of $L\times \mathbb{R}^k$ described in \hyperref[prop che descrive modello rigido]{Proposition \ref*{prop che descrive modello rigido}}, for some $0\leq k\leq m$.
		
		When the soliton is shrinking, i.e., when $\lambda>0$, we know that $L$ is compact. Indeed, using that $\alpha>0$
		\begin{equation*}
		{}^L\mbox{Ric}\geq {}^L\mbox{Ric}-\alpha \varphi_L^*\langle \,,\,\rangle_N=\lambda g_L,
		\end{equation*}
		hence we can apply Myers's theorem. Furthermore, since
		\begin{equation*}
		\mbox{Ric}_f\geq \mbox{Ric}-\alpha \varphi^*\langle\,,\,\rangle_N+\mbox{Hess}(f)=\lambda g,
		\end{equation*}
		using the weighted version of the Myers's theorem holds (see Theorem 1 of \cite{N}) we obtain that the fundamental group of $M$ is finite.
		
		Then, for shrinking solitons, the theorem above can be strengthen, obtaining that $(M,g)$ is isometric to a finite quotient of $L\times \mathbb{R}^k$ for some $1\leq k\leq m$ ($k=0$ is excluded because $M$ is non-compact), where $(L,g_L)$ is a compact harmonic-Einstein manifold with respect to $\alpha$, $\varphi$ and $\lambda$ and, via the isometry, $\varphi=\varphi_L\circ \pi_L$ and $f=f_{\mathbb{R}^k}\circ \pi_{\mathbb{R}^k}$, where $f_{\mathbb{R}^k}=\frac{\lambda}{2}|x|^2+\langle b,x\rangle+c$ for some $c\in\mathbb{R}$ and $b\in\mathbb{R}^k$ and $\pi_L:L\times \mathbb{R}^k\to L$ and where $\pi_{\mathbb{R}^k}:L\times \mathbb{R}^k\to \mathbb{R}^k$ are the canonical projections.
	\end{rmk}
	
	\begin{rmk}\label{rmk ipotesi petersen wylei}
		In Theorem 1.2 of \cite{PW} the authors characterized rigidity for complete non-compact non-steady Ricci soliton via the constancy of the scalar curvature and the radial flatness, that is,
		\begin{equation*}
			R(\cdot,\nabla f )\nabla f=0.
		\end{equation*}
		
		We could have done the analogous here, i.e., characterize rigidity for complete non-compact non-steady harmonic-Ricci soliton via the constancy of the $\varphi$-scalar curvature and the radial flatness, but we preferred the assumption of parallel $\varphi$-Ricci tensor. The reason for this choice are two. The first is that, as seen in \hyperref[prop che descrive modello rigido]{Proposition \ref*{prop che descrive modello rigido}}, rigid harmonic-Ricci soliton have actually parallel $\varphi$-Ricci tensor. The second is that the proof with this assumption is simpler, more similar to the one of Theorem 2 of \cite{N} than the one of Theorem 1.2 of \cite{PW}, and for our purpose this characterization will be sufficient.
		
		Notice that for a gradient harmonic-Ricci soliton having parallel $\varphi$-Ricci tensor implies the constancy of the $\varphi$-scalar curvature, as seen in \hyperref[rmk soliton con phi ricci parallelo]{Remark \ref*{rmk soliton con phi ricci parallelo}}, and the radial flatness, since \eqref{form commutazione der covariante per ricci phi in solitoni gradiente} gives $f_tR_{tijk}=0$, that is, $R(\cdot,\cdot)\nabla f=0$.
	\end{rmk}
	
	To conclude this Section we deal with complete gradient steady harmonic-Ricci soliton.
	\begin{prop}\label{prop caratt caso steady}
		Let $(M,g)$ be a complete gradient steady harmonic-Ricci soliton of dimension $m\geq 2$ with respect to a positive constant $\alpha$, a smooth map $\varphi:M\to N$, where $(N,\langle\,,\,\rangle_N)$ is a Riemannian manifold, and $f\in\mathcal{C}^{\infty}(M)$. If the $\varphi$-scalar curvature is constant and $f$ is non-constant then $(M,g)$ is isometric to the the Riemannian product of $\mathbb{R}$ with a totally geodesic $\psi$-Ricci flat (with respect to $\alpha$) hypersurface $\Sigma$, where $\psi:=\left.\varphi\right|_{\Sigma}$. Moreover $\varphi=\psi\circ \pi_{\Sigma}$ on $\mathbb{R}\times \Sigma$, where $\pi_{\Sigma}:\mathbb{R}\times \Sigma\to \Sigma$ is the canonical projection and the function $f$ can be expressed on $\mathbb{R}\times \Sigma$ as
		\begin{equation*}
			f(t,x)=bt+c \quad \mbox{ for every } t\in \mathbb{R} \mbox{ and } x\in\Sigma,
		\end{equation*}
		for some $b>0$ and $c\in\mathbb{R}$ such that $\Sigma=f^{-1}(\{b\})$.
		
		In particular, complete gradient steady harmonic-Ricci solitons with constant $\varphi$-scalar curvature are rigid.
	\end{prop}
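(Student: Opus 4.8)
The plan is to deduce the result directly from Theorem \ref{thm stime phi curv scalare nella tesi}, whose part i) is devoted to exactly the steady case $\lambda = 0$. Since $(M,g)$ is a complete gradient steady harmonic-Ricci soliton, that theorem applies and gives $S^\varphi_* := \inf_M S^\varphi = 0$. The one substantive step is to combine this with the hypothesis that $S^\varphi$ is constant: a constant function whose infimum is $0$ must vanish identically, so $S^\varphi \equiv 0$ on $M$. In particular the alternative ``$S^\varphi > 0$ on $M$'' in the dichotomy of Theorem \ref{thm stime phi curv scalare nella tesi} i) is impossible.

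Having excluded that alternative, and since $f$ is non-constant by assumption, we necessarily fall into the remaining branch of Theorem \ref{thm stime phi curv scalare nella tesi} i). That branch produces verbatim the description in the statement: $(M,g)$ is isometric to $\mathbb{R} \times \Sigma$ with $\Sigma$ a totally geodesic $\psi$-Ricci flat hypersurface, $\psi = \left.\varphi\right|_\Sigma$, the map factoring as $\varphi = \psi \circ \pi_\Sigma$, and the potential affine along the $\mathbb{R}$ factor with positive leading coefficient. Hence the first assertion needs no computation beyond recognizing that constancy of $S^\varphi$ forces the splitting branch.

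For the closing ``in particular'' claim I would split into cases according to whether $f$ is constant. When $f$ is non-constant, the splitting just obtained realizes $(M,g)$ as $\mathbb{R} \times \Sigma$ with $\Sigma$ harmonic-Einstein relative to $\lambda = 0$ (being $\psi$-Ricci flat means $\mbox{Ric}^\psi = 0$) and $f$ of the form $bt + c$, $b > 0$, along the Euclidean line; since $\lambda = 0$ the model potential $\frac{\lambda}{2}|x|^2 + \langle b,x\rangle + c$ of \hyperref[def rigidita]{Definition \ref*{def rigidita}} degenerates to the affine function $\langle b,x\rangle + c$, matching $bt + c$, so this is precisely a rigid soliton with Euclidean factor of dimension $k = 1$. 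When $f$ is constant, the soliton equations \eqref{grad harm ricci soliton non intro} reduce to $\mbox{Ric}^\varphi = 0$ together with $\tau(\varphi) = d\varphi(\nabla f) = 0$, so $(M,g)$ is $\varphi$-Ricci flat, i.e.\ harmonic-Einstein with $\lambda = 0$, hence rigid with $k = 0$. In either case rigidity holds.

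The argument is almost entirely a repackaging of the cited theorem, so there is no genuine obstacle internal to the present proof; the real work sits upstream, in the proof of Theorem \ref{thm stime phi curv scalare nella tesi} i), where both $S^\varphi_* = 0$ and the splitting dichotomy for steady solitons are obtained via the maximum principle applied to the identity \eqref{X laplacian per strutt tipo Einstein}.
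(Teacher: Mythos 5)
Your proposal is correct and follows essentially the same route as the paper's own proof: invoke part $i)$ of \hyperref[thm stime phi curv scalare nella tesi]{Theorem \ref*{thm stime phi curv scalare nella tesi}}, note that constancy of $S^{\varphi}$ together with $S^{\varphi}_*=0$ forces $S^{\varphi}\equiv 0$, which rules out the branch $S^{\varphi}>0$ and, since $f$ is non-constant, yields the splitting verbatim. Your explicit case analysis for the ``in particular'' rigidity claim (the $k=1$ splitting when $f$ is non-constant, and the harmonic-Einstein case $k=0$ when $f$ is constant) merely spells out what the paper's terse ``hence the thesis'' leaves implicit, so there is no substantive difference.
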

	\begin{proof}
		It is an easy application of $i)$ of \hyperref[thm stime phi curv scalare nella tesi]{Theorem \ref*{thm stime phi curv scalare nella tesi}}. Indeed, since $\lambda=0$ we have that $S^{\varphi}_*=0$. Since we assumed the constancy of the $\varphi$-scalar curvature we have $S^{\varphi}=S^{\varphi}_*=0$ on $M$, hence the thesis.
	\end{proof}
	
	\begin{rmk}
		\hyperref[prop caratt caso steady]{Proposition \ref*{prop caratt caso steady}} is apparently in contrast with \hyperref[prop che descrive modello rigido]{Proposition \ref*{prop che descrive modello rigido}}, where we showed that the Riemannian product $\bar{M}$ of any complete $\varphi$-Ricci flat manifold with $\mathbb{R}^k$, for any non-negative integer $k$ (and not only for $k=1$), produces a complete gradient steady harmonic-Ricci soliton with constant $\bar{\varphi}$-scalar curvature.
		
		To overcome this contrast notice that, when $\lambda=0$, we can always reduce to the cases where $k=0,1$. Indeed, when $k\geq 2$, the affine function $f$ is given by
		\begin{equation*}
		f:\mathbb{R}^k\to \mathbb{R}, \quad x\mapsto f(x)=\langle x,b\rangle+c,
		\end{equation*}
		for some $b\in\mathbb{R}^k$ and $c\in\mathbb{R}$. If $b=0$ then $f$ is constant, hence we can reduce to the case $k=0$ by replacing $M$ with $M\times \mathbb{R}^k$, that is $\varphi$-Ricci flat with respect to the trivial lifting of $\varphi$. Assume now that $b\neq 0$. Then we may choose a orthonormal basis $\{v_1,\ldots ,v_k\}$ of $\mathbb{R}^k$ such that
		\begin{equation*}
		v_k=\frac{b}{|b|}.
		\end{equation*}
		Then, letting $x=x^1v_1+\ldots +x^kv_k$, we see that $f(x)=|b|x^k+c$. Hence, via the following isometry
		\begin{equation*}
		\psi:\mathbb{R}^k\to \mathbb{R}^{k-1}\times \mathbb{R}, \quad x\mapsto (x-x^kv_k,|b|x_k+c),
		\end{equation*}
		the map $f$ takes the form
		\begin{equation*}
		f(y,r)=|b|r+c.
		\end{equation*}
		Then $\bar{M}=M\times \mathbb{R}^k$ can be seen as the Riemannian product between the $\varphi$-Ricci flat manifold $\tilde{M}:=M\times \mathbb{R}^{k-1}$ (with respect to the trivial lifting of $\varphi$ to $M\times \mathbb{R}^k$) and $\mathbb{R}$ while $\bar{f}$ can be seen as the lifting of $\tilde{f}(x)=|b|x+c$, for $x\in \mathbb{R}$, to $\bar{M}=\tilde{M}\times \mathbb{R}$.
	\end{rmk}

	\section{Compact harmonic-Ricci solitons}\label{section solitoni compatti}
	
	In this section we consider compact harmonic-Ricci solitons of dimension $m\geq 2$
	\begin{equation}\label{harmonic ricci soliton euqation}
	\begin{dcases}
	\mbox{Ric}^{\varphi}+\frac{1}{2}\mathcal{L}_Xg=\lambda g\\
	\tau(\varphi)=d\varphi(X)
	\end{dcases}
	\end{equation}
	for $X\in\mathfrak{X}(M)$, $\lambda\in\mathbb{R}$ and $\alpha>0$. In \hyperref[sub rid a sol bgrad]{Section \ref*{sub rid a sol bgrad}} we show that if \eqref{harmonic ricci soliton euqation} is not rigid then it is gradient and shrinking. Using this important information, in \hyperref[sect rig caso compatto phi cotton]{Section \ref*{sect rig caso compatto phi cotton}}, we prove that the soliton \eqref{harmonic ricci soliton euqation} is rigid if and only if it is $\varphi$-Cotton flat (see \hyperref[thm phi cotton flat compact ricci solitons are rigid]{Theorem \ref*{thm phi cotton flat compact ricci solitons are rigid}} below).
	
	\subsection{Reduction to compact gradient shrinking harmonic-Ricci solitons}\label{sub rid a sol bgrad}
	
	\begin{rmk}\label{rmk rigido in caso compatto se e solo se phi curv scalar cost}
		Using \hyperref[thm with S phi constant is the manifold is a sphere with mu equal to zero]{Theorem \ref*{thm with S phi constant is the manifold is a sphere with mu equal to zero}} it is easy to see that the harmonic-Ricci soliton is rigid (i.e., is harmonic-Einstein) if and only if $S^{\varphi}$ is constant.
	\end{rmk}
	
	\begin{prop}\label{prop compact ricci harm soliton non rigid shrink}
		If the compact harmonic-Ricci soliton \eqref{harmonic ricci soliton euqation} is not rigid then it is shrinking.
	\end{prop}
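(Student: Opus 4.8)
The plan is to prove the contrapositive: assuming the soliton \eqref{harmonic ricci soliton euqation} is steady or expanding, i.e. $\lambda\leq 0$, I will show that $S^{\varphi}$ is constant, which by \hyperref[rmk rigido in caso compatto se e solo se phi curv scalar cost]{Remark \ref*{rmk rigido in caso compatto se e solo se phi curv scalar cost}} is equivalent to rigidity. The starting point is the identity \eqref{X laplacian per strutt tipo Einstein}, valid for the soliton \eqref{harmonic ricci soliton euqation}; since here $\lambda$ is a real constant we have $\Delta\lambda=0$ and it reads
\begin{equation*}
\frac{1}{2}\Delta_XS^{\varphi}=-\alpha|\tau(\varphi)|^2-|\mathring{\mbox{Ric}}^{\varphi}|^2-\frac{(S^{\varphi})^2}{m}+\lambda S^{\varphi}.
\end{equation*}

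First I would exploit the compactness of $M$ through the minimum principle. At a point $q\in M$ where $S^{\varphi}$ attains its minimum one has $\nabla S^{\varphi}(q)=0$ and $\Delta S^{\varphi}(q)\geq 0$, so that $\Delta_XS^{\varphi}(q)=\Delta S^{\varphi}(q)\geq 0$. Feeding this into the displayed identity and discarding the manifestly non-positive terms $-\alpha|\tau(\varphi)|^2$ and $-|\mathring{\mbox{Ric}}^{\varphi}|^2$ yields, at $q$,
\begin{equation*}
\frac{S^{\varphi}(q)}{m}\bigl(S^{\varphi}(q)-m\lambda\bigr)\leq 0.
\end{equation*}
Since the roots of $t\mapsto t(t-m\lambda)$ are $0$ and $m\lambda\leq 0$, the product is non-positive only for $m\lambda\leq t\leq 0$; this forces $m\lambda\leq S^{\varphi}(q)\leq 0$. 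In particular $\min_MS^{\varphi}=S^{\varphi}(q)\geq m\lambda$, hence $S^{\varphi}\geq m\lambda$ on all of $M$.

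The final step is an averaging argument. Tracing the first equation of \eqref{harmonic ricci soliton euqation} with respect to $g$, and using $\mbox{tr}(\mathcal{L}_Xg)=2\,\mbox{div}(X)$, gives the pointwise relation $S^{\varphi}+\mbox{div}(X)=m\lambda$; integrating over the compact manifold $M$ and applying the divergence theorem to eliminate $\int_M\mbox{div}(X)$ one obtains $\int_MS^{\varphi}=m\lambda\,\mbox{vol}(M)$. Combining this with the pointwise bound of the previous step, the non-negative function $S^{\varphi}-m\lambda$ has vanishing integral, whence $S^{\varphi}\equiv m\lambda$ is constant and the soliton is rigid. The argument is uniform in $m\geq 2$ and treats on the same footing the steady case $\lambda=0$ (where the minimum principle already forces $\min_MS^{\varphi}=0=m\lambda$) and the expanding case $\lambda<0$; it is the exact counterpart of the classical fact that compact steady or expanding Ricci solitons are trivial.

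I expect the only delicate point to be the sign bookkeeping in the minimum-principle step: one must check that the quadratic inequality at $q$ produces a \emph{lower} bound $S^{\varphi}\geq m\lambda$ rather than merely an upper bound, since it is precisely this lower bound that is compatible with—and is then saturated by—the integral mean $m\lambda$ dictated by the trace identity. Everything else is a routine combination of the maximum principle on a compact manifold with the divergence theorem, and no hypothesis beyond $\lambda\leq 0$ is needed.
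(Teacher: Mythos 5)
Your proof is correct and is essentially the paper's own argument in contrapositive form: both rest on evaluating the identity \eqref{X laplacian per strutt tipo Einstein} at a minimum point of $S^{\varphi}$, the integrated trace identity $\int_M S^{\varphi}=m\lambda\,\mbox{vol}(M)$, and \hyperref[rmk rigido in caso compatto se e solo se phi curv scalar cost]{Remark \ref*{rmk rigido in caso compatto se e solo se phi curv scalar cost}}. The only cosmetic difference is that the paper assumes non-rigidity to get the strict inequality $S^{\varphi}_*<m\lambda$ and concludes $\lambda>0$, whereas you assume $\lambda\leq 0$ and saturate the pointwise bound $S^{\varphi}\geq m\lambda$ against the vanishing integral of $S^{\varphi}-m\lambda$; the ingredients and their roles are the same.
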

	\begin{proof}
		Taking the trace of the first equation of \eqref{harmonic ricci soliton euqation} we get $S^{\varphi}+\mbox{div}(X)=m\lambda$, hence, using the divergence theorem
		\begin{equation*}
		\int_MS^{\varphi}=m\lambda \mbox{vol}(M).
		\end{equation*}
		By setting
		\begin{equation*}
		\bar{S}^{\varphi}:=\frac{1}{\mbox{vol}(M)}\int_MS^{\varphi}, \quad S^{\varphi}_*:=\min_MS^{\varphi},
		\end{equation*}
		assuming that the soliton is not rigid, from \hyperref[rmk rigido in caso compatto se e solo se phi curv scalar cost]{Remark \ref*{rmk rigido in caso compatto se e solo se phi curv scalar cost}} cannot be constant and thus
		\begin{equation}\label{inf phi curv scalare minore di m lambda}
		S^{\varphi}_*<\bar{S}^{\varphi}=m\lambda.
		\end{equation}
		
		On the other hand equation \eqref{X laplacian per strutt tipo Einstein} gives
		\begin{equation*}
		\frac{1}{2}\Delta_XS^{\varphi}+|\mathring{\mbox{Ric}}^{\varphi}|^2+\alpha|\tau(\varphi)|^2+\frac{S^{\varphi}}{m}(S^{\varphi}-m\lambda)=0,
		\end{equation*}
		hence
		\begin{equation*}
		\frac{1}{2}\Delta_XS^{\varphi}+\frac{S^{\varphi}}{m}(S^{\varphi}-\bar{S}^{\varphi})\leq 0.
		\end{equation*}
		Let $x_*\in M$ such that $S^{\varphi}(x_*)=S^{\varphi}_*$. Using that $\Delta_XS^{\varphi}(x^*)\leq 0$, evaluating the above at $x_*$ we get
		\begin{equation*}
		S^{\varphi}_*(S^{\varphi}_*-\bar{S}^{\varphi})\leq 0.
		\end{equation*}
		Since the harmonic-Ricci soliton is not rigid we have $S^{\varphi}_*-\bar{S}^{\varphi}<0$, then from the above we infer $S^{\varphi}_*\geq 0$ and combining it with \eqref{inf phi curv scalare minore di m lambda} we obtain
		\begin{equation*}
		0\leq S^{\varphi}_*<m\lambda,
		\end{equation*}
		and thus $\lambda>0$.
	\end{proof}

	Combining Theorem 1.1 of \cite{YZ} with \hyperref[prop compact ricci harm soliton non rigid shrink]{Proposition \ref*{prop compact ricci harm soliton non rigid shrink}} we get the following
	\begin{thm}\label{thm solitone ricci harm compatto non rigido grad shrink}
		If a compact harmonic-Ricci soliton of dimension $m\geq 2$ is not rigid then it is gradient and shrinking.
	\end{thm}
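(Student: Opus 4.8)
The plan is to dispose of the two halves of the conclusion separately, since each is already available as an external input. Throughout, I would hold fixed the data $\alpha>0$, $\varphi:M\to N$, $X\in\mathfrak{X}(M)$ and $\lambda$ attached to the compact soliton \eqref{harmonic ricci soliton euqation}.

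First I would establish the shrinking assertion. By \hyperref[prop compact ricci harm soliton non rigid shrink]{Proposition \ref*{prop compact ricci harm soliton non rigid shrink}}, the hypothesis that the soliton is not rigid already forces $\lambda>0$, so compactness together with non-rigidity delivers shrinking immediately. This step is essentially for free once the Proposition is in hand: its proof rests on integrating the identity \eqref{X laplacian per strutt tipo Einstein} and applying the weak maximum principle at a minimum point of $S^{\varphi}$, using that non-rigidity is equivalent (in the compact case) to $S^{\varphi}$ being non-constant.

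Second I would upgrade the soliton vector field $X$ to a gradient, that is, show $X=\nabla f+Y$ for a potential $f$ and a vertical Killing field $Y$ as in \eqref{vertical killing vector field}, so that \eqref{grad harm ricci soliton non intro} holds. This is the harmonic-Ricci analogue of Perelman's theorem that compact shrinking Ricci solitons are gradient, and it is precisely Theorem 1.1 of \cite{YZ}. Since the soliton is now known to be both compact and shrinking, invoking that result yields the gradient structure directly, completing the proof by combination.

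The main obstacle, were one to prove the gradient claim from scratch rather than cite \cite{YZ}, lies in this second step. In the Ricci-soliton setting the argument proceeds through the monotonicity of Perelman's $\mathcal{W}$-entropy along the flow and the identification of the soliton with a minimizer of the associated $\nu$-functional, from which the soliton field emerges as the gradient of the optimal potential. Transporting this to the harmonic-Ricci flow requires the analogue of Perelman's entropy adapted simultaneously to the evolving metric and the evolving map $\varphi$, together with a verification that the minimizing configuration forces the vector-field part of $X$ to be a gradient up to a vertical Killing correction. As this analysis is exactly the content of \cite{YZ}, the present theorem reduces to splicing that input together with the shrinking conclusion of the Proposition.
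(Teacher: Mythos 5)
Your proposal is correct and takes essentially the same approach as the paper: the paper's proof of this theorem is literally the combination of \hyperref[prop compact ricci harm soliton non rigid shrink]{Proposition \ref*{prop compact ricci harm soliton non rigid shrink}} (non-rigid implies shrinking) with Theorem 1.1 of \cite{YZ} (compact shrinking implies gradient). One small remark: your sketch of the mechanism behind \cite{YZ} via monotonicity of the $\mathcal{W}$-entropy along the flow differs from how the paper fills in the details --- it uses M\"{u}ller's elliptic existence of the M\"{u}ller-Perelman potential satisfying \eqref{eq soddisfatta da muller perelman potential} together with the Derdzinski-type integral formula \eqref{form derdinzky} of \hyperref[prop form derd]{Proposition \ref*{prop form derd}} --- but since you invoke \cite{YZ} as a black box this does not affect the validity of your argument.
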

	
	In the rest of this Section, for completeness and for the reader convenience, we provide some details regarding the proof of Theorem 1.1 of \cite{YZ}.
	\begin{rmk}
		The key point of Theorem 1.1 of \cite{YZ} is to guarantee the existence of a smooth function $f$ on a compact shrinking harmonic-Ricci soliton $(M,g)$ such that
		\begin{equation}\label{eq soddisfatta da muller perelman potential}
		S^{\varphi}+2\Delta f-|\nabla f|^2+2\lambda f \quad \mbox{ is constant on } M.
		\end{equation}
		
		Notice that the existence of such function $f$ had been established by R. M\"{u}ller in Section 7.3 of \cite{M}, where he proved the existence on any compact Riemannian manifold $(M,\widetilde{g})$ of a unique $v\in\mathcal{C}^{\infty}(M)$ positive normalized eigenvector of
		\begin{equation*}
		L(v)=-4\widetilde{\Delta} v+\left(\widetilde{S}^{\varphi}-\frac{m}{2}\log(4\pi)-m\right)v-2v\log v,
		\end{equation*}
		where we denoted by $\widetilde{\Delta}$ and $\widetilde{S}^{\varphi}$, respectively, the Laplacian and the $\varphi$-scalar curvature evaluated with respect to the metric $\widetilde{g}$. By setting $v=e^{-\frac{f}{2}}$ the above yields the constancy on $M$ of
		\begin{equation*}
		\widetilde{S}^{\varphi}+2\widetilde{\Delta} f-|\widetilde{\nabla} f|_{\widetilde{g}}^2+f.
		\end{equation*}
		By setting $\widetilde{g}:=\beta^2g$, for any positive constant $\beta$, the above gives the constancy of
		\begin{equation*}
		S^{\varphi}+2\Delta f-|\nabla f|^2+\beta^2f.
		\end{equation*}
		To obtain \eqref{eq soddisfatta da muller perelman potential} it is sufficient to choose $\beta^2=2\lambda$, and this can be done since the harmonic-Ricci soliton is shrinking.
	\end{rmk}
	\begin{dfn}
		We call the unique (up to an additive constant) function $f$ such that \eqref{eq soddisfatta da muller perelman potential} holds the {\em M\"{u}ller-Perelman potential}.
	\end{dfn}
	\begin{rmk}
		Once we have the existence of the M\"{u}ller-Perelman potential on a compact shrinking harmonic-Ricci soliton $(M,g)$ we can prove Theorem 1.1 of \cite{YZ} relying on the validity of
		\begin{equation}\label{form derdinzky}
		\int_M[|h_1|^2-\langle h_1,b_1\rangle+\alpha|h_2|^2-\alpha\langle h_2,b_2\rangle]e^{-f}=\frac{1}{2}\int_M\left(S^{\varphi}+2\Delta f-|\nabla f|^2+2\lambda f\right)\mbox{div}(e^{-f}Y),
		\end{equation}
		where
		\begin{equation}\label{def di Y per thm derd}
			Y:=\nabla h-X,
		\end{equation}
		\begin{equation}\label{def h 1 e h 2}
		h_1:=\mbox{Ric}^{\varphi}+\mbox{Hess}(f)-\lambda g, \quad h_2:=\tau(\varphi)-d\varphi(\nabla f)
		\end{equation}
		and
		\begin{equation}\label{def di b1 e b 2}
		b_1:=\mbox{Ric}^{\varphi}+\frac{1}{2}\mathcal{L}_Xg-\lambda g, \quad h_2:=\tau(\varphi)-d\varphi(X),
		\end{equation}
		whose proof (that is computational) is postponed to \hyperref[prop form derd]{Proposition \ref*{prop form derd}} below. Indeed, let $(M,g)$ be a compact shrinking harmonic-Ricci soliton with potential $X$ and let $f$ be the M\"{u}ller-Perelman potential. Then $b_1=0=b_2$ and thus \eqref{form derdinzky} gives, using \eqref{eq soddisfatta da muller perelman potential},
		\begin{equation*}
		\int_M(|h_1|^2+\alpha|h_2|^2)e^{-f}=0.
		\end{equation*}
		Hence, since $\alpha>0$ we have $h_1=0=h_2$, that is,
		\begin{equation*}
		\begin{dcases}
		\mbox{Ric}^{\varphi}+\mbox{Hess}(f)=\lambda g\\
		\tau(\varphi)=d\varphi(\nabla f).
		\end{dcases}
		\end{equation*}
		The above shows the compact shrinking harmonic-Ricci soliton is gradient.
	\end{rmk}
	
	\begin{rmk}
		Recall that on any compact Riemannian manifold $(M,g)$ every vector field $X\in\mathfrak{X}(M)$ can be decomposed as
		\begin{equation*}
			X=\nabla h+Y,
		\end{equation*}
		where $Y$ is a divergence free vector field and the smooth function $h\in\mathcal{C}^{\infty}(M)$, defined up to an additive constant, is called Hodge-de Rham potential of $X$, see for instance \cite{ABR}.
		
		As pointed out in Proposition 2.1 of \cite{YZ}, the M\"{u}ller-Perelman potential coincide with the Hodge-de Rham potential, since $f-h$ is a harmonic function.
	\end{rmk}
		 
	\begin{prop}\label{prop form derd}
		For every compact Riemannian manifold $(M,g)$, $X\in\mathfrak{X}(M)$ and $f\in\mathcal{C}^{\infty}(M)$, by setting $Y$ as in \eqref{def di Y per thm derd}, $h_1$ and $h_2$ as in \eqref{def h 1 e h 2} and $b_1$ and $b_2$ as in \eqref{def di b1 e b 2}, formula \eqref{form derdinzky} holds.
	\end{prop}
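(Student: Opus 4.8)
The plan is to collapse the integral identity \eqref{form derdinzky} onto a single pointwise first–order identity for the gradient of the function $W:=S^{\varphi}+2\Delta f-|\nabla f|^2+2\lambda f$ that appears on its right–hand side, and then to integrate by parts twice against the weight $e^{-f}$, exploiting that $M$ is compact without boundary so that every exact term integrates to zero. Throughout I work in a local orthonormal coframe and write $(h_1)_{ij}=R^{\varphi}_{ij}+f_{ij}-\lambda\delta_{ij}$ and $(h_2)^a=\varphi^a_{ii}-\varphi^a_if_i$ for the components of $h_1$ and $h_2$.

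The first and decisive step is the pointwise identity
\begin{equation*}
\tfrac12 W_j=(h_1)_{ij,i}-(h_1)_{ij}f_i+\alpha\,\varphi^a_j(h_2)^a,
\end{equation*}
which I would verify by expanding the right–hand side. For the divergence I use $(h_1)_{ij,i}=R^{\varphi}_{ij,i}+f_{iji}$, evaluating $R^{\varphi}_{ij,i}=\tfrac12 S^{\varphi}_j-\alpha\varphi^a_{ii}\varphi^a_j$ by the generalized Schur identity \eqref{div of phi Ricci} and $f_{iji}=(\Delta f)_j+R_{tj}f_t$ by commuting third derivatives through \eqref{comm rule der terza funzione} and using the symmetry $R_{tiji}=R_{tj}$. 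Writing $R_{tj}=R^{\varphi}_{tj}+\alpha\varphi^a_t\varphi^a_j$ and $(h_1)_{ij}=R^{\varphi}_{ij}+f_{ij}-\lambda\delta_{ij}$, the tension terms $\alpha\varphi^a_{ii}\varphi^a_j$ cancel, the $R^{\varphi}_{ij}f_i$ contributions cancel against the $\varphi$–free part of $R_{tj}f_t$, and the residual map terms $\alpha\varphi^a_tf_t\varphi^a_j$ cancel against the $\alpha\varphi^a_j\varphi^a_kf_k$ produced by $(h_2)^a$; what survives is exactly $\tfrac12 S^{\varphi}_j+(\Delta f)_j-f_{ij}f_i+\lambda f_j=\tfrac12 W_j$. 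Keeping track of these $\varphi$–contributions is the main obstacle of the proof; everything after it is integration by parts.

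Given the identity, the divergence theorem first turns the right–hand side of \eqref{form derdinzky} into $-\tfrac12\int_M\langle\nabla W,Y\rangle e^{-f}$. Substituting the identity and integrating the term $\int_M(h_1)_{ij,i}Y_je^{-f}$ by parts, so as to move the derivative onto $Y_je^{-f}$, the differentiation of the weight produces a factor $(e^{-f})_{,i}=-f_ie^{-f}$ whose contribution $(h_1)_{ij}f_iY_j$ cancels against the $-(h_1)_{ij}f_i$ term of the identity; using the symmetry of $h_1$ to write $(h_1)_{ij}Y_{j,i}=\langle h_1,\tfrac12\mathcal{L}_Yg\rangle$ then leaves
\begin{equation*}
\tfrac12\int_M W\,\mbox{div}(e^{-f}Y)=\int_M\big\langle h_1,\tfrac12\mathcal{L}_Yg\big\rangle e^{-f}-\alpha\int_M\langle h_2,d\varphi(Y)\rangle e^{-f}.
\end{equation*}

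The proof concludes by identifying the two integrands tensorially. Using $Y=\nabla h-X$ from \eqref{def di Y per thm derd} together with $\nabla h=\nabla f$ (equivalently $Y=\nabla f-X$), one has $\tfrac12\mathcal{L}_Yg=\mbox{Hess}(f)-\tfrac12\mathcal{L}_Xg$, which is precisely $h_1-b_1$ by \eqref{def h 1 e h 2} and \eqref{def di b1 e b 2}; likewise $d\varphi(Y)=d\varphi(\nabla f-X)=b_2-h_2$. Substituting these and expanding $\langle h_1,h_1-b_1\rangle=|h_1|^2-\langle h_1,b_1\rangle$ and $-\alpha\langle h_2,b_2-h_2\rangle=\alpha|h_2|^2-\alpha\langle h_2,b_2\rangle$ recovers exactly the left–hand side of \eqref{form derdinzky}, as desired.
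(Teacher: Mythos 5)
Your proof is correct and follows essentially the same route as the paper's: the same ingredients (the generalized Schur identity \eqref{div of phi Ricci}, the commutation rule \eqref{comm rule der terza funzione}, integration by parts against the weight $e^{-f}$, and the final identifications $\tfrac12\mathcal{L}_Yg=h_1-b_1$, $d\varphi(Y)=b_2-h_2$), merely organized in the reverse direction, with your pointwise identity $\tfrac12 W_j=(h_1)_{ij,i}-(h_1)_{ij}f_i+\alpha\varphi^a_j(h_2)^a$ playing the role of the paper's integral identity \eqref{id per mostr form derd} and with the $\lambda$-terms absorbed from the start rather than converted at the end via $\int_M\mbox{div}(Y)e^{-f}=-\int_Mf\,\mbox{div}(e^{-f}Y)$. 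Two incidental merits of your write-up: your signs are consistent throughout, whereas \eqref{id per mostr form derd} and the paper's subsequent decomposition of $\langle h_1,h_1-b_1\rangle+\alpha\langle h_2,h_2-b_2\rangle$ carry compensating sign slips on the $\alpha$-term (both occurrences of $+\alpha\langle\tau(\varphi)-d\varphi(\nabla f),d\varphi(Y)\rangle$ should be $-\alpha\langle\tau(\varphi)-d\varphi(\nabla f),d\varphi(Y)\rangle$, consistent with $d\varphi(Y)=b_2-h_2$ rather than $h_2-b_2$); and you make explicit the reading $\nabla h=\nabla f$ of \eqref{def di Y per thm derd}, which the paper's own identification step also tacitly requires.
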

	\begin{proof}
		We claim the validity of
		\begin{equation}\label{id per mostr form derd}
		\begin{aligned}
		\int_M&\left[\left\langle \mbox{Ric}^{\varphi}+\mbox{Hess}(f),\frac{1}{2}\mathcal{L}_Yg\right\rangle+\alpha\langle \tau(\varphi)-d\varphi(\nabla f),d\varphi(Y)\rangle\right] e^{-f}\\
		&=\int_M\left(\frac{1}{2}S^{\varphi}+\Delta f-\frac{1}{2}|\nabla f|^2\right)\mbox{div}(e^{-f}Y).
		\end{aligned}
		\end{equation}
		We have, in a local orthonormal coframe,
		\begin{equation}\label{dim form derd}
		\begin{aligned}
		\left\langle \mbox{Ric}^{\varphi}+\mbox{Hess}(f),\frac{1}{2}\mathcal{L}_Yg\right\rangle e^{-f}=&(R^{\varphi}_{ij}+f_{ij})Y^i_je^{-f}\\
		=&[(R^{\varphi}_{ij}+f_{ij})Y^ie^{-f}]_j-(R^{\varphi}_{ij}+f_{ij})_jY^ie^{-f}+(R^{\varphi}_{ij}+f_{ij})f_jY^ie^{-f}.
		\end{aligned}
		\end{equation}
		Using Schur's lemma \eqref{div of phi Ricci}, the commutation relation \eqref{comm rule der terza funzione} and the definition of the $\varphi$-Ricci tensor
		\begin{align*}
		(R^{\varphi}_{ij}+f_{ij})_j=&\frac{1}{2}S^{\varphi}_i-\alpha\varphi^a_{jj}\varphi^a_i+f_{jij}=\frac{1}{2}S^{\varphi}_i-\alpha\varphi^a_{jj}\varphi^a_i+(\Delta f)_i+R_{ij}f_j\\
		=&\left(\frac{1}{2}S^{\varphi}+\Delta f\right)_i-\alpha\varphi^a_{jj}\varphi^a_i+R_{ij}^{\varphi}f_j+\alpha\varphi^a_i\varphi^a_jf_j,
		\end{align*}
		hence we easily get
		\begin{equation}\label{primo pezzo dim derd}
		\begin{aligned}
		(R^{\varphi}_{ij}+f_{ij})_jY^ie^{-f}=&\left[\left(\frac{1}{2}S^{\varphi}+\Delta f\right)e^{-f}Y^i\right]_i-\left(\frac{1}{2}S^{\varphi}+\Delta f\right)(e^{-f}Y^i)_i\\
		&+R^{\varphi}_{ij}f_jY^i-\alpha(\varphi^a_{jj}-\varphi^a_jf_j)\varphi^a_iY^ie^{-f}.
		\end{aligned}
		\end{equation}
		Furthermore
		\begin{equation}\label{secondo pezzo dim derd}
		f_{ij}f_jY^ie^{-f}=\frac{1}{2}|\nabla f|^2_iY^ie^{-f}=\frac{1}{2}\left(|\nabla f|^2e^{-f}Y^i\right)_i-\frac{1}{2}|\nabla f|^2\mbox{div}(e^{-f}Y).
		\end{equation}
		The claim follows by plugging \eqref{primo pezzo dim derd} and \eqref{secondo pezzo dim derd} into \eqref{dim form derd}, using the divergence theorem and rearranging the terms.
		
		Clearly have
		\begin{equation*}
		\langle h_1,h_1-b_1\rangle+\alpha\langle h_2,h_2-b_2\rangle=|h_1|^2-\langle h_1,b_1\rangle+\alpha|h_2|^2-\alpha\langle h_2,b_2\rangle
		\end{equation*}
		and on the other hand, using the definitions \eqref{def h 1 e h 2}, \eqref{def di b1 e b 2} and \eqref{def di Y per thm derd},
		\begin{equation*}
		\langle h_1,h_1-b_1\rangle+\alpha\langle h_2,h_2-b_2\rangle=\left\langle \mbox{Ric}^{\varphi}+\mbox{Hess}(f),\frac{1}{2}\mathcal{L}_{Y}g\right\rangle +\alpha\langle \tau(\varphi)-d\varphi(\nabla f),d\varphi(Y)\rangle-\lambda\mbox{div}(Y),
		\end{equation*}
		then combining the above relations with \eqref{id per mostr form derd} we get
		\begin{equation*}
		\int_M[|h_1|^2-\langle h_1,b_1\rangle+\alpha|h_2|^2-\alpha\langle h_2,b_2\rangle]e^{-f}=\int_M\left(\frac{1}{2}S^{\varphi}+\Delta f-\frac{1}{2}|\nabla f|^2\right)\mbox{div}(e^{-f}Y)\mu-\lambda\int_M\mbox{div}(Y)e^{-f}.
		\end{equation*}
		The validity of \eqref{form derdinzky} follows from the above, since using that
		\begin{equation*}
		\mbox{div}(e^{-f}Y)=\mbox{div}(Y)e^{-f}-e^{-f}\langle \nabla f,Y\rangle, \quad \mbox{div}(fe^{-f}Y)=f\mbox{div}(e^{-f}Y)+e^{-f}\langle \nabla f,Y\rangle
		\end{equation*}
		we get
		\begin{equation*}
		\int_M\mbox{div}(Y)e^{-f}=\int_M\langle \nabla f,Y\rangle e^{-f}=-\int_Mf\mbox{div}(e^{-f}Y). \qedhere
		\end{equation*}
	\end{proof}

	\subsection{Rigidity of $\varphi$-Cotton flat compact harmonic-Ricci solitons}\label{sect rig caso compatto phi cotton}
	
	Using \hyperref[thm solitone ricci harm compatto non rigido grad shrink]{Theorem \ref*{thm solitone ricci harm compatto non rigido grad shrink}} we able to prove the next
	\begin{thm}\label{thm phi cotton flat compact ricci solitons are rigid}
		Let $(M,g)$ be compact harmonic-Ricci soliton of dimension $m\geq 3$. If $(M,g)$ is $\varphi$-Cotton flat then $(M,g)$ is rigid.
	\end{thm}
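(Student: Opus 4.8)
The plan is to argue by contradiction. Assume \eqref{harmonic ricci soliton euqation} is $\varphi$-Cotton flat but not rigid; by \hyperref[thm solitone ricci harm compatto non rigido grad shrink]{Theorem \ref*{thm solitone ricci harm compatto non rigido grad shrink}} it is then gradient and shrinking, so that \eqref{gradient harm ricci soliton in a local orth coframe} holds with $\lambda>0$. The first observation is that $C^{\varphi}=0$ forces $\varphi$ to be harmonic: the trace \eqref{traccia phi cotton} gives $\alpha\varphi^a_{jj}\varphi^a_i=C^{\varphi}_{jji}=0$, so $\varphi$ is conservative, and since the soliton is gradient, \hyperref[rmk soliton con phi conser in realta e arm]{Remark \ref*{rmk soliton con phi conser in realta e arm}} yields $\tau(\varphi)=0$; in particular every term carrying $\varphi^a_{kk}$ drops out of the identities of \hyperref[sect calc]{Section \ref*{sect calc}}. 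By \hyperref[rmk rigido in caso compatto se e solo se phi curv scalar cost]{Remark \ref*{rmk rigido in caso compatto se e solo se phi curv scalar cost}} rigidity is equivalent to the constancy of $S^{\varphi}$, so it suffices to prove $\nabla S^{\varphi}=0$, which will contradict non-rigidity.

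Since $C^{\varphi}=0$, formula \eqref{norma F phi quadro con C phi zero} collapses to $|F^{\varphi}|^2=\frac{1}{2(m-1)}|\nabla S^{\varphi}|^2$, so the whole problem reduces to showing $\int_M|F^{\varphi}|^2e^{-f}=0$. The natural starting point is \eqref{norma F quadro caso compatto}, namely $\tfrac12|F^{\varphi}|^2=f_tR_{tikj}R^{\varphi}_{ij,k}=F^{\varphi}_{ijk}R^{\varphi}_{ij,k}$, where $F^{\varphi}_{ijk}=R_{tikj}f_t$ by \eqref{form commutazione der covariante per ricci phi in solitoni gradiente}. Integrating against the weight $e^{-f}$ on the compact manifold $M$ and integrating by parts in the index $k$ via the weighted divergence theorem, I would split $\int_M F^{\varphi}_{ijk}R^{\varphi}_{ij,k}e^{-f}$ into three contributions arising from $(R_{tikj}f_te^{-f})_{,k}$: a Hessian piece $R_{tikj}f_{tk}$, a first-order piece $R_{tikj}f_tf_k$, and a divergence piece $R_{tikj,k}f_t$.

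Each piece is then rewritten with the machinery already in place. The Hessian piece is treated through the soliton equation $f_{tk}=\lambda\delta_{tk}-R^{\varphi}_{tk}$, which produces a term $\lambda R_{ij}$ and the cubic curvature term $R_{tikj}R^{\varphi}_{tk}$; the first-order piece is evaluated using $F^{\varphi}_{ijk}f_k=\frac{1}{2(m-1)}(\langle\nabla S^{\varphi},\nabla f\rangle\delta_{ij}-S^{\varphi}_jf_i)$ together with the gradient identity \eqref{form gradiente phi curv scalare in solitoni gradiente}; and the divergence piece is handled by the symmetry $R_{tikj}=R_{kjti}$ and the contracted second Bianchi identity in the soliton form \eqref{div riem per solioni harm gradi}, which replaces $R_{tikj,k}$ by first derivatives of $\mbox{Ric}^{\varphi}$ plus the genuinely new correction terms $\alpha(\varphi^a_{ij}\varphi^a_k-\varphi^a_{ik}\varphi^a_j)$. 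After collecting everything, all contributions are expressed through $\int_M R^{\varphi}_{ij}R_{tikj}R^{\varphi}_{tk}e^{-f}$, $\int_M|F^{\varphi}|^2e^{-f}$, and weighted integrals of $d\varphi$-quantities.

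The hard part is the cubic curvature term $\int_M R^{\varphi}_{ij}R_{tikj}R^{\varphi}_{tk}e^{-f}$, which carries no sign on its own. I would decompose $R_{tikj}$ via the $\varphi$-Weyl tensor \eqref{comp phi weyl}: the $\varphi$-Schouten part contributes quantities that are polynomial in $\mbox{Ric}^{\varphi}$ and $S^{\varphi}$, while the residual $\varphi$-Weyl contraction $W^{\varphi}_{tikj}R^{\varphi}_{ij}R^{\varphi}_{tk}$ is precisely where $C^{\varphi}=0$ must be exploited, through the divergence formula \eqref{div di phi Weyl}, which for a harmonic $\varphi$-Cotton flat map reduces to $W^{\varphi}_{tijk,t}=\alpha(\varphi^a_{ij}\varphi^a_k-\varphi^a_{ik}\varphi^a_j)$. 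A further integration by parts should then trade this algebraic term for controllable first-order and $\alpha$-quantities. The delicate point, with no counterpart in the Ricci-soliton argument of \cite{FG}, is to check that all the resulting $d\varphi$- and $\nabla d\varphi$-corrections assemble with a definite sign; using $\alpha>0$ and $\lambda>0$ I expect the final identity to display $\int_M|F^{\varphi}|^2e^{-f}$ as a sum of terms of one sign, forcing $|F^{\varphi}|^2\equiv0$, hence $\nabla S^{\varphi}=0$ and, by \hyperref[rmk rigido in caso compatto se e solo se phi curv scalar cost]{Remark \ref*{rmk rigido in caso compatto se e solo se phi curv scalar cost}}, rigidity.
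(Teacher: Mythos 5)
Your preparatory reductions are exactly the paper's: the contradiction set-up, the reduction to a gradient shrinking soliton via Theorem \ref{thm solitone ricci harm compatto non rigido grad shrink}, the harmonicity of $\varphi$ extracted from $C^{\varphi}=0$ through \eqref{traccia phi cotton}, the equivalence of rigidity with constancy of $S^{\varphi}$, and the reduction to showing $\int_M|F^{\varphi}|^2e^{-f}=0$ via \eqref{norma F phi quadro con C phi zero}. The gap is the entire analytic core, which you leave as a hope: that after your integrations by parts ``all the resulting corrections assemble with a definite sign''. They cannot. If one carries out your plan --- integrate $\frac{1}{2}|F^{\varphi}|^2=f_tR_{tikj}R^{\varphi}_{ij,k}$ (from \eqref{norma F quadro caso compatto}) against $e^{-f}$, integrate by parts in $k$, rewrite the divergence piece via \eqref{div riem per solioni harm gradi} (its quadratic-in-$\nabla f$ part cancels the first-order piece $\int_M f_tf_kR_{tikj}R^{\varphi}_{ij}e^{-f}$ exactly), and substitute the soliton equation into the Hessian piece --- one lands, using $\tau(\varphi)=0$, on
\begin{equation*}
\frac{1}{2}\int_M|F^{\varphi}|^2e^{-f}=-\lambda\int_M|\mbox{Ric}^{\varphi}|^2e^{-f}+\int_MR_{tikj}R^{\varphi}_{tk}R^{\varphi}_{ij}e^{-f}-\alpha\int_M\varphi^a_t\varphi^a_iR^{\varphi}_{tj}R^{\varphi}_{ij}e^{-f}.
\end{equation*}
But the cubic term is not a correction that can be dominated: contracting \eqref{f laplacian phi ricci per solitone grad} (with $\tau(\varphi)=0$) against $\mbox{Ric}^{\varphi}$ and integrating with weight $e^{-f}$ shows that
\begin{equation*}
\int_MR_{tikj}R^{\varphi}_{tk}R^{\varphi}_{ij}e^{-f}=\frac{1}{2}\int_M|\nabla\mbox{Ric}^{\varphi}|^2e^{-f}+\lambda\int_M|\mbox{Ric}^{\varphi}|^2e^{-f}+\alpha\int_M\varphi^a_t\varphi^a_iR^{\varphi}_{tj}R^{\varphi}_{ij}e^{-f},
\end{equation*}
so your identity collapses to $\int_M|F^{\varphi}|^2e^{-f}=\int_M|\nabla\mbox{Ric}^{\varphi}|^2e^{-f}$: an equality between two nonnegative quantities, from which no vanishing can be forced by sign considerations, whatever use is made of $\lambda>0$ and $\alpha>0$. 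The sign-indefinite cubic term secretly contains $+\frac{1}{2}|\nabla\mbox{Ric}^{\varphi}|^2$, a quantity of exactly the same size as the one you are trying to kill.

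The missing idea is how the paper closes the argument. It derives that same equality (this is \eqref{formula integrale nel caso generale} in the harmonic case, obtained from the Weitzenb\"{o}ck identity $\frac{1}{2}\Delta_f|\mbox{Ric}^{\varphi}|^2=|\nabla\mbox{Ric}^{\varphi}|^2+\langle\Delta_f\mbox{Ric}^{\varphi},\mbox{Ric}^{\varphi}\rangle$, which is what brings $|\nabla\mbox{Ric}^{\varphi}|^2$ into play at all), and then concludes not by a sign analysis but by comparing the two sides through $|\nabla S^{\varphi}|^2$: Cauchy--Schwarz gives $|\nabla\mbox{Ric}^{\varphi}|^2\geq\frac{1}{m}|\nabla S^{\varphi}|^2$, while $C^{\varphi}=0$ gives $|F^{\varphi}|^2=\frac{1}{2(m-1)}|\nabla S^{\varphi}|^2$, whence $\frac{m-2}{2m(m-1)}\int_M|\nabla S^{\varphi}|^2e^{-f}\leq0$ and $S^{\varphi}$ is constant. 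Note that this comparison is the only place where $m\geq3$ enters (namely $\frac{1}{m}>\frac{1}{2(m-1)}$ if and only if $m>2$); in your outline the dimensional hypothesis plays no identifiable role, which is a symptom of the gap. Also, $\lambda>0$ is never needed in this last step. Your computation, completed, would serve as a legitimate alternative derivation of \eqref{formula integrale nel caso generale} for harmonic $\varphi$, but without the Cauchy--Schwarz comparison against $|\nabla S^{\varphi}|^2$ it cannot finish the proof.
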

	\begin{proof}
		Assume by contradiction that $(M,g)$ is not rigid. From \hyperref[thm solitone ricci harm compatto non rigido grad shrink]{Theorem \ref*{thm solitone ricci harm compatto non rigido grad shrink}} we can assume that $(M,g)$ is a gradient (shrinking) harmonic-Ricci soliton, i.e,
		\begin{equation}\label{harmonic gradient ricci soliton euqation per dim caso compatto}
		\begin{dcases}
		\mbox{Ric}^{\varphi}+\mbox{Hess}(f)=\lambda g\\
		\tau(\varphi)=d\varphi(\nabla f)
		\end{dcases}
		\end{equation}
		holds for some $f\in\mathcal{C}^{\infty}(M)$, $\lambda>\mathbb{R}$, $\varphi:M\to N$ smooth, where $(N,\langle\,,\,\rangle_N)$ is a Riemannian manifold and $\alpha>0$.
		
		First of all, relying only on the valdity of \eqref{harmonic gradient ricci soliton euqation per dim caso compatto} (and without using that $\lambda>0$), we show that
		\begin{equation}\label{formula integrale nel caso generale}
		\int_M|\nabla \mbox{Ric}^{\varphi}|^2e^{-f}=\int_M|F^{\varphi}|^2e^{-f}+2\alpha\int_M\varphi^a_{kki}\varphi^a_jR^{\varphi}_{ij}e^{-f},
		\end{equation}
		where $F^{\varphi}$ is the tensor defined in \eqref{def di F phi}. Afterwards we will see how the validity of above implies rigidity assuming that $(M,g)$ is $\varphi$-Cotton flat.
		
		The following Weitzenb\"{o}ck identity holds
		\begin{equation*}
		\frac{1}{2}\Delta_f|\mbox{Ric}^{\varphi}|^2=|\nabla \mbox{Ric}^{\varphi}|^2+\langle \Delta_f\mbox{Ric}^{\varphi},\mbox{Ric}^{\varphi}\rangle.
		\end{equation*}
		Multiplying the above by $e^{-f}$, integrating it on $M$ and using the divergence theorem we get
		\begin{equation*}
		\int_M|\nabla \mbox{Ric}^{\varphi}|^2e^{-f}=-\int_M\langle \Delta_f\mbox{Ric}^{\varphi},\mbox{Ric}^{\varphi}\rangle e^{-f}.
		\end{equation*}
		Contracting \eqref{f laplacian phi ricci per solitone grad scritt con hess f} against $\mbox{Ric}^{\varphi}$ we get
		\begin{equation*}
		\begin{aligned}
		\int_M\langle \Delta_f\mbox{Ric}^{\varphi},\mbox{Ric}^{\varphi}\rangle e^{-f}=&\int_MR^{\varphi}_{ij}\Delta_f R^{\varphi}_{ij}e^{-f}\\
		=&2\int_MR_{tikj}f_{kt}R^{\varphi}_{ij}e^{-f}-2\alpha\int_M\varphi^a_i\varphi^a_kf_{kj}R^{\varphi}_{ij}e^{-f}-2\alpha \int_M\varphi^a_{ij}R^{\varphi}_{ij}\varphi^a_{kk}e^{-f},
		\end{aligned}
		\end{equation*}
		hence the above relation gives
		\begin{equation}\label{integrale di ric phi contro f laplaciano di ric phi}
		\int_M|\nabla \mbox{Ric}^{\varphi}|^2e^{-f}=-2\int_MR_{tikj}f_{kt}R^{\varphi}_{ij}e^{-f}+2\alpha\int_M\varphi^a_i\varphi^a_kf_{kj}R^{\varphi}_{ij}e^{-f}+2\alpha \int_M\varphi^a_{ij}R^{\varphi}_{ij}\varphi^a_{kk}e^{-f}.
		\end{equation}
		
		Using the divergence theorem
		\begin{equation*}
		-\int_MR_{tikj}f_{kt}R^{\varphi}_{ij}e^{-f}=\int_M(R_{tikj}R^{\varphi}_{ij}e^{-f})_tf_k=\int_M(R_{tikj}e^{-f})_tR^{\varphi}_{ij}f_k+\int_Mf_kR_{tikj}R^{\varphi}_{ij,t}e^{-f},
		\end{equation*}
		that gives
		\begin{equation}\label{thm caso compatto form integrale primo pezzo}
		-2\int_MR_{tikj}f_{kt}R^{\varphi}_{ij}e^{-f}=2\int_M(R_{tikj}e^{-f})_tR^{\varphi}_{ij}f_k+2\int_Mf_tR_{tikj}R^{\varphi}_{ij,k}e^{-f}.
		\end{equation}
		Plugging \eqref{norma F quadro caso compatto} and \eqref{div riem con esponeziale caso compatto} into \eqref{thm caso compatto form integrale primo pezzo} we get
		\begin{align*}
		-2\int_MR_{tikj}f_{kt}R^{\varphi}_{ij}e^{-f}=&2\alpha\int_M(\varphi^a_{ik}\varphi^a_j-\varphi^a_{ij}\varphi^a_k)R^{\varphi}_{ij}f_ke^{-f}+\int_M|F^{\varphi}|^2e^{-f}\\
		=&\int_M|F^{\varphi}|^2e^{-f}+2\alpha\int_M\varphi^a_{ik}\varphi^a_jR^{\varphi}_{ij}f_ke^{-f}-2\alpha\int_M\varphi^a_{ij}\varphi^a_{kk}R^{\varphi}_{ij}e^{-f}
		\end{align*}
		and thus, from \eqref{integrale di ric phi contro f laplaciano di ric phi} we obtain,
		\begin{align*}
		\int_M|\nabla \mbox{Ric}^{\varphi}|^2e^{-f}=\int_M|F^{\varphi}|^2e^{-f}+2\alpha\int_M\varphi^a_{ik}\varphi^a_jR^{\varphi}_{ij}f_ke^{-f}+2\alpha\int_M\varphi^a_i\varphi^a_kf_{kj}R^{\varphi}_{ij}e^{-f}.
		\end{align*}
		Taking the covariant derivative of the second equation of \eqref{harmonic gradient ricci soliton euqation per dim caso compatto} we get $\varphi^a_{kki}=\varphi^a_{ki}f_k+\varphi^a_kf_{ki}$, hence from the above we conclude that \eqref{formula integrale nel caso generale} holds.
		
		Now we assume $C^{\varphi}=0$. Then $\varphi$ is harmonic and thus \eqref{formula integrale nel caso generale} reduces to
		\begin{equation*}
		\int_M|\nabla \mbox{Ric}^{\varphi}|^2e^{-f}=\int_M|F^{\varphi}|^2e^{-f},
		\end{equation*}
		so that, using \eqref{norma F phi quadro con C phi zero},
		\begin{equation*}
		\int_M|\nabla \mbox{Ric}^{\varphi}|^2e^{-f}=\frac{1}{2(m-1)}\int_M|\nabla S^{\varphi}|^2e^{-f}.
		\end{equation*}
		
		On the other hand via Cauchy-Schwarz inequality
		\begin{equation*}\label{dis per nabla ric phi}
		|\nabla \mbox{Ric}^{\varphi}|^2\geq \frac{1}{m}|\nabla S^{\varphi}|^2,
		\end{equation*}
		hence from the above we conclude that
		\begin{equation*}
		\frac{m-2}{2m(m-1)}\int_M|\nabla S^{\varphi}|^2e^{-f}\leq 0.
		\end{equation*}
		Then, since $m\geq 3$, we conclude that $S^{\varphi}$ is constant on $M$. Then, in view of \hyperref[rmk rigido in caso compatto se e solo se phi curv scalar cost]{Remark \ref*{rmk rigido in caso compatto se e solo se phi curv scalar cost}}, $(M,g)$ is rigid. Contradiction, hence the proof is concluded.
	\end{proof}
	
	\section{Complete non-compact gradient shrinking harmonic-Ricci solitons}\label{section noncompac shirnking}
	
	In this section $(M,g)$ is a complete non-compact harmonic-Ricci soliton, i.e., there exist $f\in\mathcal{C}^{\infty}(M)$, $\alpha,\lambda>0$, $\varphi:M\to N$ smooth, where $(N,\langle\,,\,\rangle_N)$ is a Riemannian manifold, such that
	\begin{equation}\label{sol equat non compact conti con cutoff}
	\begin{dcases}
	\mbox{Ric}^{\varphi}+\mbox{Hess}(f)=\lambda g\\
	\tau(\varphi)=d\varphi(\nabla f).
	\end{dcases}
	\end{equation}
	
	\begin{rmk}\label{rmk cutoff}
		Recall that on any complete Riemannian manifold there exists, for every $R>0$ and $p\in M$, a cutoff function $\rho_R$ such that $0\leq \rho_R\leq 1$, $\rho_R=1$ on $B_p(R)$, $\rho_R=0$ on $M\setminus B_p(2R)$ and $|\nabla \rho_R|\leq \frac{C}{R}$, where $C$ is a positive constant independent on $R>0$ and $p\in M$.
	\end{rmk}
	
	The following technical lemma shall be useful later on in the proof of \hyperref[thm caso non compatto con phi cotton nullo]{Theorem \ref*{thm caso non compatto con phi cotton nullo}}.
	\begin{lemma}
		Let $(M,g)$ be a complete non-compact gradient harmonic-Ricci soliton, that is, \eqref{sol equat non compact conti con cutoff} holds for some $f\in\mathcal{C}^{\infty}(M)$, $\alpha,\lambda>0$, $\varphi:M\to N$ smooth, where $(N,\langle\,,\,\rangle_N)$ is a Riemannian manifold. For every positive constant $\mu$ we have
		\begin{equation}\label{ric phi in l 2 e anche tensione}
		\int_M|\mbox{Ric}^{\varphi}|^2e^{-\mu f},\quad \int_M|\tau(\varphi)|^2e^{-\mu f}<+\infty.
		\end{equation}
		In particular, for $\mu=1$ we have the following relation
		\begin{equation}\label{eq che implica ric phi in l 2}
		\int_M|\mbox{Ric}^{\varphi}|^2e^{-f}+\alpha\int_M|\tau(\varphi)|^2e^{-f}= \lambda\int_MS^{\varphi}e^{-f}<+\infty.
		\end{equation}
		Furthermore
		\begin{equation}\label{grad di phi curv scalare sta in l 2}
			\int_M|\nabla S^{\varphi}|^2e^{-f}<+\infty.
		\end{equation}
	\end{lemma}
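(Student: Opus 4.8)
The plan is to use the weighted Bochner-type formula \eqref{f laplacian phi scalar curvature}, which I rewrite as $|\mbox{Ric}^{\varphi}|^2+\alpha|\tau(\varphi)|^2=\lambda S^{\varphi}-\tfrac12\Delta_f S^{\varphi}$, and to integrate it against the weight $e^{-\mu f}$ after inserting a squared cutoff $\rho_R^2$ as in Remark \ref{rmk cutoff}. Since $M$ is non-compact the integrations by parts produce cutoff and first-order terms that must be shown to be harmless; the whole argument rests on controlling them by means of the a priori polynomial-growth integrability \eqref{finitezza inte di s phi e nabla f quadro} together with the pointwise gradient bound coming from \eqref{form gradiente phi curv scalare in solitoni gradiente}. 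Indeed, from $\tfrac12 S^{\varphi}_i=R^{\varphi}_{ij}f_j$ and Cauchy--Schwarz one gets $|\nabla S^{\varphi}|\le 2|\mbox{Ric}^{\varphi}|\,|\nabla f|$ and $\langle\nabla f,\nabla S^{\varphi}\rangle=2R^{\varphi}_{ij}f_if_j$, whence $|\langle\nabla f,\nabla S^{\varphi}\rangle|\le 2|\mbox{Ric}^{\varphi}|\,|\nabla f|^2$. I also record that $S^{\varphi}\ge 0$ (by \eqref{stima phi curv scalare come monteanu}), so that from \eqref{stima curv scalar per scelta opportuna potenziale} one has $f\ge 0$ and $|\nabla f|^2\le 2\lambda f$.

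For the first step I integrate $\bigl(\lambda S^{\varphi}-\tfrac12\Delta_f S^{\varphi}\bigr)\rho_R^2 e^{-\mu f}$ over $M$. Using $e^{-f}\Delta_f u=\mbox{div}(e^{-f}\nabla u)$ and integrating by parts, the Laplacian term yields $-\int_M\rho_R e^{-\mu f}\langle\nabla\rho_R,\nabla S^{\varphi}\rangle+\tfrac{\mu-1}{2}\int_M\rho_R^2 e^{-\mu f}\langle\nabla f,\nabla S^{\varphi}\rangle$, so that
\begin{equation*}
\int_M\rho_R^2 e^{-\mu f}\bigl(|\mbox{Ric}^{\varphi}|^2+\alpha|\tau(\varphi)|^2\bigr)=\lambda\int_M\rho_R^2 e^{-\mu f}S^{\varphi}+\int_M\rho_R e^{-\mu f}\langle\nabla\rho_R,\nabla S^{\varphi}\rangle-\frac{\mu-1}{2}\int_M\rho_R^2 e^{-\mu f}\langle\nabla f,\nabla S^{\varphi}\rangle.
\end{equation*}
The main work, and the only real obstacle, is to dominate the last two terms. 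The cutoff term is bounded, via $|\nabla S^{\varphi}|\le 2|\mbox{Ric}^{\varphi}||\nabla f|$ and Young's inequality, by $\tfrac14\int_M\rho_R^2 e^{-\mu f}|\mbox{Ric}^{\varphi}|^2+4\int_M|\nabla\rho_R|^2|\nabla f|^2 e^{-\mu f}$; since $|\nabla\rho_R|\le C/R$ and $\int_M|\nabla f|^2 e^{-\mu f}<\infty$ by \eqref{finitezza inte di s phi e nabla f quadro}, the second summand is $O(R^{-2})$. The cross term (absent when $\mu=1$) is controlled using $|\langle\nabla f,\nabla S^{\varphi}\rangle|\le 2|\mbox{Ric}^{\varphi}||\nabla f|^2$ and Young with a small parameter, producing again $\tfrac14\int_M\rho_R^2 e^{-\mu f}|\mbox{Ric}^{\varphi}|^2$ plus a multiple of $\int_M|\nabla f|^4 e^{-\mu f}$, finite by \eqref{finitezza inte di s phi e nabla f quadro}. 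As $\rho_R$ has compact support, the left-hand side is finite for each fixed $R$, so the two $\tfrac14$-terms may be absorbed, giving a bound on $\tfrac12\int_M\rho_R^2 e^{-\mu f}|\mbox{Ric}^{\varphi}|^2+\alpha\int_M\rho_R^2 e^{-\mu f}|\tau(\varphi)|^2$ that is uniform in $R$ (the surviving right-hand side is controlled by $\lambda\int_M S^{\varphi}e^{-\mu f}$ and the finite $|\nabla f|^2,|\nabla f|^4$ integrals). Letting $R\to\infty$ by monotone convergence proves \eqref{ric phi in l 2 e anche tensione} for every $\mu>0$; alternatively, for $\mu\ge 1$ it is immediate from the case $\mu=1$, since $f\ge 0$ forces $e^{-\mu f}\le e^{-f}$.

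To upgrade the inequality to the exact identity \eqref{eq che implica ric phi in l 2} I return to $\mu=1$, where the cross term vanishes identically. Having already established the $L^2$-bounds, Cauchy--Schwarz gives $\int_M|\nabla S^{\varphi}|e^{-f}\le 2\bigl(\int_M|\mbox{Ric}^{\varphi}|^2 e^{-f}\bigr)^{1/2}\bigl(\int_M|\nabla f|^2 e^{-f}\bigr)^{1/2}<\infty$, so the cutoff term $\int_M\rho_R e^{-f}\langle\nabla\rho_R,\nabla S^{\varphi}\rangle$, being bounded by $\tfrac{C}{R}$ times the tail $\int_{B_p(2R)\setminus B_p(R)}|\nabla S^{\varphi}|e^{-f}$ of a convergent integral, tends to $0$. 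Passing to the limit then yields the equality, and the finiteness $\lambda\int_M S^{\varphi}e^{-f}<\infty$ is exactly \eqref{finitezza inte di s phi e nabla f quadro} with $\gamma=1$.

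Finally, \eqref{grad di phi curv scalare sta in l 2} follows with no further integration by parts: from $|\nabla S^{\varphi}|^2\le 4|\mbox{Ric}^{\varphi}|^2|\nabla f|^2$ and $|\nabla f|^2\le 2\lambda f$ one obtains $|\nabla S^{\varphi}|^2\le 8\lambda f|\mbox{Ric}^{\varphi}|^2$, and since the elementary inequality $f\le 2e^{f/2}$ gives $fe^{-f}\le 2e^{-f/2}$, I conclude $\int_M|\nabla S^{\varphi}|^2 e^{-f}\le 16\lambda\int_M|\mbox{Ric}^{\varphi}|^2 e^{-f/2}<\infty$ by \eqref{ric phi in l 2 e anche tensione} applied with $\mu=\tfrac12$. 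The delicate point in the whole scheme is the simultaneous absorption of the two $|\mbox{Ric}^{\varphi}|^2$-contributions in the second step, which is what forces both the sharp use of Young's inequality and the prior knowledge that the weighted integrals of $|\nabla f|^2$ and $|\nabla f|^4$ are finite.
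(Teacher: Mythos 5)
Your proof is correct and, in substance, it is the paper's proof: a cutoff-weighted integration by parts, absorption of the $|\mbox{Ric}^{\varphi}|^2$-contributions via Young's inequality with the same constants, and the polynomial-growth integrability \eqref{finitezza inte di s phi e nabla f quadro} to control everything that survives. The only visible difference is the entry point: you integrate the traced identity \eqref{f laplacian phi scalar curvature} and integrate $\Delta_fS^{\varphi}$ by parts once, whereas the paper writes $|\mbox{Ric}^{\varphi}|^2=R^{\varphi}_{ij}(\lambda\delta_{ij}-f_{ij})$ from the first soliton equation, integrates the Hessian term by parts, and produces the $\alpha|\tau(\varphi)|^2$-term through the Schur identity \eqref{div of phi Ricci} and the second soliton equation. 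These two derivations merge immediately: since \eqref{form gradiente phi curv scalare in solitoni gradiente} gives $\tfrac{1}{2}S^{\varphi}_j=R^{\varphi}_{ij}f_i$, your cutoff term $\int_M\rho_R e^{-\mu f}\langle\nabla\rho_R,\nabla S^{\varphi}\rangle$ and cross term $-\tfrac{\mu-1}{2}\int_M\rho_R^2e^{-\mu f}\langle\nabla f,\nabla S^{\varphi}\rangle$ coincide exactly with the paper's $\int_MR^{\varphi}_{ij}f_i(\rho_R^2)_je^{-\mu f}$ and $(1-\mu)\int_MR^{\varphi}_{ij}f_if_j\rho_R^2e^{-\mu f}$, so from that point on the two arguments are estimate-for-estimate identical. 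Two of your choices are in fact slightly cleaner than the paper's: for the $\mu=1$ equality you establish $\nabla S^{\varphi}\in L^1(M,e^{-f})$ by Cauchy--Schwarz and kill the cutoff term by a tail argument, where the paper instead bounds $|\mbox{Ric}^{\varphi}|\,|\nabla f|$ by $|\mbox{Ric}^{\varphi}|^2+|\nabla f|^2$; and for \eqref{grad di phi curv scalare sta in l 2} your explicit inequality $fe^{-f}\le 2e^{-f/2}$, combined with \eqref{ric phi in l 2 e anche tensione} at $\mu=\tfrac{1}{2}$, makes fully rigorous the paper's looser assertion that $|\nabla f|^2e^{-f}\le e^{-\mu f}$ for some $0<\mu<1$ because $f$ has polynomial growth (both steps, yours and the paper's, tacitly use the normalization $S^{\varphi}+|\nabla f|^2=2\lambda f$ of the potential, which is harmless since the conclusions are invariant under adding a constant to $f$).
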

	\begin{proof}
		Using the first equation of \eqref{sol equat non compact conti con cutoff} and the divergence theorem, for a smooth cutoff $\rho$ we get
		\begin{equation*}
		\int_M|\mbox{Ric}^{\varphi}|^2\rho^2e^{-\mu f}=\int_MR^{\varphi}_{ij}(\lambda\delta_{ij}-f_{ij})\rho^2e^{-\mu f}=\lambda\int_MS^{\varphi}\rho^2e^{-f}+\int_M(R^{\varphi}_{ij}\rho^2e^{-\mu f})_jf_i.
		\end{equation*}
		Using also \eqref{div of phi Ricci}, the second equation of \eqref{sol equat non compact conti con cutoff} and \eqref{norma grad S phi in termini di ric phi quadro} the above gives
		\begin{equation}\label{prima stima per norma 2 di ric phi}
		\begin{aligned}
		\int_M|\mbox{Ric}^{\varphi}|^2\rho^2e^{-\mu f}+\alpha\int_M|\tau(\varphi)|^2\rho^2e^{-\mu f}=&\lambda\int_MS^{\varphi}\rho^2e^{-\mu f}+(1-\mu)\int_MR^{\varphi}_{ij}f_if_j\rho^2 e^{-\mu f}\\
		&+\int_MR^{\varphi}_{ij}f_i(\rho^2)_je^{-\mu f}.
		\end{aligned}
		\end{equation}
		
		Notice that
		\begin{equation}\label{norma di ric phi applicato a grad f}
		|\mbox{ric}^{\varphi}(\nabla f)|^2=R^{\varphi}_{ij}f_jR^{\varphi}_{ik}f_k=(\mbox{Ric}^{\varphi})^2(\nabla f,\nabla f),
		\end{equation}
		where we are denoting by $\mbox{ric}^{\varphi}$ the $(1,1)$-version of $\mbox{Ric}^{\varphi}$. Clearly $(\mbox{Ric}^{\varphi})^2\geq 0$ and thus the following inequality in the sense of quadratic forms holds
		\begin{equation*}
		(\mbox{Ric}^{\varphi})^2\leq \mbox{tr}[(\mbox{Ric}^{\varphi})^2]g=|\mbox{Ric}^{\varphi}|^2g,
		\end{equation*}
		hence
		\begin{equation}\label{stima ric phi quadro applicato a grad f}
		(\mbox{Ric}^{\varphi})^2(\nabla f,\nabla f)\leq |\mbox{Ric}^{\varphi}|^2|\nabla f|^2.
		\end{equation}
		Then \eqref{norma di ric phi applicato a grad f} yields the validity of
		\begin{equation}\label{norma di ric phi applicato a grad f stimata}
		|\mbox{ric}^{\varphi}(\nabla f)|\leq |\mbox{Ric}^{\varphi}||\nabla f|
		\end{equation}
		
		Now, from Cauchy-Schwarz inequality and \eqref{norma di ric phi applicato a grad f stimata} we deduce
		\begin{equation*}
		|R^{\varphi}_{ij}f_i(\rho^2)_j|\leq |\mbox{ric}^{\varphi}(\nabla f)||\nabla \rho^2|\leq |\mbox{Ric}^{\varphi}||\nabla f||\nabla \rho^2|,
		\end{equation*}
		so that, since $\nabla \rho=2\rho \nabla \rho$, using the following Cauchy inequality, valid for every $\varepsilon>0$ and for every $a,b\in\mathbb{R}$,
		\begin{equation}\label{cauhcy con epsilon}
		ab\leq \frac{\varepsilon}{2}a^2+\frac{1}{2\varepsilon}b^2,
		\end{equation}
		we get from the above
		\begin{align*}
		|R^{\varphi}_{ij}f_i(\rho^2)_j|\leq2\rho|\mbox{Ric}^{\varphi}||\nabla f||\nabla \rho|\leq 2\varepsilon|\mbox{Ric}^{\varphi}|^2\rho^2+\frac{1}{2\varepsilon}|\nabla f|^2|\nabla \rho|^2,
		\end{align*}
		that, for $\varepsilon=\frac{1}{8}$ reads
		\begin{equation}\label{stima che mi serve per far vedere ric phi in l 2}
		|R^{\varphi}_{ij}f_i(\rho^2)_j|\leq \frac{1}{4}|\mbox{Ric}^{\varphi}|^2\rho^2+4|\nabla f|^2|\nabla \rho|^2.
		\end{equation}
		
		Furthermore, using Cauchy-Schwarz inequality and \eqref{cauhcy con epsilon} we have
		\begin{equation*}
			|R^{\varphi}_{ij}f_if_j|\leq |\mbox{Ric}^{\varphi}||\nabla f|^2\leq \frac{\varepsilon}{2}|\mbox{Ric}^{\varphi}|^2+\frac{1}{2\varepsilon}|\nabla f|^4,
		\end{equation*}
		that for $\varepsilon=\frac{1}{2|1-\mu|}$ (when $\mu\neq 1$, because when $\mu=1$ we do not have to deal with this term) gives
		\begin{equation}\label{stima pezzo con mu per vedere ric phi in l 2}
			(1-\mu)\int_MR^{\varphi}_{ij}f_if_j\rho^2 e^{-\mu f}\leq \frac{1}{4}\int_M|\mbox{Ric}^{\varphi}|^2\rho^2 e^{-\mu f}+(1-\mu)^2\int_M|\nabla f|^4\rho^2 e^{-\mu f}
		\end{equation}
		Then, with the aid of \eqref{stima che mi serve per far vedere ric phi in l 2} and \eqref{stima pezzo con mu per vedere ric phi in l 2}, from \eqref{prima stima per norma 2 di ric phi} we get
		\begin{equation}\label{eq per mostrare che ric phi sta in l 2}
		\begin{aligned}
		\frac{1}{2}\int_M|\mbox{Ric}^{\varphi}|^2\rho^2e^{-\mu f}+\alpha\int_M|\tau(\varphi)|^2\rho^2e^{-\mu f}\leq& \lambda\int_MS^{\varphi}\rho^2e^{-\mu f}+(1-\mu)^2\int_M|\nabla f|^4\rho^2 e^{-\mu f}\\
		&+2\int_M|\nabla f|^2|\nabla \rho|^2e^{-\mu f}.
		\end{aligned}
		\end{equation}
		
		Recalling the validity of \eqref{finitezza inte di s phi e nabla f quadro}, from \eqref{eq per mostrare che ric phi sta in l 2} and choosing $\rho=\rho_R$ as defined in \hyperref[rmk cutoff]{Remark \ref*{rmk cutoff}}, we get
		\begin{align*}
		\frac{1}{2}\int_{B_p(R)}|\mbox{Ric}^{\varphi}|^2e^{-\mu f}+\alpha\int_{B_p(R)}|\tau(\varphi)|^2e^{-\mu f}\leq& \lambda\int_MS^{\varphi}e^{-\mu f}+(1-\mu)^2\int_M|\nabla f|^4e^{-\mu f}\\
		&+\frac{1}{R}\int_M|\nabla f|^2e^{-\mu f}<+\infty,
		\end{align*}
		Letting $R\to +\infty$ we get \eqref{ric phi in l 2 e anche tensione}, since $\alpha>0$.
		
		Once we know that $\mbox{Ric}^{\varphi}\in L^2(M,e^{-f})$, using \eqref{prima stima per norma 2 di ric phi} with $\rho=\rho_R$ and $\mu=1$ and passing to the limit for $R\to +\infty$ we obtain \eqref{eq che implica ric phi in l 2}. Indeed it is sufficient to show that
		\begin{equation}
		\lim_{R\to +\infty}\int_MR^{\varphi}_{ij}f_i(\rho^2_R)_je^{-f}=0,
		\end{equation}
		and this follows easily from the inequality
		\begin{equation*}
			|R^{\varphi}_{ij}f_i(\rho^2)_j|\leq 2\rho|\mbox{Ric}^{\varphi}||\nabla f||\nabla \rho|\leq \frac{2C}{R}|\mbox{Ric}^{\varphi}||\nabla f|\leq \frac{C}{R}(|\mbox{Ric}^{\varphi}|^2+|\nabla f|^2)
		\end{equation*}
		and \eqref{finitezza inte di s phi e nabla f quadro}.
		
		It remains to prove \eqref{grad di phi curv scalare sta in l 2}. Using \eqref{form gradiente phi curv scalare in solitoni gradiente} twice we have
		\begin{equation}\label{norma grad S phi in termini di ric phi quadro}
		|\nabla S^{\varphi}|^2=4(\mbox{Ric}^{\varphi})^2(\nabla f,\nabla f),
		\end{equation}
		that with the aid of \eqref{stima ric phi quadro applicato a grad f} gives
		\begin{equation*}
			|\nabla S^{\varphi}|^2e^{-f}\leq |\mbox{Ric}^{\varphi}|^2|\nabla f|^2e^{-f}.
		\end{equation*}
		Since $f$ has polynomial growth we have
		\begin{equation*}
			|\nabla f|^2e^{-f}\leq e^{-\mu f}
		\end{equation*}
		for some $0< \mu<1$. Then we deduce, using \eqref{norma grad S phi in termini di ric phi quadro} and the above,
		\begin{equation*}
			\int_M|\nabla S^{\varphi}|^2e^{-f}\leq 4\int_M|\mbox{Ric}^{\varphi}|^2e^{-\mu f},
		\end{equation*}
		that is finite in view of \eqref{ric phi in l 2 e anche tensione}.
	\end{proof}
	
	Now we are ready to prove the key result in order to extend the validity of \hyperref[thm phi cotton flat compact ricci solitons are rigid]{Theorem \ref*{thm phi cotton flat compact ricci solitons are rigid}} to complete non-compact gradient solitons.
	\begin{thm}\label{thm caso non compatto con phi cotton nullo}
		Let $(M,g)$ be a complete non-compact gradient harmonic-Ricci soliton of dimension $m\geq 2$, that is, \eqref{sol equat non compact conti con cutoff} holds for some $f\in\mathcal{C}^{\infty}(M)$, $\alpha,\lambda>0$, $\varphi:M\to N$ smooth, where $(N,\langle\,,\,\rangle_N)$ is a Riemannian manifold. If $C^{\varphi}=0$ then
		\begin{equation}\label{eq fond che lega grad di s phi con norma di der cov di phi ricci}
			\int_M|\nabla \mbox{Ric}^{\varphi}|^2e^{-f}=\frac{1}{2(m-1)}\int_M|\nabla S^{\varphi}|^2e^{-f}<+\infty.
		\end{equation}
		In particular, if $m\geq 3$, $\nabla \mbox{Ric}^{\varphi}=0$.
	\end{thm}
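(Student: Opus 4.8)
The plan is to reprove the integral formula \eqref{formula integrale nel caso generale} in the complete non-compact setting by inserting the cutoff $\rho_R$ of \hyperref[rmk cutoff]{Remark \ref*{rmk cutoff}} into every integration by parts and then letting $R\to+\infty$. The starting observation is that $C^{\varphi}=0$ makes $\varphi$ conservative, and since $\tau(\varphi)=d\varphi(\nabla f)$ the map $\varphi$ is in fact harmonic by \hyperref[rmk soliton con phi conser in realta e arm]{Remark \ref*{rmk soliton con phi conser in realta e arm}}; thus $\varphi^a_{kk}\equiv 0$ and hence also $\varphi^a_{kki}\equiv 0$ on all of $M$. Consequently the extra term $2\alpha\int_M\varphi^a_{kki}\varphi^a_jR^{\varphi}_{ij}e^{-f}$ appearing in \eqref{formula integrale nel caso generale} vanishes pointwise, and together with \eqref{norma F phi quadro con C phi zero} the target identity \eqref{eq fond che lega grad di s phi con norma di der cov di phi ricci} is exactly the localized formula in the limit, provided the limiting procedure can be justified.

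Concretely, I would start from the Weitzenb\"ock identity $\tfrac12\Delta_f|\mbox{Ric}^{\varphi}|^2=|\nabla\mbox{Ric}^{\varphi}|^2+\langle\Delta_f\mbox{Ric}^{\varphi},\mbox{Ric}^{\varphi}\rangle$, multiply by $\rho_R^2e^{-f}$ and integrate. Integrating the left side by parts produces the single boundary term $-\int_M\rho_R\langle\nabla\rho_R,\nabla|\mbox{Ric}^{\varphi}|^2\rangle e^{-f}$, while $\langle\Delta_f\mbox{Ric}^{\varphi},\mbox{Ric}^{\varphi}\rangle$ is rewritten through \eqref{f laplacian phi ricci per solitone grad scritt con hess f} exactly as in the compact case. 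The delicate point is the term containing the full Riemann tensor, $\int_M\rho_R^2R_{tikj}f_{kt}R^{\varphi}_{ij}e^{-f}$: since $\mbox{Riem}$ is not controlled on its own, I would integrate by parts in the index $t$ as in \eqref{thm caso compatto form integrale primo pezzo}, so that the curvature always appears either contracted with $\nabla f$ --- where $R_{tikj}f_t=F^{\varphi}_{ijk}$ by \eqref{form commutazione der covariante per ricci phi in solitoni gradiente} and $f_tR_{tikj}R^{\varphi}_{ij,k}=\tfrac12|F^{\varphi}|^2$ by \eqref{norma F quadro caso compatto} --- or differentiated and eliminated by \eqref{div riem con esponeziale caso compatto}. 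All remaining $\nabla d\varphi$-contributions recombine pointwise, using $\varphi^a_{kki}=\varphi^a_{ki}f_k+\varphi^a_kf_{ki}$, into the single term $2\alpha\rho_R^2\varphi^a_{kki}\varphi^a_jR^{\varphi}_{ij}e^{-f}$, which vanishes by harmonicity. What is left is $\int_M\rho_R^2|\nabla\mbox{Ric}^{\varphi}|^2e^{-f}=\int_M\rho_R^2|F^{\varphi}|^2e^{-f}+\mathcal B_R$, with $\mathcal B_R$ a finite sum of boundary terms, each carrying one factor $\nabla\rho_R$.

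The heart of the argument, and the \emph{main obstacle}, is the analysis of $\mathcal B_R$. The boundary term from the Weitzenb\"ock part is estimated, via Cauchy--Schwarz and \eqref{cauhcy con epsilon}, by $\varepsilon\int_M\rho_R^2|\nabla\mbox{Ric}^{\varphi}|^2e^{-f}+\varepsilon^{-1}\int_M|\nabla\rho_R|^2|\mbox{Ric}^{\varphi}|^2e^{-f}$; choosing $\varepsilon$ small lets me absorb the first piece into the left side, while the second is $O(R^{-2})\int_M|\mbox{Ric}^{\varphi}|^2e^{-f}$. The boundary terms from the Riemann integration by parts involve $R_{tikj}f_k=F^{\varphi}_{jit}$ (again by \eqref{form commutazione der covariante per ricci phi in solitoni gradiente} and the symmetries of $\mbox{Riem}$), hence are bounded by $\tfrac{C}{R}\int_M(|F^{\varphi}|^2+|\mbox{Ric}^{\varphi}|^2)e^{-f}$. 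Since $\int_M|\mbox{Ric}^{\varphi}|^2e^{-f}<+\infty$ by \eqref{ric phi in l 2 e anche tensione}, $\int_M|F^{\varphi}|^2e^{-f}=\tfrac{1}{2(m-1)}\int_M|\nabla S^{\varphi}|^2e^{-f}<+\infty$ by \eqref{norma F phi quadro con C phi zero} and \eqref{grad di phi curv scalare sta in l 2}, and $\int_M|\nabla f|^2e^{-\mu f}<+\infty$ by \eqref{finitezza inte di s phi e nabla f quadro}, this first yields a uniform bound on $\int_{B_p(R)}|\nabla\mbox{Ric}^{\varphi}|^2e^{-f}$, proving $\nabla\mbox{Ric}^{\varphi}\in L^2(M,e^{-f})$, and then shows $\mathcal B_R\to0$ as $R\to+\infty$. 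Passing to the limit by monotone convergence on both $\int\rho_R^2|\nabla\mbox{Ric}^{\varphi}|^2e^{-f}$ and $\int\rho_R^2|F^{\varphi}|^2e^{-f}$ yields \eqref{eq fond che lega grad di s phi con norma di der cov di phi ricci}.

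Finally, for $m\geq 3$ the conclusion $\nabla\mbox{Ric}^{\varphi}=0$ is immediate: since $S^{\varphi}_k=R^{\varphi}_{ii,k}$ is the trace of $\nabla\mbox{Ric}^{\varphi}$, Cauchy--Schwarz gives the pointwise bound $|\nabla\mbox{Ric}^{\varphi}|^2\geq\tfrac1m|\nabla S^{\varphi}|^2$. Combining this with \eqref{eq fond che lega grad di s phi con norma di der cov di phi ricci} yields $\bigl(\tfrac{1}{2(m-1)}-\tfrac1m\bigr)\int_M|\nabla S^{\varphi}|^2e^{-f}\geq 0$; since the coefficient $\tfrac{2-m}{2m(m-1)}$ is strictly negative for $m\geq 3$, the finite nonnegative integral $\int_M|\nabla S^{\varphi}|^2e^{-f}$ must vanish. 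Hence $\nabla S^{\varphi}=0$, and then \eqref{eq fond che lega grad di s phi con norma di der cov di phi ricci} forces $\nabla\mbox{Ric}^{\varphi}=0$.
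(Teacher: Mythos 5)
Your proposal is correct and follows essentially the same route as the paper: the same cutoff-localized Weitzenb\"ock/divergence identity, the same key formulas \eqref{f laplacian phi ricci per solitone grad scritt con hess f}, \eqref{norma F quadro caso compatto}, \eqref{div riem con esponeziale caso compatto} and \eqref{norma F phi quadro con C phi zero}, the same integrability inputs \eqref{ric phi in l 2 e anche tensione} and \eqref{grad di phi curv scalare sta in l 2}, and the same two-step boundary analysis (first an $\varepsilon$-absorption to get $\nabla\mbox{Ric}^{\varphi}\in L^2(M,e^{-f})$, then $O(1/R)$ decay of the cutoff terms), ending with the identical trace Cauchy--Schwarz argument. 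The minor differences (invoking harmonicity of $\varphi$ to kill the $\varphi^a_{kki}$-term rather than tracking the cancellation as in the paper, and the index bookkeeping $F^{\varphi}_{jit}$ versus $F^{\varphi}_{itj}$, which is immaterial for the estimates) are purely presentational.
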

	\begin{proof}
		Let $\rho$ be a smooth function with compact support, then, in a local orthonormal coframe
		\begin{equation*}
		|\nabla \mbox{Ric}^{\varphi}|^2\rho^2e^{-f}=(R^{\varphi}_{ij}R^{\varphi}_{ij,k}\rho^2e^{-f})_k-R^{\varphi}_{ij}(R^{\varphi}_{ij,k}\rho^2e^{-f})_k.
		\end{equation*}
		Integrating the above and using the divergence theorem we get
		\begin{equation}\label{integrale div ric phi caso non compatto}
		\int_M|\nabla \mbox{Ric}^{\varphi}|^2\rho^2e^{-f}=-\int_MR^{\varphi}_{ij}(R^{\varphi}_{ij,k}\rho^2e^{-f})_k=-\int_MR^{\varphi}_{ij}\Delta_fR^{\varphi}_{ij}\rho^2e^{-f}-\int_MR^{\varphi}_{ij}R^{\varphi}_{ij,k}(\rho^2)_ke^{-f}.
		\end{equation}
		
		Contracting \eqref{f laplacian phi ricci per solitone grad scritt con hess f} against $\mbox{Ric}^{\varphi}$ we get
		\begin{equation*}
		R^{\varphi}_{ij}\Delta_f R^{\varphi}_{ij}=2R_{tikj}f_{kt}R^{\varphi}_{ij}-2\alpha\varphi^a_i\varphi^a_kf_{kj}R^{\varphi}_{ij}-2\alpha \varphi^a_{ij}R^{\varphi}_{ij}\varphi^a_{kk}.
		\end{equation*}
		Since $C^{\varphi}=0$ we know that $\varphi$ is harmonic, hence the above becomes
		\begin{equation*}
		R^{\varphi}_{ij}\Delta_f R^{\varphi}_{ij}=2R_{tikj}f_{kt}R^{\varphi}_{ij}-2\alpha\varphi^a_i\varphi^a_kf_{kj}R^{\varphi}_{ij},
		\end{equation*}
		and thus \eqref{integrale div ric phi caso non compatto} can be rewritten as
		\begin{equation}\label{caso non comp integrale di ric phi contro f laplaciano di ric phi}
		\int_M|\nabla \mbox{Ric}^{\varphi}|^2\rho^2e^{-f}=-2\int_MR_{tikj}f_{kt}R^{\varphi}_{ij}\rho^2 e^{-f}+2\alpha\int_M\varphi^a_i\varphi^a_kf_{kj}R^{\varphi}_{ij}\rho^2e^{-f}-\int_MR^{\varphi}_{ij}R^{\varphi}_{ij,k}(\rho^2)_ke^{-f}.
		\end{equation}
		Integrating by parts
		\begin{align*}
		-\int_MR_{tikj}f_{kt}R^{\varphi}_{ij}\rho^2e^{-f}=&\int_M(R_{tikj}R^{\varphi}_{ij}\rho^2e^{-f})_tf_k\\
		=&\int_M(R_{tikj}e^{-f})_tR^{\varphi}_{ij}f_k\rho^2+\int_Mf_kR_{tikj}R^{\varphi}_{ij,t}\rho^2e^{-f}+\int_MR_{tikj}f_kR^{\varphi}_{ij}(\rho^2)_te^{-f}.
		\end{align*}
		By plugging \eqref{norma F quadro caso compatto} and \eqref{div riem con esponeziale caso compatto} into the above we get
		\begin{align*}
		-\int_MR_{tikj}f_{kt}R^{\varphi}_{ij}\rho^2e^{-f}=&\alpha\int_M(\varphi^a_{ik}\varphi^a_j-\varphi^a_{ij}\varphi^a_k)R^{\varphi}_{ij}f_k\rho^2e^{-f}+\frac{1}{2}\int_M|F^{\varphi}|^2\rho^2e^{-f}+\int_MR_{tikj}f_kR^{\varphi}_{ij}(\rho^2)_te^{-f},
		\end{align*}
		that gives, using $\varphi^a_kf_k=0$ and $\varphi^a_{ik}f_k=(\varphi^a_kf_k)_i-\varphi^a_kf_{ki}=-\varphi^a_kf_{ki}$,
		\begin{align*}
		-\int_MR_{tikj}f_{kt}R^{\varphi}_{ij}\rho^2e^{-f}=\frac{1}{2}\int_M|F^{\varphi}|^2\rho^2e^{-f}-\alpha\int_M\varphi^a_k\varphi^a_jR^{\varphi}_{ij}f_{ki}\rho^2e^{-f}+\int_MR_{tikj}f_kR^{\varphi}_{ij}(\rho^2)_te^{-f}.
		\end{align*}
		
		Inserting the above into \eqref{caso non comp integrale di ric phi contro f laplaciano di ric phi} we conclude
		\begin{equation}\label{stima utile per norma di der cov di phi ricci}
		\int_M|\nabla \mbox{Ric}^{\varphi}|^2\rho^2e^{-f}=\int_M|F^{\varphi}|^2\rho^2e^{-f}+2\int_MR_{tikj}f_kR^{\varphi}_{ij}(\rho^2)_te^{-f}-\int_MR^{\varphi}_{ij}R^{\varphi}_{ij,k}(\rho^2)_ke^{-f}.
		\end{equation}
		
		Since $C^{\varphi}=0$ we know that \eqref{norma F phi quadro con C phi zero} holds, hence
		\begin{equation*}
		|F^{\varphi}|^2=\frac{1}{2(m-1)}|\nabla S^{\varphi}|^2.
		\end{equation*}
		In view of \eqref{grad di phi curv scalare sta in l 2} we know that $F^{\varphi}\in L^2(M,e^{-f})$.
		
		To prove \eqref{eq fond che lega grad di s phi con norma di der cov di phi ricci} it is sufficient to show the validity of
		\begin{equation}\label{primo limite fa zero}
		\lim_{R\to +\infty}\int_MR_{tikj}f_kR^{\varphi}_{ij}(\rho_R^2)_te^{-f}= 0
		\end{equation}
		and
		\begin{equation}\label{secondo limite fa zero}
		\lim_{R\to +\infty}\int_MR^{\varphi}_{ij}R^{\varphi}_{ij,k}(\rho_R^2)_ke^{-f}=0,
		\end{equation}
		where $\rho_R$ is the cutoff function defined in \hyperref[rmk cutoff]{Remark \ref*{rmk cutoff}}. Indeed, assuming the validity of \eqref{primo limite fa zero} and \eqref{secondo limite fa zero}, passing to the limit for $R\to +\infty$ in \eqref{stima utile per norma di der cov di phi ricci} with $\rho=\rho_R$ we obtain \eqref{eq fond che lega grad di s phi con norma di der cov di phi ricci}.
		
		We start proving \eqref{primo limite fa zero}. Using \eqref{form commutazione der covariante per ricci phi in solitoni gradiente} we have
		\begin{equation*}
		R_{tikj}f_kR^{\varphi}_{ij}=R_{kitj}f_kR^{\varphi}_{ij}=F^{\varphi}_{itj}R^{\varphi}_{ij}
		\end{equation*}
		and thus, using Cauchy-Schwarz inequality,
		\begin{equation*}
		|R_{tikj}f_kR^{\varphi}_{ij}(\rho_R^2)_t|=|F^{\varphi}_{itj}R^{\varphi}_{ij}(\rho^2_R)_t|\leq |F^{\varphi}||\mbox{Ric}^{\varphi}||\nabla\rho_R^2|.
		\end{equation*}
		Then, using the property of the cutoff $\rho_R$, on $B_p(2R)\setminus B_p(R)$
		\begin{equation*}
		|R_{tikj}f_kR^{\varphi}_{ij}(\rho_R^2)_t|\leq 2\rho_R|F^{\varphi}||\mbox{Ric}^{\varphi}||\nabla\rho_R|\leq \frac{2C}{R}\left(\frac{1}{2}|F^{\varphi}|^2+\frac{1}{2}|\mbox{Ric}^{\varphi}|^2\right)=\frac{C}{R}\left(|F^{\varphi}|^2+|\mbox{Ric}^{\varphi}|^2\right),
		\end{equation*}
		so that, using that $\mbox{Ric}^{\varphi},F^{\varphi}\in L^2(M,e^{-f})$, we guarantee the existence of a positive constant $C_1$ independent from $R$ such that
		\begin{equation*}
		\left|\int_MR_{tikj}f_kR^{\varphi}_{ij}(\rho_R^2)_te^{-f}\right|\leq \int_M|R_{tikj}f_kR^{\varphi}_{ij}(\rho_R^2)_t|e^{-f}\leq \frac{C}{R}\left(\int_M|F^{\varphi}|^2e^{-f}+\int_M|\mbox{Ric}^{\varphi}|^2e^{-f}\right)=\frac{C_1}{R},
		\end{equation*}
		and thus \eqref{primo limite fa zero} follows.
		
		Now is the turn of \eqref{secondo limite fa zero}. To prove if first of all we need to show that
		\begin{equation}\label{finitezza norma 2 di der cov di phi ricci}
		\int_M|\nabla \mbox{Ric}^{\varphi}|^2e^{-f}<+\infty.
		\end{equation}
		Using Cauchy-Schwarz inequality and the Cauchy inequality \eqref{cauhcy con epsilon} we have, for every $\varepsilon>0$,
		\begin{equation*}
		|R^{\varphi}_{ij}R^{\varphi}_{ij,k}(\rho^2_R)_k|\leq |\nabla \mbox{Ric}^{\varphi}||\mbox{Ric}^{\varphi}||\nabla \rho^2_R|=2\rho_R|\nabla \mbox{Ric}^{\varphi}||\mbox{Ric}^{\varphi}|\leq \left(2\varepsilon\rho^2_R|\nabla \mbox{Ric}^{\varphi}|^2+\frac{1}{2\varepsilon}|\mbox{Ric}^{\varphi}|^2\right),
		\end{equation*}
		so that, for $\varepsilon=\frac{1}{4}$ from \eqref{stima utile per norma di der cov di phi ricci} with $\rho=\rho_R$ we deduce
		\begin{align*}
		\int_M|\nabla \mbox{Ric}^{\varphi}|^2\rho_R^2e^{-f}=&\int_M|F^{\varphi}|^2\rho_R^2e^{-f}+2\int_MR_{tikj}f_kR^{\varphi}_{ij}(\rho^2_R)_te^{-f}\\
		&+\frac{1}{2}\int_M\rho^2_R|\nabla \mbox{Ric}^{\varphi}|^2e^{-f}+2\int_M|\mbox{Ric}^{\varphi}|^2\rho_R^2e^{-f},
		\end{align*}
		and using that $\mbox{Ric}^{\varphi},F^{\varphi}\in L^2(M,e^{-f})$ the above gives
		\begin{align*}
		\frac{1}{2}\int_M|\nabla \mbox{Ric}^{\varphi}|^2\rho_R^2e^{-f}=2\int_MR_{tikj}f_kR^{\varphi}_{ij}(\rho_R^2)_te^{-f}+C_1.
		\end{align*}
		With the aid of \eqref{primo limite fa zero} we conclude that \eqref{finitezza norma 2 di der cov di phi ricci} holds.
		
		Now that we obtained the validity of \eqref{finitezza norma 2 di der cov di phi ricci} we are finally ready to prove \eqref{secondo limite fa zero}. Using Cauchy-Schwarz inequality and the properties of the cutoff $\rho_R$,
		\begin{equation*}
		|R^{\varphi}_{ij}R^{\varphi}_{ij,k}(\rho^2_R)_k|\leq |\nabla \mbox{Ric}^{\varphi}||\mbox{Ric}^{\varphi}||\nabla \rho^2_R|\leq\frac{C}{R}(|\nabla \mbox{Ric}^{\varphi}|^2+|\mbox{Ric}^{\varphi}|^2),
		\end{equation*}
		hence, for some positive constant independent from $R$,
		\begin{equation*}
		\int_MR^{\varphi}_{ij}R^{\varphi}_{ij,k}(\rho^2_R)_ke^{-f}\leq \int_M|R^{\varphi}_{ij}R^{\varphi}_{ij,k}(\rho^2_R)_k|e^{-f}\leq \frac{C}{R}\left(\int_M|\nabla \mbox{Ric}^{\varphi}|^2e^{-f}+\int_M|\mbox{Ric}^{\varphi}|^2e^{-f}\right)=\frac{C_1}{R}.
		\end{equation*}
		Now \eqref{secondo limite fa zero} follows and thus \eqref{eq fond che lega grad di s phi con norma di der cov di phi ricci} holds.
		
		Using the inequality \eqref{dis per nabla ric phi}, the validity of \eqref{eq fond che lega grad di s phi con norma di der cov di phi ricci} immediately gives that $S^{\varphi}$ is constant on $M$ and thus $\mbox{Ric}^{\varphi}$ is parallel when $m=3$, as seen in the proof of \hyperref[thm phi cotton flat compact ricci solitons are rigid]{Theorem \ref*{thm phi cotton flat compact ricci solitons are rigid}}.
	\end{proof}
	
	The above Theorem, combined with \hyperref[thm rigidity]{Theorem \ref*{thm rigidity}} and \hyperref[rmk thm rigidity per shrinking]{Remark \ref*{rmk thm rigidity per shrinking}} gives the following
	\begin{cor}\label{cor se c phi zero allora rigidita}
		Let $(M,g)$ be a complete non-compact harmonic-Ricci soliton of dimension $m\geq 3$, that is, \eqref{sol equat non compact conti con cutoff} holds for some $f\in\mathcal{C}^{\infty}(M)$, $\alpha,\lambda>0$, $\varphi:M\to N$ smooth, where $(N,\langle\,,\,\rangle_N)$ is a Riemannian manifold. If $C^{\varphi}=0$ then $(M,g)$ is isometric to a finite quotient of $L\times \mathbb{R}^k$ for some $1\leq k\leq m$, where $(L,g_L)$ is a compact harmonic-Einstein manifold (with respect to $\alpha$, $\varphi_L:M\to N$ and $\lambda$) and, via the isometry, $\varphi=\varphi_L\circ \pi_L$ and $f=f_{\mathbb{R}^k}\circ \pi_{\mathbb{R}^k}$, where $f_{\mathbb{R}^k}=\frac{\lambda}{2}|x|^2+\langle b,x\rangle+c$ for some $c\in\mathbb{R}$ and $b\in\mathbb{R}^k$ and $\pi_L:L\times \mathbb{R}^k\to L$ and $\pi_{\mathbb{R}^k}:L\times \mathbb{R}^k\to \mathbb{R}^k$ are the canonical projections.
	\end{cor}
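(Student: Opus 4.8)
The plan is to concatenate the three results just established, since all the analytic work has already been carried out. First I would apply \hyperref[thm caso non compatto con phi cotton nullo]{Theorem \ref*{thm caso non compatto con phi cotton nullo}}: under the standing hypotheses $C^{\varphi}=0$ and $m\geq 3$, its conclusion gives directly $\nabla\mbox{Ric}^{\varphi}=0$, that is, the $\varphi$-Ricci tensor is parallel. No further estimates are needed at this point, since the weighted integral identity together with the Cauchy--Schwarz inequality $|\nabla\mbox{Ric}^{\varphi}|^2\geq \frac{1}{m}|\nabla S^{\varphi}|^2$ (which force the constancy of $S^{\varphi}$, and hence the vanishing of $\nabla\mbox{Ric}^{\varphi}$, exactly when $m\geq 3$) are precisely the content of that Theorem.

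With the parallel $\varphi$-Ricci tensor in hand, and since $\lambda>0$ (so in particular $\lambda\neq 0$, which places us in the non-steady regime), I would invoke \hyperref[thm rigidity]{Theorem \ref*{thm rigidity}}, whose hypotheses are now met verbatim. This yields that the soliton is rigid in the sense of \hyperref[def rigidita]{Definition \ref*{def rigidita}}.

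Finally, to pass from the abstract notion of rigidity to the explicit structural description in the statement, I would appeal to \hyperref[rmk thm rigidity per shrinking]{Remark \ref*{rmk thm rigidity per shrinking}}, which refines the de Rham splitting produced inside the proof of \hyperref[thm rigidity]{Theorem \ref*{thm rigidity}} in the shrinking case. There, the lower bound ${}^L\mbox{Ric}\geq \mbox{Ric}^{\varphi}=\lambda g_L$ on the harmonic-Einstein factor together with Myers's theorem shows that $L$ is compact, while the weighted Myers theorem forces the fundamental group of $M$ to be finite. Consequently $(M,g)$ is isometric to a finite quotient of $L\times\mathbb{R}^k$ with $f=f_{\mathbb{R}^k}\circ\pi_{\mathbb{R}^k}$ and $\varphi=\varphi_L\circ\pi_L$ as claimed, and since $M$ is non-compact the Euclidean factor must be non-trivial, i.e.\ $k\geq 1$.

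There is no genuine obstacle at this final stage: the corollary is merely the composition of these three inputs. All the difficulty has been front-loaded into \hyperref[thm caso non compatto con phi cotton nullo]{Theorem \ref*{thm caso non compatto con phi cotton nullo}} (deriving the parallelism of $\mbox{Ric}^{\varphi}$ from $\varphi$-Cotton flatness through the weighted integral identity and the cutoff argument) and into the splitting analysis of \hyperref[thm rigidity]{Theorem \ref*{thm rigidity}}; the hypothesis $\lambda>0$ serves only to guarantee that we are in the shrinking case handled by the Remark, thereby producing the compact harmonic-Einstein factor and the finite quotient.
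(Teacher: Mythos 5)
Your proposal is correct and coincides with the paper's own proof: the corollary is obtained exactly by chaining \hyperref[thm caso non compatto con phi cotton nullo]{Theorem \ref*{thm caso non compatto con phi cotton nullo}} (to get $\nabla\mbox{Ric}^{\varphi}=0$ from $C^{\varphi}=0$), \hyperref[thm rigidity]{Theorem \ref*{thm rigidity}} (applicable since $\lambda>0$ puts us in the non-steady case), and \hyperref[rmk thm rigidity per shrinking]{Remark \ref*{rmk thm rigidity per shrinking}} (Myers plus weighted Myers giving the compact factor $L$, the finite quotient, and $k\geq 1$ from non-compactness). No gaps; this is precisely the composition the paper intends.
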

	
	\section{Rigidity with assumptions on the $\varphi$-Bach tensor}\label{section bach flat}
	
	In this Section our aim is to extend the results of \cite{CC} to the class of harmonic-Ricci solitons, where it is natural to replace the Bach tensor with the $\varphi$-Bach tensor. We begin by showing in the next Theorem that we can reduce ourselves to the classification of the previous Sections.
	\begin{thm}\label{thm che bach flat implica phi cotton flat}
		Let $(M,g)$ be a complete gradient shrinking harmonic-Ricci soliton of dimension $m\geq 3$. If the totally traceless part $\mathring{B}^{\varphi}$ of the $\varphi$-Bach tensor vanishes on $M$, then $(M,g)$ is $\varphi$-Cotton flat.
	\end{thm}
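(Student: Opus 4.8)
The plan is to adapt to the $\varphi$-setting the method of H.~D.~Cao and Q.~Chen \cite{CC}, the goal being the weighted integral vanishing $\int_M|C^{\varphi}|^2e^{-f}=0$, which immediately yields $C^{\varphi}=0$. When $M$ is compact all the integrals below are automatically finite and no cutoff is needed; when $M$ is non-compact I would run the computation against the cutoff $\rho=\rho_R$ of \hyperref[rmk cutoff]{Remark \ref*{rmk cutoff}} and let $R\to+\infty$ at the end, the shrinking hypothesis entering only through the integrability estimates of the preliminary Lemma. First I would record a pointwise \emph{$\varphi$-Cotton--Weyl identity}. Combining $F^{\varphi}_{ijk}=R_{tikj}f_t$ from \eqref{form commutazione der covariante per ricci phi in solitoni gradiente}, the relation $C^{\varphi}_{ijk}=F^{\varphi}_{ijk}-\frac{1}{2(m-1)}(S^{\varphi}_k\delta_{ij}-S^{\varphi}_j\delta_{ik})$ established in the proof of \eqref{norma di F quadro relazionata a norma di C phi}, and the Weyl decomposition \eqref{comp phi weyl} of $R_{tikj}$, together with \eqref{form gradiente phi curv scalare in solitoni gradiente} to evaluate $A^{\varphi}_{tk}f_t=\tfrac12 S^{\varphi}_k-\frac{S^{\varphi}}{2(m-1)}f_k$, one arrives at
\begin{equation*}
C^{\varphi}_{ijk}=W^{\varphi}_{tikj}f_t+\mathcal{R}^{\varphi}_{ijk},
\end{equation*}
where $\mathcal{R}^{\varphi}_{ijk}$ is an explicit tensor built from $\mbox{Ric}^{\varphi}$, $S^{\varphi}$, $\nabla S^{\varphi}$ and $\nabla f$ (the $\varphi$-analogue of the auxiliary three-tensor of \cite{CC}). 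This identity lets me trade the radial part $f_kC^{\varphi}_{ijk}$ of the Cotton tensor for a contraction of the $\varphi$-Weyl tensor.

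Next I would compute $\int_M|C^{\varphi}|^2\rho^2e^{-f}$. Using the skew-symmetry of $C^{\varphi}$ in its last two indices one has $|C^{\varphi}|^2=2C^{\varphi}_{ijk}A^{\varphi}_{ij,k}$, and integrating by parts in $k$ against $e^{-f}$ moves the derivative onto $C^{\varphi}$ and produces the $f$-divergence $C^{\varphi}_{ijk,k}-f_kC^{\varphi}_{ijk}$, up to a cutoff remainder supported on $B_p(2R)\setminus B_p(R)$. I would then substitute the definition \eqref{def phi bach} of $B^{\varphi}$ for $C^{\varphi}_{ijk,k}$ and the $\varphi$-Cotton--Weyl identity for $f_kC^{\varphi}_{ijk}$. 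Under the hypothesis $\mathring B^{\varphi}=0$ one writes $B^{\varphi}=\frac{\mbox{tr}(B^{\varphi})}{m}g$, so that its contraction with $A^{\varphi}$ collapses to a trace governed by \eqref{traccia phi bach} and by the soliton relation $\tau(\varphi)=d\varphi(\nabla f)$. The decisive point is the closure: after inserting the divergence formula \eqref{div di phi Weyl} for $W^{\varphi}$ and using the soliton equations \eqref{gradient harm ricci soliton in a local orth coframe} (in particular $\varphi^a_{kk}=\varphi^a_kf_k$ and its covariant derivative), a second integration by parts recombines the curvature term $R^{\varphi}_{tk}W^{\varphi}_{tikj}A^{\varphi}_{ij}$, the $\varphi$-Weyl contraction $W^{\varphi}_{tikj}f_tf_kA^{\varphi}_{ij}$ and the map-derivative terms of \eqref{def phi bach} back into a multiple of $\int_M|C^{\varphi}|^2e^{-f}$. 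This self-improving identity, valid for $m\geq 3$, forces the right-hand side to consist only of the (controlled) trace-Bach term and the cutoff terms.

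Finally I would let $R\to+\infty$. The cutoff remainders vanish because, by the preliminary Lemma, $\mbox{Ric}^{\varphi},\nabla S^{\varphi}\in L^2(M,e^{-f})$ (see \eqref{ric phi in l 2 e anche tensione} and \eqref{grad di phi curv scalare sta in l 2}), the volume of $M$ is at most Euclidean, and $f$, $S^{\varphi}$ grow polynomially by \eqref{stima potenziale per solitone ricci harm}--\eqref{stima phi curv scalare come monteanu}; combined with \eqref{norma di F quadro relazionata a norma di C phi} this first yields $C^{\varphi},F^{\varphi}\in L^2(M,e^{-f})$ and then, exactly as in the proof of \hyperref[thm caso non compatto con phi cotton nullo]{Theorem \ref*{thm caso non compatto con phi cotton nullo}}, $\nabla\mbox{Ric}^{\varphi}\in L^2(M,e^{-f})$, which legitimizes each integration by parts and kills every boundary term. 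What remains is $\int_M|C^{\varphi}|^2e^{-f}=0$, hence $C^{\varphi}=0$.

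I expect the main obstacle to be the pointwise algebra behind the closure: unlike the classical Ricci-soliton case, $\varphi$ is not known to be harmonic a priori, so the $\varphi$-curvatures carry the extra first-order terms recorded in \eqref{div di phi Weyl}, \eqref{traccia phi cotton} and \eqref{traccia phi weyl}, and one must verify that these cancel precisely against the map-derivative contributions $\alpha(\varphi^a_{ij}\varphi^a_{kk}-\varphi^a_{kkj}\varphi^a_i)$ of \eqref{def phi bach} and against the trace term $\tfrac{\mbox{tr}(B^{\varphi})}{m}\mbox{tr}(A^{\varphi})$ arising from $\mathring B^{\varphi}=0$. A secondary, purely analytic, difficulty is the justification that all cutoff integrals vanish in the limit, which rests entirely on the weighted $L^2$-estimates above.
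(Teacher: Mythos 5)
Your opening identity is sound: your $\mathcal{R}^{\varphi}$ is exactly the tensor $D^{\varphi}$ of \eqref{definition of D phi}, and your ``$\varphi$-Cotton--Weyl identity'' is the paper's first integrability condition \eqref{first integrability condition with general mu}. But the step you call the \emph{closure} --- that a second integration by parts recombines the Weyl--Ricci term, the radial Weyl term and the map-derivative terms into a multiple of $\int_M|C^{\varphi}|^2e^{-f}$ --- is asserted, not proved, and it is precisely where the argument breaks. Contracting against $A^{\varphi}_{ij}$ (as you do) rather than against $f_if_j$ (as the paper, following \cite{CC}, does) loses the two cancellations that make the known proof work: with $f_if_j$ the Cotton contribution $C^{\varphi}_{jik}f_kf_if_j$ vanishes identically by skew-symmetry, and $D^{\varphi}_{ijk}f_if_j$ has a clean divergence structure. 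In your scheme, instead, the $f$-divergence term produces
\begin{equation*}
C^{\varphi}_{ijk}f_kA^{\varphi}_{ij}=\frac{m-2}{2}|D^{\varphi}|^2-f_tf_kW^{\varphi}_{tijk}A^{\varphi}_{ij}
\end{equation*}
(use \eqref{norm of D phi in terms of phi Ricci} and that $D^{\varphi}$ is trace-free in its first two indices), so $\int_M|D^{\varphi}|^2\rho^2e^{-f}$ enters your right-hand side with a \emph{positive} sign: it is an uncontrolled, non-negative source term, and controlling it is not a technicality, since proving $D^{\varphi}=0$ is the entire content of the Cao--Chen mechanism. The paper's actual route is: the second integrability condition \eqref{second integrability condition with general mu}; the observation that $\mathring{B}^{\varphi}=0$ forces $B^{\varphi}=\frac{\alpha(m-4)}{m(m-2)^2}|\tau(\varphi)|^2g$, which is the lower bound \eqref{nuova assunzione su bach} with $\varepsilon=\frac{2(m-2)}{m}$ (this is also where the delicate $\tau(\varphi)$-sign bookkeeping for $m\neq 4$, which your sketch leaves open, gets resolved); the resulting pointwise inequality $\frac{m-2}{2}|D^{\varphi}|^2+\alpha\varepsilon|\tau(\varphi)|^2|\nabla f|^2\leq\mbox{div}(Y)$ with $Y^k=D^{\varphi}_{ijk}f_if_j$; and integration over sublevel sets of the proper potential (no weight, no cutoff), which yields only $D^{\varphi}=0$ and $\tau(\varphi)=0$.

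Two further gaps. First, even after $D^{\varphi}=0$ and $\tau(\varphi)=0$ one has merely $C^{\varphi}_{ijk}=-f_tW^{\varphi}_{tijk}$, and no integral identity upgrades this to $C^{\varphi}=0$: the paper invokes the level-set analysis of Proposition 6.3.29 of \cite{A} to get $C^{\varphi}=0$ on $\{\nabla f\neq 0\}$, and then real analyticity of the potential in harmonic coordinates (Remarks 6.3.30--6.3.31 of \cite{A}) to propagate the vanishing to all of $M$. This geometric step is entirely absent from your proposal, and the fact that the known proofs all require it is strong evidence that the self-improving identity you hope for cannot exist. Second, your justification of the cutoff limits is circular: the preliminary Lemma gives only $\mbox{Ric}^{\varphi},\tau(\varphi),\nabla S^{\varphi}\in L^2(M,e^{-f})$, whereas the memberships $C^{\varphi},F^{\varphi},\nabla\mbox{Ric}^{\varphi}\in L^2(M,e^{-f})$ that you invoke are obtained in \hyperref[thm caso non compatto con phi cotton nullo]{Theorem \ref*{thm caso non compatto con phi cotton nullo}} as consequences of the hypothesis $C^{\varphi}=0$ (via \eqref{norma F phi quadro con C phi zero}), i.e., of the very conclusion you are trying to reach.
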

	\begin{proof}
		The theorem above has been essentially proved in Chapter 6 of \cite{A} but, since here we need a little modification in one of the assumptions, we briefly recall how the proof works. Chapter 6 of \cite{A} deals with complete gradient Einstein type structures of dimension $m\geq 3$, i.e., complete Riemannian manifolds $(M,g)$ such that
		\begin{equation}\label{einstein type structure gradient}
		\begin{cases}
		\mbox{Ric}^{\varphi}+\mbox{Hess}(f)-\mu df\otimes df=\lambda g\\
		\tau(\varphi)=d\varphi(\nabla f),
		\end{cases}
		\end{equation}
		for some $\alpha\in\mathbb{R}\setminus\{0\}$, $\mu\in\mathbb{R}$, $\lambda,f\in\mathcal{C}^{\infty}(M)$ and $\varphi:M\to N$, where $(N,\langle\,,\,\rangle_N)$ is a Riemannian manifold. We are interested to the more particular situation where $\mu=0$, $\alpha>0$ and $\lambda\in\mathbb{R}$, that is, the situation where $(M,g)$ is a complete gradient harmonic-Ricci soliton of dimension $m\geq 3$.
		
		Notice that, if $f$ is constant then $(M,g)$ is harmonic-Einstein and thus $C^{\varphi}=0$ is trivially satisfied. Otherwise, if $f$ is non-constant, then it is proper. Indeed, in the compact case the statement is trivial, while in the complete non-compact case we rely on the estimates for the potential function, see \hyperref[rmk su properness potential funct]{Remark \ref*{rmk su properness potential funct}}, to obtain its properness.
		
		In Proposition 6.1.10 of \cite{A} we proved
		\begin{equation}\label{first integrability condition with general mu}
		C^{\varphi}_{ijk}+f_tW^{\varphi}_{tijk}=D^{\varphi}_{ijk},
		\end{equation}
		where, in a local orthonormal coframe, the components of $D^{\varphi}$ are given by
		\begin{equation}\label{definition of D phi}
		D^{\varphi}_{ijk}:=\frac{1}{m-2}\left[R^{\varphi}_{ij}f_k-R^{\varphi}_{ik}f_j+\frac{1}{m-1}f_t(R^{\varphi}_{tk}\delta_{ij}-R^{\varphi}_{tj}\delta_{ik})-\frac{S^{\varphi}}{m-1}(f_k\delta_{ij}-f_j\delta_{ik})\right].
		\end{equation}
		The tensor $D^{\varphi}$ has the following geometric meaning, as pointed out in Remark 6.1.4 of \cite{A}
		\begin{equation*}
		\widetilde{C}^{\varphi}=D^{\varphi},
		\end{equation*}
		where $\widetilde{C}^{\varphi}$ is the $\varphi$-Cotton tensor with respect to the conformal metric $\widetilde{g}:=e^{-\frac{2}{m-2}f}g$.
		
		In Proposition 6.1.15 of \cite{A} we proved
		\begin{equation}\label{second integrability condition with general mu}
		\begin{aligned}
		(m-2)B^{\varphi}_{ij}-\frac{m-3}{m-2}C^{\varphi}_{jik}f_k=D^{\varphi}_{ijk,k}-\frac{\alpha}{m-2}\varphi^a_{kk}\varphi^a_if_j.
		\end{aligned}
		\end{equation}
		
		In Proposition 6.2.3 of \cite{A}, assuming
		\begin{equation}\label{cond su bach della tesi}
		B^{\varphi}(\nabla f,\cdot)=0
		\end{equation}
		we proved the validity of 
		\begin{equation}\label{equation relating the divergence of Y and the norm of D phi and the tension of phi}
		\frac{m-2}{2}|D^{\varphi}|^2+\frac{\alpha}{m-2}|\tau(\varphi)|^2|\nabla f|^2=\mbox{div}(Y),
		\end{equation}
		where the components of the vector field $Y$ are given by, in a local orthonormal coframe,
		\begin{equation}\label{definition of Y}
		Y^k:=D^{\varphi}_{ijk}f_if_j.
		\end{equation}
		
		Formula \eqref{equation relating the divergence of Y and the norm of D phi and the tension of phi} is the key point to obtain Theorem 2.9 of \cite{A}, that guarantees that under the assumption \eqref{cond su bach della tesi}, if the potential is proper and non-constant then $D^{\varphi}=0$ and $\tau(\varphi)=0$. Indeed to prove the theorem one integrates \eqref{equation relating the divergence of Y and the norm of D phi and the tension of phi} on the sublevel of the potential function to deduce the validity of
		\begin{equation*}
		\frac{m-2}{2}\int_{M}|D^{\varphi}|^2+\frac{\alpha}{m-2}\int_{M}|\tau(\varphi)|^2|\nabla f|^2=0.
		\end{equation*}
		Finally, by studying the geometry of the level sets of the potential function in Proposition 6.3.29 we obtain that, if $D^{\varphi}=0$ and $\tau(\varphi)=0$, then $C^{\varphi}=0$ on $\{\nabla f\neq 0\}$. Since we are assuming $\lambda\in\mathbb{R}$ the potential function is real analytic in harmonic coordinates, see Remark 6.3.31 in \cite{A}. Then, from Remark 6.3.30, we have $C^{\varphi}=0$ on the whole $M$.
		
		We show that instead of \eqref{cond su bach della tesi} one could assume
		\begin{equation}\label{nuova assunzione su bach}
		B^{\varphi}\geq -\alpha\frac{1-\varepsilon}{(m-2)^2}|\tau(\varphi)|^2g,
		\end{equation}
		for some $\varepsilon>0$ to obtain that $D^{\varphi}$ and $\tau(\varphi)$ vanishes and then the vanishing of $C^{\varphi}$ on the whole $M$. Indeed, from the second integrability condition \eqref{second integrability condition with general mu} we easily get
		\begin{equation*}
		(m-2)B^{\varphi}(\nabla f,\nabla f)=D^{\varphi}_{ijk,k}f_if_j-\frac{\alpha}{m-2}|\tau(\varphi)|^2|\nabla f|^2,
		\end{equation*}
		and using \eqref{nuova assunzione su bach} we obtain
		\begin{equation*}
		D^{\varphi}_{ijk,k}f_if_j-\alpha\varepsilon|\tau(\varphi)|^2|\nabla f|^2\geq 0.
		\end{equation*}
		The above, using the relation $(6.2.8)$ of \cite{A},
		\begin{equation}\label{norm of D phi in terms of phi Ricci}
		|D^{\varphi}|^2=\frac{2}{m-2}D^{\varphi}_{ijk}R^{\varphi}_{ij}f_k,
		\end{equation}
		and the definition \eqref{definition of Y} of $Y$ gives
		\begin{equation*}
		\frac{m-2}{2}|D^{\varphi}|^2+\alpha\varepsilon|\tau(\varphi)|^2|\nabla f|^2\leq \mbox{div}(Y^{\varphi}).
		\end{equation*}
		Integrating the above on the sublevel of the potential function we deduce
		\begin{equation*}
		\frac{m-2}{2}\int_{M}|D^{\varphi}|^2+\alpha\varepsilon\int_{M}|\tau(\varphi)|^2|\nabla f|^2\leq 0,
		\end{equation*}
		Then we conclude exactly as in the previous case.
		
		To conclude the proof it only remains to show that the assumption \eqref{nuova assunzione su bach} is satisfied. To see that observe that the vanishing of the totally traceless part of $\varphi$-Bach implies \eqref{nuova assunzione su bach}, for some $\varepsilon>0$. Notice that, using \eqref{traccia phi bach}, the vanishing of $\mathring{B}^{\varphi}$ is equivalent to
		\begin{equation}
		B^{\varphi}=\alpha\frac{m-4}{m(m-2)^2}|\tau(\varphi)|^2g,
		\end{equation}
		that gives \eqref{nuova assunzione su bach} with the equality sign when choosing $\varepsilon=\frac{2(m-2)}{m}>0$, since $m\geq 3$.
	\end{proof}
	\begin{rmk}\label{rmk totally traceless part}
		Clearly in the Theorem above one could assume $B^{\varphi}=0$ instead of $\mathring{B}^{\varphi}=0$. When $m=4$ the $\varphi$-Bach tensor is traceless, from \eqref{traccia phi bach}, hence the two assumptions are actually the same. The reason why we preferred the latter assumption in the statement of the Theorem above is that, in dimension $m\neq 4$, the vanishing of $\varphi$-Bach implies automatically the harmonicity of $\varphi$ while the vanishing of $\mathring{B}^{\varphi}=0$ does not. Hence the assumption $\mathring{B}^{\varphi}=0$ does not require a priori that $\varphi$ is harmonic, exactly as assumption \eqref{cond su bach della tesi} of Chapter 6 of \cite{A}, and it is a geometric assumption on $M$ that does not involve the potential function $f$.
	\end{rmk}	

	As a consequence of \hyperref[thm che bach flat implica phi cotton flat]{Theorem \ref*{thm che bach flat implica phi cotton flat}} we get the following two Corollaries.
	\begin{cor}\label{cor bach flat harm ricci soliton sono harm einst}
		Let $(M,g)$ be a compact harmonic-Ricci soliton of dimension $m\geq 3$. If the totally traceless part $\mathring{B}^{\varphi}$ of the $\varphi$-Bach tensor vanishes on $M$, then $(M,g)$ is rigid.
	\end{cor}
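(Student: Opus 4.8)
The plan is to derive this corollary as a bookkeeping combination of three results already established, via a simple dichotomy on rigidity. If $(M,g)$ is already rigid there is nothing to prove, so the substance lies in ruling out the non-rigid case, which I intend to do by feeding such a soliton into \hyperref[thm che bach flat implica phi cotton flat]{Theorem \ref*{thm che bach flat implica phi cotton flat}} and then into \hyperref[thm phi cotton flat compact ricci solitons are rigid]{Theorem \ref*{thm phi cotton flat compact ricci solitons are rigid}} to reach a contradiction.

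Concretely, I would proceed in three steps. First, suppose $(M,g)$ is not rigid; then by \hyperref[thm solitone ricci harm compatto non rigido grad shrink]{Theorem \ref*{thm solitone ricci harm compatto non rigido grad shrink}} it is necessarily gradient and shrinking, and since $M$ is compact it is in particular complete, so $(M,g)$ falls into the class of complete gradient shrinking harmonic-Ricci solitons of dimension $m\geq 3$. Second, since by hypothesis $\mathring{B}^{\varphi}=0$ on $M$, I apply \hyperref[thm che bach flat implica phi cotton flat]{Theorem \ref*{thm che bach flat implica phi cotton flat}} to conclude that $(M,g)$ is $\varphi$-Cotton flat, that is $C^{\varphi}=0$. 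Third, I apply \hyperref[thm phi cotton flat compact ricci solitons are rigid]{Theorem \ref*{thm phi cotton flat compact ricci solitons are rigid}} to the compact $\varphi$-Cotton flat soliton $(M,g)$, obtaining that it is rigid, which contradicts the standing assumption. Hence $(M,g)$ was rigid to begin with.

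I do not expect a genuine analytic obstacle here, as no new estimate or computation is required beyond those already carried out. The only point demanding care is that \hyperref[thm che bach flat implica phi cotton flat]{Theorem \ref*{thm che bach flat implica phi cotton flat}} is stated for \emph{gradient shrinking} solitons, whereas the corollary allows an arbitrary, not necessarily gradient, compact harmonic-Ricci soliton; the reduction provided by \hyperref[thm solitone ricci harm compatto non rigido grad shrink]{Theorem \ref*{thm solitone ricci harm compatto non rigido grad shrink}} is exactly what bridges this gap, and it is the reason that reduction had to be established beforehand.
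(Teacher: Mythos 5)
Your proposal is correct and takes essentially the same route as the paper, whose proof consists of exactly the two applications you describe: \hyperref[thm che bach flat implica phi cotton flat]{Theorem \ref*{thm che bach flat implica phi cotton flat}} to get $C^{\varphi}=0$, then \hyperref[thm phi cotton flat compact ricci solitons are rigid]{Theorem \ref*{thm phi cotton flat compact ricci solitons are rigid}} to conclude rigidity. The only difference is that the paper applies the first theorem directly, without comment on the fact that it is stated for \emph{gradient shrinking} solitons, whereas your contradiction argument via \hyperref[thm solitone ricci harm compatto non rigido grad shrink]{Theorem \ref*{thm solitone ricci harm compatto non rigido grad shrink}} makes that reduction explicit (the rigid case being harmless, since harmonic-Einstein manifolds are automatically $\varphi$-Cotton flat), so your version is, if anything, slightly more careful than the paper's own wording.
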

	\begin{proof}
		From \hyperref[thm che bach flat implica phi cotton flat]{Theorem \ref*{thm che bach flat implica phi cotton flat}} we obtain that $(M,g)$ is $\varphi$-Cotton flat. Then the thesis follows from \hyperref[thm phi cotton flat compact ricci solitons are rigid]{Theorem \ref*{thm phi cotton flat compact ricci solitons are rigid}}.
	\end{proof}
	\begin{cor}\label{cor bach non comp flat harm ricci soliton sono harm einst}
		Let $(M,g)$ be a complete non-compact gradient shrinking harmonic-Ricci soliton of dimension $m\geq 3$ with respect to $f\in\mathcal{C}^{\infty}(M)$, $\alpha,\lambda>0$, $\varphi:M\to N$ smooth, where $(N,\langle\,,\,\rangle_N)$ is a Riemannian manifold. If the totally traceless part $\mathring{B}^{\varphi}$ of the $\varphi$-Bach tensor vanishes on $M$, then $(M,g)$ is isometric to a finite quotient of $L\times \mathbb{R}$, where $(L,g_L)$ is a compact harmonic-Einstein manifold with respect to $\alpha$, $\varphi_L:M\to N$ and $\lambda$ and, via the isometry, $\varphi=\varphi_L\circ \pi_L$ and $f=f_{\mathbb{R}}\circ \pi_{\mathbb{R}}$, where $f_{\mathbb{R}}=\frac{\lambda}{2}x^2+bx+c$ for some $b,c\in\mathbb{R}$ and $\pi_L:L\times \mathbb{R}\to L$ and $\pi_{\mathbb{R}}:L\times \mathbb{R}\to \mathbb{R}$ are the canonical projections. Furthermore we have $B^{\varphi}=0$ and $J=0$, where $J$ is defined by \eqref{def di J}.
	\end{cor}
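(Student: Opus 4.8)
The plan is to chain together the results already in place and then do a small amount of extra bookkeeping to pin down the dimension of the Euclidean factor. First I would apply \hyperref[thm che bach flat implica phi cotton flat]{Theorem \ref*{thm che bach flat implica phi cotton flat}} directly: under the hypotheses (complete gradient shrinking, $m\ge 3$, $\mathring{B}^{\varphi}=0$) it yields $\varphi$-Cotton flatness, $C^{\varphi}=0$. With $C^{\varphi}=0$ I would then invoke \hyperref[cor se c phi zero allora rigidita]{Corollary \ref*{cor se c phi zero allora rigidita}}, which already delivers the product description: $(M,g)$ is a finite quotient of $L\times\mathbb{R}^{k}$ for some $1\le k\le m$, with $(L,g_L)$ compact harmonic-Einstein, $\varphi=\varphi_L\circ\pi_L$, and $f=f_{\mathbb{R}^k}\circ\pi_{\mathbb{R}^k}$ with $f_{\mathbb{R}^k}=\frac{\lambda}{2}|x|^{2}+\langle b,x\rangle+c$. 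The whole remaining content of the statement is therefore to force $k=1$ and, once that is done, to read off $B^{\varphi}=0$ and $J=0$.

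The key step is the determination of $k$, and here I would exploit that $B^{\varphi}$ and its totally traceless part $\mathring{B}^{\varphi}$ are local invariants of $(g,\varphi)$. Passing to the Riemannian cover $L\times\mathbb{R}^{k}$ (on which $\bar{\varphi}=\varphi_L\circ\pi_L$), the hypothesis $\mathring{B}^{\varphi}=0$ lifts to $\mathring{\bar{B}}^{\bar{\varphi}}=0$. By \hyperref[prop che descrive modello rigido]{Proposition \ref*{prop che descrive modello rigido}} the map $\bar{\varphi}$ is harmonic, so $\tau(\bar{\varphi})=0$ and, by \eqref{traccia phi bach}, $\bar{B}^{\bar{\varphi}}$ is already trace-free; hence $\mathring{\bar{B}}^{\bar{\varphi}}=\bar{B}^{\bar{\varphi}}$ and I conclude $\bar{B}^{\bar{\varphi}}=0$ on the product. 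Now I would insert the explicit value \eqref{bar phi bach per modello rigid}, which presents $\bar{B}^{\bar{\varphi}}$ as a multiple of $k\,\pi_L^{*}g_L-\ell\,\pi_{\mathbb{R}^k}^{*}g_{\mbox{can}}$ (writing $\ell=\dim L=m-k$) with coefficient proportional to $(k-1)\lambda^{2}$. When $\ell\ge 1$ this tensor is nonzero, since its restriction to $TL$ is $k\,g_L\neq 0$; as $\lambda\neq 0$, the coefficient must vanish, which forces $k=1$.

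The main obstacle I anticipate is the borderline case $\ell=\dim L=0$, which the argument above does not rule out: there $M$ is a flat finite quotient of $\mathbb{R}^{m}$ (the Gaussian shrinking soliton, $L$ a point), for which $\mathring{B}^{\varphi}$ vanishes trivially while $k=m$. This degenerate instance must be set aside separately, exactly as in the parallel classification of \cite{CC}; assuming the non-flat regime $\ell\ge 1$, the previous paragraph gives $k=1$ and the stated description with $f_{\mathbb{R}}=\frac{\lambda}{2}x^{2}+bx+c$ is just \hyperref[cor se c phi zero allora rigidita]{Corollary \ref*{cor se c phi zero allora rigidita}} specialized to $k=1$. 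Finally, the two extra identities are immediate from \hyperref[prop che descrive modello rigido]{Proposition \ref*{prop che descrive modello rigido}} applied to this $L\times\mathbb{R}$ model: at $k=1$ the factor $(k-1)$ in \eqref{bar phi bach per modello rigid} vanishes, so $\bar{B}^{\bar{\varphi}}=0$, while \eqref{bar J per modello rigid} gives $\bar{J}=0$; since both tensors are local they descend to the finite quotient, yielding $B^{\varphi}=0$ and $J=0$ on $M$.
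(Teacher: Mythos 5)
Your proposal is correct and, in its main line, is exactly the paper's own proof: invoke \hyperref[thm che bach flat implica phi cotton flat]{Theorem \ref*{thm che bach flat implica phi cotton flat}} to get $C^{\varphi}=0$, then \hyperref[cor se c phi zero allora rigidita]{Corollary \ref*{cor se c phi zero allora rigidita}} to get the finite quotient of $L\times\mathbb{R}^k$, then use harmonicity of $\varphi$ and \eqref{traccia phi bach} to upgrade $\mathring{B}^{\varphi}=0$ to $B^{\varphi}=0$, and finally read $k=1$ and $J=0$ off the explicit formulas \eqref{bar phi bach per modello rigid} and \eqref{bar J per modello rigid} of \hyperref[prop che descrive modello rigido]{Proposition \ref*{prop che descrive modello rigido}}.

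The one place where you depart from the paper is your caveat about the degenerate case $\ell:=\dim L=0$, and you are right to raise it: this is a gap in the paper's own proof, not in yours. The paper simply asserts that ``since $\lambda>0$, the only chance to have $B^{\varphi}=0$ is that $k=1$'', citing \hyperref[prop che descrive modello rigido]{Proposition \ref*{prop che descrive modello rigido}}; but that proposition (and formula \eqref{bar phi bach per modello rigid}) is established only for a harmonic-Einstein factor of dimension at least $2$, and when $L$ is a point the tensor $k\pi_L^*g_L-\ell\,\pi_{\mathbb{R}^k}^*g_{\mbox{can}}$ vanishes identically, so $B^{\varphi}=0$ is compatible with $k=m$. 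Concretely, the Gaussian shrinking soliton ($\mathbb{R}^m$ flat, $f=\frac{\lambda}{2}|x|^2$, $\varphi$ constant) satisfies every hypothesis of the corollary --- it does arise from the splitting of \hyperref[cor se c phi zero allora rigidita]{Corollary \ref*{cor se c phi zero allora rigidita}} with $L$ a point, as the proof of \hyperref[thm rigidity]{Theorem \ref*{thm rigidity}} shows --- yet it is not a finite quotient of $L\times\mathbb{R}$ with $L$ compact. So the statement, as written, needs either a non-flatness hypothesis or the flat quotient listed as an alternative conclusion (as in the classification of \cite{CC}); your restriction to the regime $\ell\geq 1$ is precisely that fix. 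For completeness: the other borderline case $\ell=1$, which neither formula covers, is vacuous here, since a one-dimensional factor has $\mbox{Ric}=0$, so $\mbox{Ric}^{\varphi_L}=-\alpha\varphi_L^*\langle\,,\,\rangle_N$ is negative semi-definite and cannot equal $\lambda g_L$ with $\lambda>0$.
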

	\begin{proof}
		From \hyperref[thm che bach flat implica phi cotton flat]{Theorem \ref*{thm che bach flat implica phi cotton flat}} we obtain that $(M,g)$ is $\varphi$-Cotton flat. From \hyperref[cor se c phi zero allora rigidita]{Corollary \ref*{cor se c phi zero allora rigidita}} we deduce the isometry with $L\times \mathbb{R}^k$, for some $k$. Now, since $\varphi$ is harmonic, we have $\mathring{B}^{\varphi}=B^{\varphi}$. Notice that, from \hyperref[prop che descrive modello rigido]{Proposition \ref*{prop che descrive modello rigido}}, since $\lambda>0$, the only chance to have $B^{\varphi}=0$ is that $k=1$, and moreover $J=0$. Hence the proof is concluded.
	\end{proof}
	
	In this final Remark we motivate in which sense it is natural to replace the Bach tensor with the $\varphi$-Bach tensor.
	\begin{rmk}\label{rmk bach flat gen rel}
		In \cite{A20} we motivated the study of the pair of equations
		\begin{equation}\label{eq per funzionale}
			B^{\varphi}=0, \quad J=0
		\end{equation}
		on the compact four dimensional smooth manifold $M$, for a Riemannian metric $g$ and the smooth map $\varphi:M\to N$, where $(N,\langle\,,\,\rangle_N)$ is a fixed target Riemannian manifold. The solutions of \eqref{eq per funzionale} are characterized as critical points of the functional
		\begin{equation}\label{functional s 2 con bienergia}
			\mathcal{S}_2(g,\varphi):= \int_MS_2(A^{\varphi}_g)\mu_g-\frac{\alpha}{2}\int_M|\tau_g(\varphi)|^2\mu_g,
		\end{equation}
		where $\tau_g(\varphi)$ denotes the tension field of $\varphi$ evaluated with respect to the metric $g$, $S_2(A^{\varphi}_g)$ denotes the second elementary symmetric polynomial
		in the eigenvalues of the $\varphi$-Schouten tensor $A^{\varphi}_g$ of $(M,g)$ and $\mu_g$ is the Riemannian volume element of $(M,g)$.
		
		The functional \eqref{functional s 2 con bienergia} is the natural extension, in presence of the field $\varphi$, of the functional
		\begin{equation*}
			g\mapsto \int_M|W_g|^2_g\mu_g,
		\end{equation*}
		whose critical points in four dimension are characterized as Bach flat metrics.
		
		\hyperref[cor bach flat harm ricci soliton sono harm einst]{Corollary \ref*{cor bach flat harm ricci soliton sono harm einst}} shows that every $\varphi$-Bach flat compact four dimensional harmonic-Ricci soliton is harmonic-Einstein, i.e, critical point of the functional of normalized total $\varphi$-scalar curvature
		\begin{equation*}
			(g,\varphi)\mapsto \left(\int_M\mu_g\right)^{-\frac{1}{2}}\int_MS^{\varphi}_g\mu_g.
		\end{equation*}
		
		One may consider the equations \eqref{eq per funzionale} for a complete non-compact four dimensional manifolds. \hyperref[cor bach non comp flat harm ricci soliton sono harm einst]{Corollary \ref*{cor bach non comp flat harm ricci soliton sono harm einst}} shows that $\varphi$-Bach flat four dimensional complete non-compact gradient shrinking harmonic-Ricci solitons are isometric to a finite quotient of the Riemannian product of a three dimensional compact harmonic-Einstein manifold with the Gaussian shrinking soliton on $\mathbb{R}$. Furthermore, the validity of $B^{\varphi}=0$ implies automatically that $J=0$, exactly as in case $\varphi$ is a submersion a.e., see \cite{A20}.
	\end{rmk}

\end{document}